\newcommand{\email}[1]{\href{mailto:#1}{#1}}
\numberwithin{equation}{section}
\newtheorem{theorem}{Theorem}
\newtheorem{proposition}[theorem]{Proposition}
\newtheorem{lemma}[theorem]{Lemma}
\theoremstyle{remark}
\newtheorem{remark}[theorem]{Remark}
\theoremstyle{definition}
\newcommand{\st}{\,:\,}
\newcommand{\Real}{\mathbb{R}}
\newcommand{\Natural}{\mathbb{N}}
\DeclareRobustCommand{\bvec}[1]{\boldsymbol{#1}}
  \renewcommand{\bvec}[1]{#1}%
\newcommand{\uvec}[1]{\underline{\bvec{#1}}}
\newcommand{\cvec}[1]{\bvec{\mathcal{#1}}}
\DeclareMathOperator{\GRAD}{\bf grad}
\DeclareMathOperator{\CURL}{\bf curl}
\DeclareMathOperator{\DIV}{div}
\DeclareMathOperator{\VROT}{\bf rot}
\newcommand{\compl}{{\rm c}}
\newcommand{\Hcurl}[1]{\bvec{H}(\CURL;#1)}
\newcommand{\Hcurlz}[1]{\bvec{H}_0(\CURL;#1)}
\newcommand{\Hdiv}[1]{\bvec{H}(\DIV;#1)}
\newcommand{\Xgrad}[2]{\underline{X}_{\GRAD,#2}^{#1}}
\newcommand{\Xgrads}[2]{\underline{X}_{\GRAD,#2,a}^{#1}}
\newcommand{\Xgradz}[2]{\underline{X}_{\GRAD,#2,0}^{#1}}
\newcommand{\Xcurl}[2]{\underline{\bvec{X}}_{\CURL,#2}^{#1}}
\newcommand{\Xcurlz}[2]{\underline{\bvec{X}}_{\CURL,#2,0}^{#1}}
\newcommand{\Xdiv}[2]{\underline{\bvec{X}}_{\DIV,#2}^{#1}}
\newcommand{\Xbullet}[2]{\underline{X}_{\bullet,#2}^{#1}}
\newcommand{\Igrad}[2]{\underline{I}_{\GRAD,#2}^{#1}}
\newcommand{\Icurl}[2]{\uvec{I}_{\CURL,#2}^{#1}}
\newcommand{\Idiv}[2]{\uvec{I}_{\DIV,#2}^{#1}}
\newcommand{\Ibullet}[2]{\underline{I}_{\bullet,#2}^{#1}}
\newcommand{\symbolproj}{\pi}
\newcommand{\proj}[2]{\symbolproj_{#2}^{#1}}
\newcommand{\lproj}[2]{\symbolproj_{\mathcal{P},#2}^{#1}}
\newcommand{\vlproj}[2]{\boldsymbol{\symbolproj}_{\cvec{P},#2}^{#1}}
\newcommand{\Rproj}[2]{\bvec{\symbolproj}_{\cvec{R},#2}^{#1}}
\newcommand{\Rcproj}[2]{\bvec{\symbolproj}_{\cvec{R},#2}^{\compl,#1}}
\newcommand{\Gproj}[2]{\bvec{\symbolproj}_{\cvec{G},#2}^{#1}}
\newcommand{\Gcproj}[2]{\bvec{\symbolproj}_{\cvec{G},#2}^{\compl,#1}}
\newcommand{\uGT}[1]{\uvec{G}_T^{#1}}
\newcommand{\uCT}[1]{\uvec{C}_T^{#1}}
\newcommand{\uGh}[1]{\uvec{G}_h^{#1}}
\newcommand{\uCh}[1]{\uvec{C}_h^{#1}}
\newcommand{\Dh}[1]{D_h^{#1}}
\newcommand{\cGF}[1]{\boldsymbol{\mathsf{G}}_F^{#1}}
\newcommand{\cGT}[1]{\boldsymbol{\mathsf{G}}_T^{#1}}
\newcommand{\CF}[1]{C_F^{#1}}
\newcommand{\cCT}[1]{\boldsymbol{\mathsf{C}}_T^{#1}} 
\newcommand{\cCh}[1]{\boldsymbol{\mathsf{C}}_h^{#1}} 
\newcommand{\DT}{D_T^k}
\newcommand{\trE}{\gamma_E^{k+1}}
\newcommand{\trF}{\gamma_F^{k+1}}
\newcommand{\trFt}{\bvec{\gamma}_{{\rm t},F}^k}
\newcommand{\faces}[1]{\mathcal{F}_{#1}}
\newcommand{\edges}[1]{\mathcal{E}_{#1}}
\newcommand{\vertices}[1]{\mathcal{V}_{#1}}
\newcommand{\FT}{\faces{T}}
\newcommand{\ET}{\edges{T}}
\newcommand{\EF}{\edges{F}}
\newcommand{\VT}{\vertices{T}}
\newcommand{\VE}{\vertices{E}}
\newcommand{\normal}{\bvec{n}}
\newcommand{\tangent}{\bvec{t}}
\newcommand{\Poly}[2][]{\mathcal{P}_{#1}^{#2}}
\newcommand{\vPoly}[2][]{\cvec{P}_{#1}^{#2}}
\newcommand{\Roly}[1]{\cvec{R}^{#1}}
\newcommand{\Goly}[1]{\cvec{G}^{#1}}
\newcommand{\cRoly}[1]{\cvec{R}^{\compl,#1}}
\newcommand{\cGoly}[1]{\cvec{G}^{\compl,#1}}
\newcommand{\Koly}[2]{\mathcal{K}^{#1}_{#2}}
\newcommand{\norm}[2]{\|#2\|_{#1}}
\newcommand{\seminorm}[2]{|#2|_{#1}}
\newcommand{\vvvert}{\vert\kern-0.25ex\vert\kern-0.25ex\vert}
\newcommand{\tnorm}[2]{\vvvert #2\vvvert_{#1}}
\DeclareMathOperator{\Image}{Im}
\newcommand{\Mh}[1][h]{\mathcal{M}_{#1}}
\newcommand{\Th}[1][h]{\mathcal{T}_{#1}}
\newcommand{\Fh}[1][h]{\mathcal{F}_{#1}}
\newcommand{\Eh}[1][h]{\mathcal{E}_{#1}}
\newcommand{\Vh}{\mathcal{V}_h}
\newcommand{\Fhb}[1][h]{\mathcal{F}_{#1}^b}
\newcommand{\Ehb}[1][h]{\mathcal{E}_{#1}^b}
\newcommand{\Vhb}{\mathcal{V}_h^b}
\newcommand{\Pgrad}[2]{P_{\GRAD,#2}^{#1}}
\newcommand{\Pcurl}[2]{\bvec{P}_{\CURL,#2}^{#1}}
\newcommand{\Pdiv}[2]{\bvec{P}_{\DIV,#2}^{#1}}
\newcommand{\Pbullet}[2]{P_{\bullet,#2}^{#1}}
\newcommand{\SG}[1]{\bvec{S}_{\GRAD,#1}^k}
\newcommand{\SC}[1]{\bvec{S}_{\CURL,#1}^k}
\newcommand{\Csob}{C_{\rm S}}
\newcommand{\Csobh}{C_{{\rm S},h}}
\newcommand{\Cpoin}{C_{{\rm p},\CURL}}
\newcommand{\CPcurl}{C_{{\rm c},\CURL}}
\newcommand{\CPdiv}{C_{{\rm c},\DIV}}
\newcommand{\ed}{{\rm d}}
\newcommand{\ded}[2]{\ed^{#1}_{#2}}
\newcommand{\ued}[2]{\underline{\ed}^{#1}_{#2}}
\newcommand{\Xsp}[2]{\underline{X}^{#1}_{#2}}
\newcommand{\Lpdf}[3]{L^{#1}\Lambda^{#2}(#3)}
\newcommand{\Wpdf}[3]{W^{#1}\Lambda^{#2}(#3)}
\newcommand{\Pkdf}[3]{\Poly{#1}\Lambda^{#2}(#3)}
\newcommand{\Pec}[2]{P^{#1}_{#2}}
\newcommand{\Iec}[2]{\underline{I}^{#1}_{#2}}
\newcommand{\tr}{\mathrm{tr}}
\newcommand{\logLogSlopeTriangle}[5]
{
    \pgfplotsextra
    {
        \pgfkeysgetvalue{/pgfplots/xmin}{\xmin}
        \pgfkeysgetvalue{/pgfplots/xmax}{\xmax}
        \pgfkeysgetvalue{/pgfplots/ymin}{\ymin}
        \pgfkeysgetvalue{/pgfplots/ymax}{\ymax}

        \pgfmathsetmacro{\xArel}{#1}
        \pgfmathsetmacro{\yArel}{#3}
        \pgfmathsetmacro{\xBrel}{#1-#2}
        \pgfmathsetmacro{\yBrel}{\yArel}
        \pgfmathsetmacro{\xCrel}{\xArel}

        \pgfmathsetmacro{\lnxB}{\xmin*(1-(#1-#2))+\xmax*(#1-#2)} 
        \pgfmathsetmacro{\lnxA}{\xmin*(1-#1)+\xmax*#1} 
        \pgfmathsetmacro{\lnyA}{\ymin*(1-#3)+\ymax*#3} 
        \pgfmathsetmacro{\lnyC}{\lnyA+#4*(\lnxA-\lnxB)}
        \pgfmathsetmacro{\yCrel}{\lnyC-\ymin)/(\ymax-\ymin)}

        \coordinate (A) at (rel axis cs:\xArel,\yArel);
        \coordinate (B) at (rel axis cs:\xBrel,\yBrel);
        \coordinate (C) at (rel axis cs:\xCrel,\yCrel);

        \draw[#5]   (A)-- node[pos=0.5,anchor=north] {\scriptsize{1}}
                    (B)-- 
                    (C)-- node[pos=0.,anchor=west] {\scriptsize{#4}} 
                    cycle;
    }
}
\begin{document}

\title{A pressure-robust Discrete de Rham scheme for the Navier--Stokes equations}

\author[1]{Daniele A. Di Pietro}
\author[1,2]{J\'er\^ome Droniou}
\author[2]{Jia Jia Qian}
\affil[1]{IMAG, Univ Montpellier, CNRS, Montpellier, France, \email{daniele.di-pietro@umontpellier.fr}}
\affil[2]{School of Mathematics, Monash University, Melbourne, Australia, \email{jerome.droniou@umontpellier.fr} \email{jerome.droniou@monash.edu}, \email{jia.qian@monash.edu}}

\maketitle

\begin{abstract}
  In this work we design and analyse a Discrete de Rham (DDR) method for the incompressible Navier--Stokes equations.
    Our focus is, more specifically, on the SDDR variant, where a reduction in the number of unknowns is obtained using serendipity techniques.
  The main features of the DDR approach are the support of general meshes and arbitrary approximation orders.
  The method we develop is based on the curl-curl formulation of the momentum equation and, through compatibility with the Helmholtz--Hodge decomposition, delivers pressure-robust error estimates for the velocity.
  It also enables non-standard boundary conditions, such as imposing the value of the pressure on the boundary.
  In-depth numerical validation on a complete panel of tests including general polyhedral meshes is provided.
  The paper also contains an appendix where bounds on DDR potential reconstructions and differential operators are proved in the more general framework of Polytopal Exterior Calculus. \medskip\\
  \textbf{Key words.}
  Incompressible Navier--Stokes problem, %
  pressure-robustness, %
  discrete de Rham method, %
  compatible discretisations, %
  polyhedral methods %
  \medskip\\  
  \textbf{MSC2020.} 65N12, 
  65N30, 
  14F40, 
  76D05  
\end{abstract}



\section{Introduction}

The construction and analysis of accurate numerical approximations of the incompressible Navier--Stokes equations remains an open problem.
Classical issues in this context are related to the identification of inf-sup stable couples of velocity and pressure spaces (see, e.g.,  \cite{Girault.Raviart:86}) and to the robust handling of convection-dominated regimes.
More subtle problems, however, can arise.
It has been recently pointed out in \cite{Linke:14} that classical methods for the Navier--Stokes equations or linearised versions thereof may lack pressure-robustness, i.e., the ability to deliver accurate velocity approximations in the presence of large irrotational body forces.
This issue is tightly related to the non-compliance of these methods with the Helmholtz--Hodge decomposition of the body force term.

In the context of classical Finite Element methods on standard meshes, pressure-robustness can be obtained using $\bvec{H}(\DIV)$-conforming spaces for the velocity \cite{Falk.Neilan:13,Zhang:16} or taking projections thereon \cite{Linke.Merdon:16,Di-Pietro.Ern.ea:16*1}.
The latter strategy can be applied to polyhedral methods such as the Virtual Element or Hybrid High-Order methods through projections on $\bvec{H}(\DIV)$-conforming spaces constructed starting from a matching simplicial (sub)mesh \cite{Frerichs.Merdon:20,Castanon-Quiroz.Di-Pietro:20,Castanon-Quiroz.Di-Pietro:23}.
Working on a submesh, however, can be computationally expensive, particularly in three space dimensions, as numerical integration has to be performed in each tetrahedron.
An altogether different strategy has been recently proposed in \cite{Beirao-da-Veiga.Dassi.ea:22}, where discrete versions of the de Rham complex are used to devise compatible ($\bvec{H}(\CURL)$,$H^1$)-like space couples for the Stokes problem.
Pressure robustness in this context results from the compatibility of the $\bvec{H}(\CURL)$-like interpolate of the body force with its Helmholtz--Hodge decomposition.

Another advantage of using a formulation of the Stokes (or Navier--Stokes) problem based on the $\bvec{H}(\CURL)$ space for the velocity and the $H^1$ space for the pressure is that it allows for a seamless handling of non-standard boundary conditions \cite{Girault:90}. In the standard weak form of these equations, based on the $\bvec{H}^1$ space for the velocity and $L^2$ space for the pressure, only components of the velocity and/or the normal component of its gradient can be enforced on the boundary. However, imposing the pressure on the boundary can be quite relevant in certain applications such as, e.g., blood flows \cite{Muha.Canic:13}. To enforce such boundary conditions using ($\bvec{H}^1$,$L^2$) space couples, Lagrange multipliers must be used as in \cite{Bertoluzza.Chabannes.ea:17} for the Stokes equations, but only result in weak enforcement of boundary pressure values. On the contrary, a weak formulation based on ($\bvec{H}(\CURL)$,$H^1$) space couples enables to strongly impose the normal velocity and tangential vorticity or the tangential velocity and pressure on the boundary.

The goal of the present work is to extend the Discrete de Rham (DDR) method of \cite{Beirao-da-Veiga.Dassi.ea:22}, based on a discrete counterpart of the ($\bvec{H}(\CURL)$,$H^1$) space couple, to the full Navier--Stokes equations.
  Our focus will be, more specifically, on the SDDR variant of \cite{Di-Pietro.Droniou:23*1}, where the dimension of the discrete spaces is reduced using serendipity techniques.
The proposed scheme hinges on a naturally non-dissipative convective term designed from the discrete curl and corresponding potential.
With this choice, the aforementioned non-standard boundary conditions can be strongly enforced and, provided a uniform discrete Sobolev inequality for the curl holds, one can obtain pressure-robust and optimally convergent error estimates for the velocity as well as for a discrete $W^{1,\frac43}$-like norm of the pressure. Notice that, while the validity of Sobolev-type inequalities is a consequence of the cohomology properties of the SDDR complex, proving that the corresponding constants are independent of the meshsize is, to date, an open problem.

Instrumental to our analysis are bounds on potential reconstructions and discrete differential operators that we prove here in the more general framework of Polytopal Exterior Calculus \cite{Bonaldi.Di-Pietro.ea:23}.

The rest of this work is organised as follows.
The continuous setting is described in Section \ref{sec:continuous.setting}.
The new SDDR scheme as well as the main theoretical results are described in Section \ref{sec:scheme.results}, while the details of the analysis are postponed to Section \ref{sec:analysis}.
In Section \ref{sec:essential.BC} we briefly discuss the enforcement of essential boundary conditions.
Section \ref{sec:num.tests} contains an extensive panel of numerical tests.
Finally, the proofs of relevant bounds on potential reconstructions and discrete differential operators are provided in Appendix \ref{sec:bounds.pec}.


\section{Continuous setting}\label{sec:continuous.setting}

We consider the Navier--Stokes equations on a convex polyhedral domain $\Omega\subset\Real^3$ with trivial topology:
\begin{equation}\label{eq:strong.Delta}
  \begin{aligned}
    &\text{Find the velocity $\bvec{u}:\Omega\to\Real^3$ and the pressure $\varpi:\Omega\to\Real$ such that}
    \\
    &-\nu\Delta\bvec{u} + \DIV(\bvec{u}\otimes\bvec{u})+\GRAD \varpi = \bvec{f}\quad\text{in $\Omega$},
    \\
    &\DIV\bvec{u} = 0 \quad\text{in $\Omega$},
  \end{aligned}
\end{equation}
where $\bvec{f}:\Omega\to\Real^3$ represents the volumetric force and the real number $\nu>0$ is the viscosity of the fluid.
The pressure-robust scheme we design relies on discrete counterparts of the $\Hcurl{\Omega}$ and $H^1(\Omega)$ spaces, which are adapted to the Hodge decomposition of $\bvec{f}$. We will therefore consider the reformulation of \eqref{eq:strong.Delta} based on the curl operator, obtained applying the following identities (the second one follows from the Lamb identity):
\[
-\Delta \bvec{u}=\CURL\CURL \bvec{u} - \GRAD\DIV\bvec{u}\,,\quad \DIV(\bvec{u}\otimes\bvec{u})= (\DIV \bvec{u})\bvec{u}+\CURL \bvec{u}\times \bvec{u}+\frac12 \GRAD(\bvec{u}\cdot\bvec{u}).
\]
Denoting by $p \coloneq \varpi+\frac12 \bvec{u}\cdot\bvec{u}$ the Bernoulli pressure and taking into account the incompressibility condition $\DIV\bvec{u}= 0$ in the formulas above, the Navier--Stokes equations therefore become:
\begin{subequations}\label{eq:momentum.incompressibility}
\begin{align}
    &\text{Find the velocity $\bvec{u}:\Omega\to\Real^3$ and the pressure $p:\Omega\to\Real$ such that}
    \nonumber\\
    &\nu\CURL\CURL\bvec{u} + \CURL \bvec{u} \times \bvec{u}+\GRAD p = \bvec{f}\quad\text{in $\Omega$},
    \label{eq:momentum}\\
    &\DIV\bvec{u} = 0 \quad\text{in $\Omega$}.
    \label{eq:incompressibility}
\end{align}
\end{subequations}
Boundary conditions are needed to close the problem.
Besides providing the means to design a pressure-robust scheme for the Navier--Stokes equations, the formulation \eqref{eq:momentum.incompressibility} also allows us to consider, as in \cite{Girault:90}, non-standard boundary conditions, enforcing either the tangential vorticity and normal component of the velocity (for natural boundary conditions), or the tangential velocity and the pressure (for essential boundary conditions).
In most of this paper, we consider, for the sake of simplicity, homogeneous natural boundary conditions:
\begin{equation}\label{eq:natural.BCs}
  \CURL\bvec{u}\times\normal = \bvec{0} \quad\text{ and }\quad\bvec{u}\cdot\normal = 0 \quad\text{on $\partial\Omega$}
\end{equation}
and briefly discuss the case of essential boundary conditions in Section \ref{sec:essential.BC}.
The extension to the non-homogenous case is straightforward and is considered numerically in Section \ref{sec:num.tests}.
Natural boundary conditions lead to a model in which the pressure is only defined up to an additive constant, which we fix by imposing
\begin{equation*}
\int_\Omega p=0.
\end{equation*}

The weak formulation of this model is obtained taking the dot product of the momentum equation by a test function $\bvec{v}$, multiplying the continuity equation by a test function $q$, integrating by parts the viscous term, and fixing the Sobolev spaces for the trial and test functions to ensure that all differential quantities are well defined:
\begin{subequations}\label{eq:weak}
  \begin{align}
    &\text{Find $(\bvec{u},p)\in \Hcurl{\Omega}\times H^1(\Omega)$ such that, for all $(\bvec{v},q)\in \Hcurl{\Omega}\times H^1(\Omega)$,}\nonumber
    \\
    &\nu\int_\Omega\CURL\bvec{u}\cdot\CURL\bvec{v} + \int_\Omega [\CURL \bvec{u}\times \bvec{u}]\cdot\bvec{v}+\int_\Omega\GRAD p\cdot\bvec{v} = \int_\Omega\bvec{f}\cdot\bvec{v},
    \\ \label{eq:weak:mass}
    &-\int_\Omega\bvec{u}\cdot\GRAD q = 0,
    \\
    &\int_\Omega p = 0.
  \end{align}
\end{subequations}
In this form, the nonlinear term is naturally non-dissipative since $[\CURL \bvec{u}\times \bvec{u}]\cdot\bvec{u}=0$ by orthogonality of the cross product.
Notice that \eqref{eq:weak:mass} tested with $q \in H_0^1(\Omega)$ ensures that $\DIV\bvec{u} = 0$ so that, in particular, $\bvec{u}\in \Hdiv{\Omega}$.
  Using this fact in \eqref{eq:weak:mass} tested with a generic $q \in H^1(\Omega)$ and recalling the surjectivity of the trace operator, we additionally get $\bvec{u}\cdot\normal=0$ on $\partial\Omega$.
Combined with the fact that $\bvec{u}\in\Hcurl{\Omega}$, and since $\Omega$ is convex, the above conditions ensure, in turn, that $\bvec{u}\in \bvec{H}^1(\Omega)$ \cite{Girault:90,Amrouche.Bernardi.ea:98} and thus, by the Sobolev embedding, that the trilinear term is well-defined.


\section{Numerical scheme and main result}\label{sec:scheme.results}

\subsection{Mesh and notation}\label{sec:mesh}

Given a (measurable) set $Y\subset\Real^3$, we denote by $h_Y$ its diameter.
We consider meshes $\Mh$ defined as the union of the following sets:
$\Th$, a finite collection of open disjoint polyhedral elements such that $\overline{\Omega} = \bigcup_{T\in\Th}\overline{T}$ and $h=\max_{T\in\Th}h_T>0$;
$\Fh$, a finite collection of open planar polygonal faces;
$\Eh$, a finite collection of open straight edges;
$\Vh$, the set collecting the edge endpoints.
  We assume that $(\Th,\Fh)$ matches the conditions in \cite[Definition 1.4]{Di-Pietro.Droniou:20},
  which stipulate, in particular, that each face is contained in the boundary of some element, and that the boundary of each element is equal to the union of a subset of (closures of) faces. The same relations are assumed between edges and faces (e.g.~the boundary of each face is the union of closures of edges) and between vertices and edges. This definition is very generic and, in particular, allows for situations where a flat piece of a boundary of an element is cut into several mesh faces, which typically occurs when local mesh refinement is performed.

The set collecting the mesh faces that lie on the boundary of a mesh element $T\in\Th$ is denoted by $\FT$.
For any $Y\in\Th\cup\Fh$, we denote by $\edges{Y}$ the set of edges of $Y$.
Similarly, for all $Y\in\Th\cup\Fh\cup\Eh$, $\vertices{Y}$ denotes the set of vertices of $Y$.

For any face $F\in\Fh$, we fix a unit normal vector $\normal_F$ and, for any edge $E\in\Eh$, a unit tangent vector $\tangent_E$. For any $F \in \FT$, we let $\omega_{TF} \in \{-1, 1\}$ be such that $\omega_{TF} \normal_F$ points out of $T$.
If $F\in\Fh$ and $E\in\EF$, we denote by $\normal_{FE}$ the vector normal to $E$ in the plane containing $F$ oriented such that $(\tangent_E,\normal_{FE},\normal_F)$ forms a right-handed system of coordinates, and we set $\omega_{FE} \in \{-1, 1\}$ such that $\omega_{FE} \normal_{FE}$ points out of $F$.

For each mesh element or face $Y\in\Th\cup\Fh$, we fix a point~$\bvec{x}_Y\in Y$ such that there exists a ball centered in $\bvec{x}_Y$ contained in $Y$ and of diameter comparable to $h_Y$ uniformly in $h$ (when $\Mh$ belongs to a regular mesh sequence in the sense of \cite[Definition 1.9]{Di-Pietro.Droniou:20}).

Throughout the paper, $a\lesssim b$ stands for $a\le Cb$ with $C$ depending only on $\Omega$, the mesh regularity parameter and, when polynomial functions are involved, the corresponding polynomial degree. The notation $a\simeq b$ is a shorthand for ``$a\lesssim b$ and $b\lesssim a$''.

\subsection{Polynomial spaces}
For any integer $l\geq -1$ and any $Y\in\Mh$, we denote by $\Poly{l}(Y)$ the space of polynomial functions of total degree $\leq l$ on $Y$, with the convention that $\Poly{-1}(Y)\coloneq\left\{0\right\}$.
We use boldface to indicate vector-valued polynomial spaces.
Specifically, we set $\vPoly{l}(F)\coloneq\Poly{l}(F)^2$ for any face $F\in\Fh$ and $\vPoly{l}(T)\coloneq\Poly{l}(T)^3$ for any element $T\in\Th$.
The $L^2$-orthogonal projectors on these (full) scalar and vector-valued polynomial spaces are $\lproj{l}{Y}$ and $\vlproj{l}{Y}$ respectively.
For any face $F\in\Fh$, we define the following polynomial subspaces of $\vPoly{l}(F)$:
\begin{subequations}\label{def:GRoly.F}
  \begin{align}
    \Goly{l}(F) & \coloneq \GRAD_F\Poly{l+1}(F), &  \cGoly{l}(F) & \coloneq (\bvec{x}-\bvec{x}_F)^\perp\Poly{l-1}(F), \\
    \Roly{l}(F) & \coloneq  \VROT_F\Poly{l+1}(F), & \cRoly{l}(F) & \coloneq (\bvec{x}-\bvec{x}_F)\Poly{l-1}(F),
  \end{align}
\end{subequations}
where $\bvec{y}^\perp$ is obtained rotating the vector $\bvec{y}$ tangentially to $F$ by an angle of $-\frac{\pi}{2}$ oriented by $\normal_F$, and $\GRAD_F$ and $\VROT_F$ respectively denote the tangential gradient and rotor (rotation of the gradient $\VROT_F f=(\GRAD_F f)^\perp$).
The following direct decompositions hold:
\begin{equation*}
  \vPoly{l}(F)
  = \Goly{l}(F)\oplus\cGoly{l}(F) = \Roly{l}(F)\oplus\cRoly{l}(F).
\end{equation*}
Likewise, on any mesh element  $T\in\Th$, we define the following subspaces of $\vPoly{l}(T)$:
\begin{subequations}\label{def:GRoly.T}
  \begin{align}
    \Goly{l}(T) & \coloneq \GRAD\Poly{l+1}(T), & \cGoly{l}(T) & \coloneq (\bvec{x}-\bvec{x}_T)\times\vPoly{l-1}(T), \\
    \Roly{l}(T) & \coloneq \CURL\vPoly{l+1}(T), & \cRoly{l}(T) & \coloneq (\bvec{x}-\bvec{x}_T) \Poly{l-1}(T),
  \end{align}
\end{subequations}
which decompose the polynomial space $\vPoly{l}(T)$ as
\begin{equation*}
  \vPoly{l}(T)
  = \Goly{l}(T)\oplus\cGoly{l}(T) = \Roly{l}(T)\oplus\cRoly{l}(T).
\end{equation*}
For any $\cvec{X} \in \{ \cvec{R}, \cvec{G} \}$ and any $Y \in \Th \cup \Fh$, the $L^2$-orthogonal projectors on $\cvec{X}^l(Y)$ and $\cvec{X}^{\compl,l}(Y)$ are, respectively, $\bvec{\symbolproj}_{\cvec{X},Y}^{l}$ and $\bvec{\symbolproj}_{\cvec{X},Y}^{\compl,l}$.

\subsection{Serendipity Discrete de Rham complex}

In what follows we briefly present the main elements of the SDDR construction, namely the discrete spaces and the discrete counterparts of the differential operators gradient, curl, and divergence, and of the corresponding (scalar or vector) potentials. For a detailed presentation, we refer the reader to \cite{Di-Pietro.Droniou:23*1}.

  For the notion of continuous vector potentials for the curl and divergence operators, see, e.g., \cite{Amrouche.Bernardi.ea:98}.
  Discrete potentials can be regarded as inspired by this notion in the sense that they play the role of a potential in discrete integration by parts formulas.%

\subsubsection{SDDR spaces and serendipity operators}

Throughout the rest of the paper, we fix an integer $k\ge 0$ corresponding to the polynomial degree of the complex.
For each mesh face (resp., element) $Y$, we select a number $\eta_Y\ge 2$ of edges (resp., faces) on the boundary of $Y$ such that $Y$ lies entirely on one side of the affine hyperspace spanned by each of the selected edges (resp.~faces). We then set
\[
\ell_Y \coloneq k + 1 - \eta_Y\qquad\forall Y\in\Th\cup\Fh.
\]

The SDDR space $\Xgrad{k}{h}$, which replaces $H^1(\Omega)$ at the discrete level, and its interpolator $\Igrad{k}{h}:C^0(\overline{\Omega})\to\Xgrad{k}{h}$ are, respectively,
\begin{align*}
  \Xgrad{k}{h}
  &\coloneq
  \Big\{
  \begin{aligned}[t]
    \underline{q}_h
    &= ((q_T)_{T\in\Th},(q_F)_{F\in\Fh},(q_E)_{E\in\Eh},(q_V)_{V\in\Vh})\st
    \\
    & \text{$q_T\in\Poly{\ell_T}(T)$ for all $T\in\Th$, $q_F\in\Poly{\ell_F}(F)$ for all $F\in\Fh$},\\
    & \text{$q_E\in\Poly{k-1}(E)$ for all $E\in\Eh$, and $q_V\in\Real$ for all $V\in\Vh$}
    \Big\},
  \end{aligned}\\
  \Igrad{k}{h}q
  &\coloneq
  ((\lproj{\ell_T}{T}q)_{T\in\Th},(\lproj{\ell_F}{F}q)_{F\in\Fh},(\lproj{k-1}{E}q)_{E\in\Eh},(q(\bvec{x}_V))_{V\in\Vh})\quad\forall q\in C^0(\overline{\Omega}).
\end{align*}

The SDDR discrete $\Hcurl{\Omega}$ space and its interpolator $\Icurl{k}{h}:\bvec{C}^0(\overline{\Omega})\to \Xcurl{k}{h}$ are
\begin{align*}
  \Xcurl{k}{h}
  &\coloneq
  \Big\{
  \begin{aligned}[t]
    \uvec{v}_h
    &=((\bvec{v}_{\cvec{R},T},\bvec{v}_{\cvec{R},T}^\compl)_{T\in\Th},(\bvec{v}_{\cvec{R},F},\bvec{v}_{\cvec{R},F}^\compl)_{F\in\Fh},(v_E)_{E\in\Eh})
    \st
    \\
    & \text{$\bvec{v}_{\cvec{R},T}\in\Roly{k-1}(T)$ and $\bvec{v}_{\cvec{R},T}^\compl\in\cRoly{\ell_T+1}(T)$ for all $T\in\Th$,}\\
    & \text{$\bvec{v}_{\cvec{R},F}\in\Roly{k-1}(F)$ and $\bvec{v}_{\cvec{R},F}^\compl\in\cRoly{\ell_F+1}(F)$ for all $F\in\Fh$,}\\
    & \text{and $v_E\in\Poly{k}(E)$ for all $E\in\Eh$}      
    \Big\}, 
  \end{aligned}\\
  \Icurl{k}{h}\bvec{v}
  &\coloneq
  ( (\Rproj{k-1}{T}\bvec{v},\Rcproj{\ell_T+1}{T}\bvec{v})_{T\in\Th},(\Rproj{k-1}{F}\bvec{v}_{{\rm t},F},\Rcproj{\ell_F+1}{F}\bvec{v}_{{\rm t},F})_{F\in\Fh},(\lproj{k}{E}(\bvec{v}\cdot\tangent_E))_{E\in\Eh})\quad\forall\bvec{v}\in
  \bvec{C}^0(\overline{\Omega}),
\end{align*}
where $\bvec{v}_{{\rm t},F}$ denotes the tangential component of $\bvec{v}$ on $F$.

Finally, the discrete $\Hdiv{\Omega}$ space and its interpolator $\Idiv{k}{h}:\bvec{C}^0(\overline{\Omega})\to\Xdiv{k}{h}$ are
\begin{align*}
  \Xdiv{k}{h}
  &\coloneq
  \Big\{
  \begin{aligned}[t]
    \uvec{w}_h&=((\bvec{w}_{\cvec{G},T},\bvec{w}_{\cvec{G},T}^\compl)_{T\in\Th},(w_F)_{F\in\Fh})
    \st
    \\
    & \text{$\bvec{w}_{\cvec{G},T}\in\Goly{k-1}(T)$ and $\bvec{w}_{\cvec{G},T}^\compl\in\cGoly{k}(T)$ for all $T\in\Th$,}\\
    & \text{and $v_F\in\Poly{k}(F)$ for all $F\in\Fh$}      
    \Big\},
  \end{aligned}\\
  \Idiv{k}{h}\bvec{w}
  &\coloneq
  ( (\Gproj{k-1}{T}\bvec{w},\Gcproj{k}{T}\bvec{w})_{T\in\Th},(\lproj{k}{F}(\bvec{w}\cdot\normal_E))_{E\in\Eh})\quad\forall\bvec{w}\in \bvec{C}^0(\overline{\Omega}).
\end{align*}

The restriction of the above spaces and their element to a mesh entity $Y\in\Eh\cup\Fh\cup\Th$ are denoted by replacing the subscript $h$ by $Y$.
For example, given a mesh element $T \in \Th$, $\Xgrad{k}{T}$ denotes the restriction of $\Xgrad{k}{h}$ to $T$ and $\underline{q}_T\in\Xgrad{k}{T}$ is $\underline{q}_T=(q_T,(q_F)_{F\in\FT},(q_E)_{E\in\ET},(q_V)_{V\in\VT})$.

SDDR spaces differ from the DDR spaces introduced in \cite{Di-Pietro.Droniou:23} when $\ell_Y < k - 1$ for some $Y \in \Th \cup \Fh$.
The same degree of polynomial consistency is preserved through the following serendipity operators, that reconstruct, respectively, consistent gradient and vector potentials:
For all $Y\in\Th\cup\Fh$,
\[
\begin{gathered}
  \text{%
    $\SG{Y} : \Xgrad{k}{Y} \to \vPoly{k}(Y)$
    such that $\SG{Y}\Igrad{k}{Y}q = \GRAD_Y q$ for all $q \in \Poly{k+1}(Y)$,
  }
  \\
  \text{%
    $\SC{Y} : \Xcurl{k}{Y} \to \vPoly{k}(Y)$
    such that $\SC{Y}\Icurl{k}{Y}\bvec{v} = \bvec{v}$ for all $\bvec{v} \in \vPoly{k}(Y)$.
  }
\end{gathered}
\]

\subsubsection{Gradient space}

  Based on the vertex and edge components of a given $\underline{q}_h \in \Xgrad{k}{h}$, we can construct a continuous function $q_{\Eh}$ on the mesh edge skeleton whose restriction $q_{\Eh|E} \in \Poly{k+1}(E)$ to an edge $E \in \Eh$ is the unique polynomial that takes the value $q_V$ at each vertex $V \in \VE$ and satisfies $\lproj{k-1}{E} q_{\Eh|E} = q_E$.

For any mesh face $F \in \Fh$, we define the face gradient $\cGF{k} : \Xgrad{k}{F} \to \vPoly{k}(F)$ and the scalar trace $\trF : \Xgrad{k}{F} \to \Poly{k+1}(F)$ such that, for all $\underline{q}_F \in \Xgrad{k}{F}$,
\[
\begin{gathered}
  \int_F\cGF{k}\underline{q}_F\cdot(\bvec{w}+\bvec{\tau})
  = \sum_{E\in\EF}\omega_{FE}\int_E q_{\Eh} (\bvec{w}\cdot\normal_{FE})
  +\int_F \SG{F}\underline{q}_F\cdot\bvec{\tau}\quad\forall (\bvec{w},\bvec{\tau})\in\Roly{k}(F)\times\cRoly{k}(F),
  \\
  \int_F \trF\underline{q}_F\DIV_F \bvec{w}=-\int_F \cGF{k}\underline{q}_F\cdot\bvec{w}+\sum_{E\in\EF}\omega_{FE}\int_E \trE\underline{q}_E (\bvec{w}\cdot\normal_{FE})\quad\forall\bvec{w}\in\cRoly{k+2}(F).
\end{gathered}
\]
Similarly, for any mesh element $T \in \Th$, the element gradient $\cGT{k} : \Xgrad{k}{T} \to \vPoly{k}(T)$ and the corresponding potential $\Pgrad{k}{T} : \Xgrad{k}{T} \to \Poly{k+1}(T)$ satisfy, for all $\underline{q}_T \in \Xgrad{k}{T}$,
\[
\begin{gathered}
  \int_T\cGT{k}\underline{q}_T\cdot(\bvec{w}+\bvec{\tau})=\sum_{F\in\FT}\omega_{TF}\int_F \trF\underline{q}_F(\bvec{w}\cdot\normal_{TF})
  +\int_T\SG{T}\underline{q}_T\cdot\bvec{\tau}\quad\forall(\bvec{w},\bvec{\tau})\in\Roly{k}(T)\times\cRoly{k}(T),
  \\
  \int_T \Pgrad{k}{T}\underline{q}_T\DIV \bvec{w}=-\int_T \cGT{k}\underline{q}_T\cdot\bvec{w}+\sum_{F\in\FT}\omega_{TF}\int_F \trF\underline{q}_F (\bvec{w}\cdot\normal_{TF})\quad\forall\bvec{w}\in\cRoly{k+2}(T).
\end{gathered}
\]
  The global gradient $\uGh{k} : \Xgrad{k}{h} \to \Xcurl{k}{h}$ is such that, for all $\underline{q}_h \in \Xgrad{k}{h}$,
  \[
  \uGh{k}\underline{q}_h \coloneq \big(
  (\Rproj{k-1}{T}\cGT{k}\underline{q}_T, \Rcproj{\ell_T+1}{T}\cGT{k}\underline{q}_T)_{T \in \Th},
  (\Rproj{k-1}{F}\cGF{k}\underline{q}_F, \Rcproj{\ell_F+1}{F}\cGF{k}\underline{q}_F)_{F \in \Fh},
  (q_{\Eh|E}')_{E \in \Eh}
  \big).
  \]

\subsubsection{Curl space}

For any mesh face $F \in \Fh$, the face curl $\CF{k}:\Xcurl{k}{F}\to\Poly{k}(F)$ and the corresponding tangential trace $\trFt:\Xcurl{k}{F}\to\vPoly{k}(F)$ are such that, for all $\uvec{v}_F \in \Xcurl{k}{F}$,
\[
\begin{gathered}
  \int_F \CF{k}\uvec{v}_F r
  = \int_F \bvec{v}_{\cvec{R},F}\VROT_F r - \sum_{E\in\EF}\omega_{FE}\int_E v_E r\qquad\forall r\in\Poly{k}(F),
  \\
  \begin{multlined}
    \int_F \trFt\uvec{v}_F\cdot(\VROT_F r+\bvec{\tau})=\int_F \CF{k}\uvec{v}_F r + \sum_{E\in\EF}\omega_{FE}\int_E v_E r
    + \int_F \SC{F}\uvec{v}_F\cdot\bvec{\tau}
    \\
    \forall (r,\bvec{\tau})\in \Poly{0,k+1}(F)\times\cRoly{k}(F).
  \end{multlined}
\end{gathered}
\]
For all $T \in \Th$, the element curl $\cCT{k} : \Xcurl{k}{T} \to \vPoly{k}(T)$ and the corresponding vector potential $\Pcurl{k}{T} : \Xcurl{k}{T} \to \vPoly{k}(T)$ are such that, for all $\uvec{v}_T \in \Xcurl{k}{T}$,
\[
\begin{gathered}
  \int_T \cCT{k}\uvec{v}_T\cdot\bvec{w}=\int_T \bvec{v}_{\cvec{R},T}\cdot \CURL\bvec{w}+\sum_{F\in\FT}\omega_{TF}\int_F \trFt\uvec{v}_F\cdot(\bvec{w}\times\normal_F)\qquad\forall \bvec{w}\in\vPoly{k}(T)
  \\
  \begin{multlined}
    \int_T \Pcurl{k}{T}\uvec{v}_T\cdot(\CURL\bvec{w}+\bvec{\tau})
    = \int_T \cCT{k}\uvec{v}_T\cdot\bvec{w}
    - \sum_{F\in\FT}\omega_{TF}\int_F\trFt{}\uvec{v}_F\cdot(\bvec{w}\times\normal_F)
    + \int_T \SC{T}\uvec{v}_T\cdot\bvec{\tau}
    \\
    \forall (\bvec{w},\bvec{\tau})\in\cGoly{k+1}(T)\times\cRoly{k}(T).
  \end{multlined}
\end{gathered}
\]
The global curl $\uCh{k} : \Xcurl{k}{h} \to \Xdiv{k}{h}$ is such that, for all $\uvec{v}_h \in \Xcurl{k}{h}$,
\[
\uCh{k}\uvec{v}_h \coloneq \big(
(\Gproj{k-1}{T}\cCT{k}\uvec{v}_T, \Gcproj{k}{T}\cCT{k}\uvec{v}_T)_{T \in \Th},
(\CF{k}\uvec{v}_F)_{F \in \Fh}
\big).
\]

\subsubsection{Divergence space}

For all $T \in \Th$, the discrete divergence $\DT : \Xdiv{k}{T} \to \Poly{k}(T)$ and the corresponding vector potential $\Pdiv{k}{T} : \Xdiv{k}{T} \to \vPoly{k}(T)$ are such that, for all $\uvec{w}_T \in \Xdiv{k}{T}$,
\[
\begin{gathered}
  \int_T \DT\uvec{w}_T q
  = -\int_T\bvec{w}_{\cvec{G},T}\cdot\GRAD q
  + \sum_{F\in\FT}\omega_{TF}\int_F w_F q\quad\forall q\in\Poly{k}(T),
  \\
  \begin{multlined}
    \int_T \Pdiv{k}{T}\uvec{w}_T\cdot(\GRAD r+\bvec{\tau})
    = -\int_T \DT\uvec{w}_T r+\sum_{F\in\FT}\omega_{TF}\int_F w_F r
    + \int_T \bvec{w}_{\cvec{G},T}^\compl \cdot\bvec{\tau}
    \\
    \forall (r,\bvec{\tau})\in\Poly{0,k+1}(T)\times \cGoly{k}(T).
  \end{multlined}
\end{gathered}
\]
For future use, we notice that
\begin{equation}\label{eq:cCh}
  \cCh{k} = \Pdiv{k}{h} \circ \uCh{k}.
\end{equation}

\subsubsection{Discrete $L^2$-products}

For $\bullet \in \{\GRAD,\CURL,\DIV\}$, we define a discrete $L^2$-product on $\Xbullet{k}{h}$ setting, for all $\underline{x}_h, \underline{y}_h \in \Xbullet{k}{h}$,
\[
(\underline{x}_h, \underline{y}_h)_{\bullet,h}
\coloneq \sum_{T \in \Th} (\underline{x}_T, \underline{y}_T)_{\bullet,T}
\]
with, for all $T \in \Th$,
\[
(\underline{x}_T, \underline{y}_T)_{\bullet,T}
\coloneq \int_T \Pbullet{k}{T}\underline{x}_T \cdot \Pbullet{k}{T}\underline{y}_T
+ s_{\bullet,T}(\underline{x}_T - \Ibullet{k}{T}\Pbullet{k}{T}\underline{x}_T,
\underline{y}_T - \Ibullet{k}{T}\Pbullet{k}{T}\underline{y}_T),
\]
where the first term is responsible for consistency, while $s_{\bullet,T}$ is any positive semi-definite bilinear form that scales in $h$ like the consistency term and such that $(\cdot,\cdot)_{\bullet,T}$ is positive definite on $\Xbullet{k}{T}$.
Possible choices of stabilisation forms, used in the numerical simulations of Section \ref{sec:num.tests}, are the following:
\begin{align*}
  \mathrm{s}_{\GRAD,T}(\underline{r}_T,\underline{q}_T)
  \coloneq{}
  &\sum_{F\in\FT}h_F\int_F\big(\Pgrad{k+1}{T}\underline{r}_T-\trF\underline{r}_F\big) \big(\Pgrad{k+1}{T}\underline{q}_T-\trF\underline{q}_F\big)
  \nonumber\\
  & + \sum_{E\in\ET}h_E^2\int_E \big(\Pgrad{k+1}{T}\underline{r}_T-(r_{\Eh})_{|E}\big) \big(\Pgrad{k+1}{T}\underline{q}_T-(q_{\Eh})_{|E}\big),
\end{align*}
\begin{align*} 
  \mathrm{s}_{\CURL,T}(\uvec{w}_T,\uvec{v}_T)\coloneq{}
  &
  \sum_{F\in\FT}h_F\int_F \big( (\Pcurl{k}{T}\uvec{w}_T)_{{\rm t},F}-\trFt\uvec{w}_F\big)\cdot\big( (\Pcurl{k}{T}\uvec{v}_T)_{{\rm t},F}-\trFt\uvec{v}_F\big)
  \nonumber\\
  & + \sum_{E\in\ET}h_E^2\int_E\big(\Pcurl{k}{T}\uvec{w}_T\cdot\tangent_E-w_E\big)\big(\Pcurl{k}{T}\uvec{v}_T\cdot\tangent_E-v_E\big),
\end{align*}
(recall that ``${\rm t},F$'' denotes the tangential trace on $F$), and
\begin{equation*}
  \mathrm{s}_{\DIV,T}(\uvec{w}_T,\uvec{v}_T)
  \coloneq\sum_{F\in\FT}h_F\int_F\big(\Pdiv{k}{T}\uvec{w}_T\cdot\normal_F-w_F\big)\big(\Pdiv{k}{T}\uvec{v}_T\cdot\normal_F-v_F\big).
\end{equation*}

\subsection{SDDR scheme for the Navier--Stokes equations}

Denote by $\Xgrads{k}{h}$ the subspace of $\Xgrad{k}{h}$ embedding the zero-average condition, i.e.,
\[
\Xgrads{k}{h}
\coloneq\left\{
\underline{q}_h \in \Xgrad{k}{h} \st
(\underline{q}_h, \Igrad{k}{h}1)_{\GRAD,h} = 0
\right\}.
\]
We assume from here on that
\begin{equation}\label{eq:assum.f}
\bvec{f}\in \bvec{C}^0(\overline{\Omega}),
\end{equation}
so that $\Icurl{k}{h}\bvec{f}$ makes sense.
The SDDR scheme reads:
\begin{subequations}\label{eq:ddr}
  \begin{align}\nonumber
    &\text{%
      Find $(\uvec{u}_h,\underline{p}_h)\in \Xcurl{k}{h}\times \Xgrads{k}{h}$ such that, for all $(\uvec{v}_h,\underline{q}_h)\in \Xcurl{k}{h}\times \Xgrads{k}{h}$,
    }
    \\
    \label{eq:ddr.momentum}
    &\nu(\uCh{k}\uvec{u}_h,\uCh{k}\uvec{v}_h)_{\DIV,h}
    + \int_\Omega [\cCh{k} \uvec{u}_h\times \Pcurl{k}{h}\uvec{u}_h]\cdot\Pcurl{k}{h}\uvec{v}_h
    +(\uGh{k}\underline{p}_h,\uvec{v}_h)_{\CURL,h}
    = (\Icurl{k}{h}\bvec{f},\uvec{v}_h)_{\CURL,h},
    \\
    \label{eq:ddr.incompressibility}
    &-(\uvec{u}_h,\uGh{k}\underline{q}_h)_{\CURL,h} = 0.
  \end{align}
\end{subequations}

\subsubsection{Relevant constants}

To state the convergence theorem, we need to define several constants.

Since the topology of $\Omega$ is trivial, the SDDR complex is exact (see \cite[Theorem 2]{Di-Pietro.Droniou:23} and \cite[Section 6.6]{Di-Pietro.Droniou:23*1}), and thus $(\Image\uGh{k})^\perp=(\ker\uCh{k})^\perp$, where the orthogonal is considered for the inner product $(\cdot,\cdot)_{\CURL,h}$ on $\Xcurl{k}{h}$. Invoking the  Poincar\'e inequality \cite[Theorem 4]{Di-Pietro.Droniou:23} (together with \cite[Section 6.6]{Di-Pietro.Droniou:23*1}), we infer the existence of $\Cpoin$ not depending on $h$ such that
\begin{equation}\label{def:CP}
\norm{\CURL,h}{\uvec{v}_h}\le \Cpoin\norm{\DIV,h}{\uCh{k}\uvec{v}_h}\quad\forall \uvec{v}_h\in (\Image\uGh{k})^\perp.
\end{equation}

The inequalities \eqref{eq:bound.P} and \eqref{eq:equiv.norms} ensure the existence of the two continuity constants $\CPcurl$ and $\CPdiv$, independent of $h$, for the potential reconstructions:
\begin{alignat}{2}\label{def:CcPdiv}
  \norm{\bvec{L}^2(\Omega)}{\Pcurl{k}{h}\uvec{v}_h}
  &\le\CPcurl\norm{\CURL,h}{\uvec{v}_h}
  &\qquad& \forall \uvec{v}_h\in\Xcurl{k}{h},\\
  \label{def:CcPcurl}
  \norm{\bvec{L}^2(\Omega)}{\Pdiv{k}{h}\uvec{w}_h}
  &\le \CPdiv\norm{\DIV,h}{\uvec{w}_h}
  &\qquad& \forall \uvec{w}_h\in\Xdiv{k}{h}.
\end{alignat}
Since $\Cpoin$, $\CPcurl$, and $\CPdiv$ can be bounded from above by constants depending only on the mesh regularity parameter and $\Omega$, we will possibly include them in the hidden constant in the notation $\lesssim$.

We also need to define the following discrete Sobolev constant:
\begin{equation}\label{def:Csobh}
\begin{aligned}
  \Csobh \coloneq \max\Bigg\{\frac{\norm{\bvec{L}^4(\Omega)}{\Pcurl{k}{h}\uvec{v}_h}}{\norm{\DIV,h}{\uCh{k}\uvec{v}_h}}\,:\,{}&\uvec{v}_h\in(\Image\uGh{k})^\perp\backslash\{\bvec{0}\}\Bigg\}.
\end{aligned}
\end{equation}

\begin{remark}[Discrete Sobolev embedding]
  Establishing error estimates on schemes for the Navier--Stokes problem requires a Sobolev embedding from the velocity space into (at least) $\bvec{L}^4(\Omega)$. When considering a Laplacian-based formulation, the velocity space is contained in $\bvec{H}^1(\Omega)$ and Sobolev embeddings are available, including for discrete versions of this space; see, e.g. \cite[Appendix B]{Droniou.Eymard.ea:18} or \cite[Theorems 6.5 and 6.40]{Di-Pietro.Droniou:20}. For the formulation \eqref{eq:weak}, the velocity is only in $\Hcurl{\Omega}\cap\Hdiv{\Omega}$ (with a zero divergence and suitable boundary conditions); Sobolev embeddings are then much more challenging and, for polytopal domains, essentially require the convexity of the domain to ensure an $\bvec{H}^1$ regularity on the velocity \cite[Theorem 2.17]{Amrouche.Bernardi.ea:98}. Discrete Sobolev embeddings have been established for N\'ed\'elec finite elements \cite{Girault:90}, drawing strongly on the inclusion of the discrete space into $\Hcurl{\Omega}$, but to our knowledge still remain a largely open question for polytopal numerical methods.

  As a result of the exactness of the SDDR complex, the constant $\Csobh$ is finite for each $h$, but a proper Sobolev embedding for the SDDR method would require to prove that it remains bounded independently of $h$. Given the similarity (at least for $k=0$) on tetrahedral/hexahedral meshes between the SDDR complex and the Lagrange--N\'ed\'elec--Raviart--Thomas finite element complex, and accounting for the embedding proved for the latter in \cite{Girault:90}, we believe that $\Csobh$ is indeed bounded with respect to $h$, at least on quasi-uniform meshes.
  This conjecture seems to be confirmed by the orders of convergence observed in the numerical tests of Section \ref{sec:num.tests}.
  A rigorous proof of this bound is postponed to a future work, the main novelty of this paper being a new pressure-robust scheme for the Navier--Stokes equations supporting general polytopal meshes.
  In the rest of the paper, we always make the dependency on $\Csobh$ explicit (i.e., it is not hidden in the constants appearing in $\lesssim$).
\end{remark}

\subsubsection{Discrete norms}

  The error on the velocity will be measured in the following graph norm on $\Xcurl{k}{h}$:
  \begin{equation}\label{eq:norm.U.h}
    \norm{\bvec{U},h}{\uvec{v}_h}
    \coloneq \left(
    \norm{\CURL,h}{\uvec{v}_h}^2 + \norm{\DIV,h}{\uCh{k}\uvec{v}_h}^2
    \right)^{\frac12},
  \end{equation}
  and we denote the corresponding dual norm by $\norm{\bvec{U},h,*}{{\cdot}}$.

  For the error on the pressure, on the other hand, we use a $W^{1,\frac43}$-like norm.
As shown in \cite{Girault:90}, it is indeed expected that the pressure for the continuous model belongs to $W^{1,\frac43}(\Omega)$. The proof of this result isn't, however, based on direct estimates obtained through suitable test functions, and \cite{Girault:90} does not establish error estimates on the approximation of the pressure in this space, only in $L^2(\Omega)$ (through a duality argument that strongly relies on the conformity of the considered finite element method).
For all $s\in [1,\infty)$, we define the $L^s$-like norm of $\uvec{w}_h\in\Xcurl{k}{h}$ by setting
  \begin{equation}\label{eq:def.Ls.Xcurl}
    \begin{gathered}
      \norm{s,\CURL,h}{\uvec{w}_h} \coloneq \left(\sum_{T\in\Th}\norm{s,\CURL,T}{\uvec{w}_T}^s\right)^{\frac1s}\\
      \text{with }
      \begin{aligned}[t]
        \norm{s,\CURL,T}{\uvec{w}_T}^s
        &\coloneq
        \norm{\bvec{L}^s(T)}{\Pcurl{k}{T}\uvec{w}_T}^{s}+\sum_{F\in\FT}h_F\norm{\bvec{L}^s(F)}{(\Pcurl{k}{T}\uvec{w}_T)_{{\rm t},F}-\trFt\uvec{w}_F}^s\\
        &\quad
        +\sum_{E\in\ET}h_E^2\norm{L^s(E)}{\Pcurl{k}{T}\uvec{w}_T\cdot\tangent_E-w_E}^s,
      \end{aligned}
    \end{gathered}
  \end{equation}
  where $(\Pcurl{k}{T}\uvec{w}_T)_{{\rm t},F}=\normal_F\times((\Pcurl{k}{T}\uvec{w}_T)\times\normal_F)$ is the tangential projection on $F$ of $\Pcurl{k}{T}\uvec{w}_T$. We note that $\norm{s,\CURL,h}{{\cdot}}$ is the equivalent of \eqref{eq:def.pot.norm} (for $l=1$) in the exterior calculus version of the complex and that, for $s=2$, it is actually the norm associated with the $L^2$-product $(\cdot,\cdot)_{\CURL,h}$. The discrete $W^{1,\frac43}$-norm on the pressure space is then defined by:
For all $\underline{r}_h\in\Xgrads{k}{h}$,
\begin{equation}\label{eq:W14/3-norm}
  \norm{P,h}{\underline{r}_h}
  \coloneq
  \max\left\{(\uGh{k}\underline{r}_h,\uGh{k}\underline{q}_h)_{\CURL,h}\,:\,
  \underline{q}_h\in \Xgrads{k}{h}\,,\;\norm{4,\CURL,h}{\uGh{k}\underline{q}_h}\le 1\right\}.
\end{equation}
Taking $\underline{q}_h=c\underline{r}_h$ with $c\ge 0$ scaled to ensure that $\norm{4,\CURL,h}{\uGh{k}\underline{q}_h}= 1$ shows that $\uGh{k}\underline{r}_h=0$ whenever $\norm{P,h}{\underline{r}_h}=0$ and thus, by the Poincar\'e inequality for $\Xgrads{k}{h}$ \cite[Theorem 3]{Di-Pietro.Droniou:23}, that $\norm{P,h}{{\cdot}}$ is indeed a norm on this space.

\subsubsection{Error estimates}

  Let the $\bvec{H}^{(k+1,2)}(\Th)$-seminorm be defined by: For all $\bvec{w}\in \bvec{H}^{\max(k+1,2)}(\Th)$,
  \[
  \begin{aligned}
    \seminorm{\bvec{H}^{(k+1,2)}(\Th)}{\bvec{w}}\coloneq{}&\left(\sum_{T\in\Th}\seminorm{\bvec{H}^{(k+1,2)}(T)}{\bvec{w}}^2\right)^{\frac12},\\
    \mbox{ with }
    \seminorm{\bvec{H}^{(k+1,2)}(T)}{\bvec{w}}\coloneq{}&\left\{
    \begin{array}{ll}
      \seminorm{\bvec{H}^1(T)}{\bvec{w}}+h_T\seminorm{\bvec{H}^2(T)}{\bvec{w}}&\mbox{ if $k=0$},\\
      \seminorm{\bvec{H}^{k+1}(T)}{\bvec{w}}& \mbox{ if $k\ge 1$.}
    \end{array}\right.
  \end{aligned}
  \]

The convergence theorem, proved in Section \ref{sec:analysis}, is the following.

\begin{theorem}[Error estimate for the SDDR scheme]\label{thm:convergence}
  Assume that \eqref{eq:weak} has a solution $(\bvec{u},p)\in\Hcurl{\Omega}\times H^1(\Omega)$ such that
  \begin{equation}\label{eq:regularity.u}
    \begin{aligned}
      \bvec{u}\in{}& \bvec{W}^{1,4}(\Omega)\cap \bvec{W}^{k+1,4}(\Th)\cap\bvec{H}^{\max(k+1,2)}(\Th),\\
      \CURL \bvec{u}\in{}& \bvec{C}^0(\overline{\Omega})\cap \bvec{H}(\CURL;\Omega)\cap \bvec{H}^{k+2}(\Th)\cap \bvec{W}^{k+1,4}(\Th),\\
      \CURL\bvec{u}\times\bvec{u}\in{}& \bvec{H}^{\max(k+1,2)}(\Th),\quad\CURL\CURL\bvec{u}\in \bvec{H}^{\max(k+1,2)}(\Th).
    \end{aligned}
  \end{equation}
  We denote by 
  \begin{equation}\label{eq:def.Ru}
    \bvec{R}_{\bvec{u}} \coloneq \nu\CURL\CURL\bvec{u}+\CURL\bvec{u}\times\bvec{u}
  \end{equation}
  the part of $\bvec{f}$ depending only on the velocity. 
  Under the mesh assumption of Section \ref{sec:mesh}, suppose further that
  \begin{equation}\label{eq:def.chi}
    \chi\coloneq \nu-\CPdiv\Csobh^2\Cpoin\nu^{-1}\norm{\CURL,h}{\Icurl{k}{h}\bvec{R}_{\bvec{u}}}>0.
  \end{equation}
  Then, if $(\uvec{u}_h,\underline{p}_h)$ is the solution to the SDDR scheme \eqref{eq:ddr}, the following
  error estimates hold:
  \begin{align}\label{eq:error.estimate.u}
    \norm{\bvec{U},h}{\uvec{u}_h-\Icurl{k}{h}\bvec{u}}
    &\lesssim \mathcal K_1(\bvec{u})\chi^{-1}h^{k+1},\\
    \label{eq:error.estimate.p}
    \norm{P,h}{\underline{p}_h-\Igrad{k}{h}p}
    &\lesssim \mathcal K_2(\bvec{u})\chi^{-1}h^{k+1},
  \end{align}
  where
  \[
  \begin{aligned}
    \mathcal K_1(\bvec{u})={}&\big\lbrack\nu(|\CURL\CURL\bvec{u}|_{\bvec{H}^{(k+1,2)}(\Th)}+|\CURL\bvec{u}|_{\bvec{H}^{k+1}(\Th)}+|\CURL\bvec{u}|_{\bvec{H}^{k+2}(\Th)})
    \\ \nonumber
    &\hphantom{\big\lbrack}
    + |\CURL \bvec{u}\times\bvec{u}|_{\bvec{H}^{(k+1,2)}(\Th)}
    +\seminorm{\bvec{W}^{k+1,4}(\Th)}{\CURL\bvec{u}}\norm{\bvec{L}^4(\Omega)}{\bvec{u}}
    +\seminorm{\bvec{W}^{1,4}(\Omega)}{\bvec{u}}\seminorm{\bvec{W}^{k+1,4}(\Th)}{\bvec{u}}\big\rbrack\\
    &+\Csobh\seminorm{\bvec{W}^{1,4}(\Omega)}{\bvec{u}}\seminorm{\bvec{H}^{(k+1,2)}(\Th)}{\bvec{u}}+\seminorm{\bvec{H}^{(k+1,2)}(\Th)}{\bvec{u}},\\
    \mathcal K_2(\bvec{u})={}&\left(1+\Csobh\nu^{-1}\norm{\CURL,h}{\Icurl{k}{h}\bvec{R}_{\bvec{u}}}+\seminorm{\bvec{W}^{1,4}(\Omega)}{\bvec{u}}\right)\mathcal K_1(\bvec{u}).
  \end{aligned}
  \]
  
\end{theorem}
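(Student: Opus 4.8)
The plan is to follow the classical route for saddle-point problems: a consistency estimate for the interpolate $(\Icurl{k}{h}\bvec{u},\Igrad{k}{h}p)$ of the exact solution, an a priori bound on the discrete velocity, and an energy argument built on the discrete Helmholtz--Hodge decomposition $\Xcurl{k}{h}=\Image\uGh{k}\oplus(\Image\uGh{k})^\perp$ afforded by the exactness of the SDDR complex (the orthogonal being taken for $(\cdot,\cdot)_{\CURL,h}$). Throughout I set $\uvec{e}_h\coloneq\uvec{u}_h-\Icurl{k}{h}\bvec{u}$, $\underline{\epsilon}_h\coloneq\underline{p}_h-\Igrad{k}{h}p$, and $t_h(\uvec{a}_h,\uvec{b}_h,\uvec{c}_h)\coloneq\int_\Omega[\cCh{k}\uvec{a}_h\times\Pcurl{k}{h}\uvec{b}_h]\cdot\Pcurl{k}{h}\uvec{c}_h$, and I record the two structural facts $t_h(\uvec{a}_h,\uvec{b}_h,\uvec{b}_h)=0$ (pointwise orthogonality of the cross product) and $t_h(\uGh{k}\underline{r}_h,\cdot,\cdot)=0$ (since $\cCh{k}=\Pdiv{k}{h}\circ\uCh{k}$ by \eqref{eq:cCh} and $\uCh{k}\circ\uGh{k}=0$). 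Pressure-robustness is built in at the level of consistency: inserting $(\Icurl{k}{h}\bvec{u},\Igrad{k}{h}p)$ into \eqref{eq:ddr.momentum} defines a consistency form $\mathcal{E}_h\colon\Xcurl{k}{h}\to\Real$, and writing $\bvec{f}=\bvec{R}_{\bvec{u}}+\GRAD p$ with the commutation $\Icurl{k}{h}\GRAD p=\uGh{k}\Igrad{k}{h}p$, the irrotational part of $\bvec{f}$ cancels exactly against the discrete pressure term, so that $\mathcal{E}_h$ depends on $p$ only through $\bvec{R}_{\bvec{u}}$.

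The three remaining contributions to $\mathcal{E}_h$ are estimated using the $L^2$-product consistency of $(\cdot,\cdot)_{\CURL,h}$ and $(\cdot,\cdot)_{\DIV,h}$, the commutation $\uCh{k}\Icurl{k}{h}=\Idiv{k}{h}\CURL$, the adjoint consistency of the discrete curl (whose boundary term vanishes thanks to the natural boundary condition $\CURL\bvec{u}\times\normal=\bvec{0}$), and the $\bvec{L}^2$/$\bvec{L}^4$ approximation properties of $\Pcurl{k}{h}\circ\Icurl{k}{h}$ and $\Pdiv{k}{h}\circ\Idiv{k}{h}$ proved in Appendix~\ref{sec:bounds.pec}; the leading terms reassemble into $\int_\Omega[\nu\CURL\CURL\bvec{u}+\CURL\bvec{u}\times\bvec{u}-\bvec{R}_{\bvec{u}}]\cdot\Pcurl{k}{h}\uvec{v}_h=0$, leaving $|\mathcal{E}_h(\uvec{v}_h)|\lesssim[\text{bracketed part of }\mathcal{K}_1(\bvec{u})]\,h^{k+1}\norm{\bvec{U},h}{\uvec{v}_h}$, and a parallel but simpler computation shows \eqref{eq:ddr.incompressibility} is consistent up to $O\big(h^{k+1}\seminorm{\bvec{H}^{(k+1,2)}(\Th)}{\bvec{u}}\big)$ since $\DIV\bvec{u}=0$ and $\bvec{u}\cdot\normal=0$ kill the leading term $\int_\Omega\bvec{u}\cdot\GRAD(\cdot)$. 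For the a priori bound, testing \eqref{eq:ddr.momentum} with $\uvec{v}_h=\uvec{u}_h$ and using that \eqref{eq:ddr.incompressibility} places $\uvec{u}_h\in(\Image\uGh{k})^\perp$ (so the pressure term and $(\uGh{k}\Igrad{k}{h}p,\uvec{u}_h)_{\CURL,h}$ drop) together with $t_h(\uvec{u}_h,\uvec{u}_h,\uvec{u}_h)=0$, \eqref{def:CP} and \eqref{def:Csobh}, gives $\norm{\DIV,h}{\uCh{k}\uvec{u}_h}\le\Cpoin\nu^{-1}\norm{\CURL,h}{\Icurl{k}{h}\bvec{R}_{\bvec{u}}}$ and hence $\norm{\bvec{L}^4(\Omega)}{\Pcurl{k}{h}\uvec{u}_h}\le\Csobh\Cpoin\nu^{-1}\norm{\CURL,h}{\Icurl{k}{h}\bvec{R}_{\bvec{u}}}$ (the same bound yields existence of a discrete solution by Brouwer's theorem).

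For the velocity estimate I decompose $\uvec{e}_h=\uGh{k}\underline{\delta}_h+\uvec{e}_h^\perp$ with $\uvec{e}_h^\perp\in(\Image\uGh{k})^\perp$; the consistency of \eqref{eq:ddr.incompressibility} gives $\norm{\CURL,h}{\uGh{k}\underline{\delta}_h}\lesssim h^{k+1}\seminorm{\bvec{H}^{(k+1,2)}(\Th)}{\bvec{u}}$, so by \eqref{def:CP} it suffices to control $\norm{\DIV,h}{\uCh{k}\uvec{e}_h^\perp}=\norm{\DIV,h}{\uCh{k}\uvec{e}_h}$. Subtracting the scheme at $(\Icurl{k}{h}\bvec{u},\Igrad{k}{h}p)$ from the scheme at $(\uvec{u}_h,\underline{p}_h)$ and testing with $\uvec{e}_h^\perp$ makes the pressure term vanish, the viscous term equal $\nu\norm{\DIV,h}{\uCh{k}\uvec{e}_h^\perp}^2$, and the nonlinear difference equal $t_h(\uvec{e}_h,\uvec{u}_h,\uvec{e}_h^\perp)+t_h(\Icurl{k}{h}\bvec{u},\uvec{e}_h,\uvec{e}_h^\perp)$ (expanding $\uvec{u}_h=\Icurl{k}{h}\bvec{u}+\uvec{e}_h$ in one slot at a time). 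In the first summand only the $(\Image\uGh{k})^\perp$-part survives in the first slot and Hölder ($\bvec{L}^2\times\bvec{L}^4\times\bvec{L}^4$) with the a priori bound, \eqref{def:Csobh} and $\norm{\bvec{L}^2(\Omega)}{\cCh{k}\uvec{e}_h^\perp}\le\CPdiv\norm{\DIV,h}{\uCh{k}\uvec{e}_h^\perp}$ produces precisely $\CPdiv\Csobh^2\Cpoin\nu^{-1}\norm{\CURL,h}{\Icurl{k}{h}\bvec{R}_{\bvec{u}}}\,\norm{\DIV,h}{\uCh{k}\uvec{e}_h^\perp}^2$; in the second summand the $\uvec{e}_h^\perp$-part of the middle slot gives $t_h(\Icurl{k}{h}\bvec{u},\uvec{e}_h^\perp,\uvec{e}_h^\perp)=0$ and the $\uGh{k}\underline{\delta}_h$-part is higher order (bounded via $\norm{\bvec{L}^4(\Th)}{\cCh{k}\Icurl{k}{h}\bvec{u}}\lesssim\seminorm{\bvec{W}^{1,4}(\Omega)}{\bvec{u}}$ by the remaining $\mathcal{K}_1$-terms). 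Moving the surviving cross term to the right, dividing by $\chi>0$ and adding back $\norm{\CURL,h}{\uGh{k}\underline{\delta}_h}$ yields \eqref{eq:error.estimate.u}. For the pressure, given $\underline{q}_h\in\Xgrads{k}{h}$ with $\norm{4,\CURL,h}{\uGh{k}\underline{q}_h}\le1$, I test the momentum error equation with $\uvec{v}_h=\uGh{k}\underline{q}_h$: the viscous term vanishes since $\uCh{k}\uGh{k}=0$, $\mathcal{E}_h(\uGh{k}\underline{q}_h)$ is controlled using $\norm{\CURL,h}{\uGh{k}\underline{q}_h}\lesssim\norm{4,\CURL,h}{\uGh{k}\underline{q}_h}\le1$ ($\Omega$ bounded), and the nonlinear difference is bounded by Hölder with the previous estimates, $\norm{\bvec{L}^4(\Th)}{\cCh{k}\Icurl{k}{h}\bvec{u}}\lesssim\seminorm{\bvec{W}^{1,4}(\Omega)}{\bvec{u}}$ and $\norm{\bvec{L}^4(\Omega)}{\Pcurl{k}{h}\uGh{k}\underline{q}_h}\le1$, producing the factor $1+\Csobh\nu^{-1}\norm{\CURL,h}{\Icurl{k}{h}\bvec{R}_{\bvec{u}}}+\seminorm{\bvec{W}^{1,4}(\Omega)}{\bvec{u}}$; taking the maximum over admissible $\underline{q}_h$ gives \eqref{eq:error.estimate.p}.

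I expect the two delicate points to be the consistency estimate, which leans on the adjoint-consistency and boundedness properties of the DDR potentials and differential operators (the content of Appendix~\ref{sec:bounds.pec}) and on the exact identity $\Icurl{k}{h}\GRAD p=\uGh{k}\Igrad{k}{h}p$ that makes the scheme pressure-robust, and the treatment of the nonlinear term in the energy argument: the expansion of $\uvec{u}_h=\Icurl{k}{h}\bvec{u}+\uvec{e}_h$ must be chosen so that the a priori $\bvec{L}^4$-bound on $\Pcurl{k}{h}\uvec{u}_h$ (and not a bound on $\bvec{u}$) is what multiplies $\norm{\DIV,h}{\uCh{k}\uvec{e}_h^\perp}^2$ and thereby reproduces exactly the constant in $\chi$, while one must carefully verify that the discretely non-solenoidal remainder $\uGh{k}\underline{\delta}_h$ contributes only at order $h^{k+1}$.
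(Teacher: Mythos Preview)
Your proposal is correct and follows essentially the same route as the paper: the same consistency errors (your $\mathcal{E}_h$ and the incompressibility residual are the paper's $\mathcal{E}_{h,1}$ and $\mathcal{E}_{h,2}$), the same pressure-robust a priori bound via $\Icurl{k}{h}\GRAD p=\uGh{k}\Igrad{k}{h}p$, and the same energy argument on the orthogonal part of the error. The only cosmetic difference is that the paper decomposes $\Icurl{k}{h}\bvec{u}=\uvec{w}_h^\sharp+\uvec{w}_h^\perp$ and tests with $\uvec{v}_h=\uvec{u}_h-\uvec{w}_h^\perp$, whereas you decompose $\uvec{e}_h=\uGh{k}\underline{\delta}_h+\uvec{e}_h^\perp$ and test with $\uvec{e}_h^\perp$; since $\uvec{u}_h\in(\Image\uGh{k})^\perp$ these are the same test function (with $\uGh{k}\underline{\delta}_h=-\uvec{w}_h^\sharp$), and your bound on $\norm{\CURL,h}{\uGh{k}\underline{\delta}_h}$ is exactly the paper's bound \eqref{eq:bound.wsharp} on $\norm{\CURL,h}{\uvec{w}_h^\sharp}$.
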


  \begin{remark}[Data smallness assumption]
    Notice that, similarly to \cite[Eq.~(3.8)]{Linke.Merdon:16} or \cite[Eq.~(68)]{Castanon-Quiroz.Di-Pietro:20}, the data smallness assumption \eqref{eq:def.chi} only involves the solenoidal part of the forcing term, consistently with the fact that we aim at error estimates that are robust for large irrotational body forces.
  \end{remark}


\section{Analysis}\label{sec:analysis}

\subsection{A priori estimates and existence of a solution to the scheme}

A priori estimates on the velocity are straightforward, and do not depend on $\Csobh$.

\begin{lemma}[A priori estimate on the velocity]\label{lem:apriori.u}
  If $(\uvec{u}_h,\underline{p}_h)$ solves \eqref{eq:ddr}, then
  \begin{equation}\label{eq:est.u}
    \norm{\DIV,h}{\uCh{k}\uvec{u}_h}\le \Cpoin\nu^{-1}\norm{\CURL,h}{\Icurl{k}{h}\bvec{f}}\quad\mbox{ and }\quad
    \norm{\CURL,h}{\uvec{u}_h}\le \Cpoin^2\nu^{-1}\norm{\CURL,h}{\Icurl{k}{h}\bvec{f}}.
  \end{equation}
  If, moreover, the pressure in \eqref{eq:weak} satisfies $p_{|T}\in C^1(\overline{T})$ for all $T\in\Th$, then the terms $\bvec{f}$ above can be replaced with $\bvec{R}_{\bvec{u}}$ defined by \eqref{eq:def.Ru}, that is:
    \begin{equation}\label{eq:est.u.robust}
      \norm{\DIV,h}{\uCh{k}\uvec{u}_h}\le \Cpoin\nu^{-1}\norm{\CURL,h}{\Icurl{k}{h}\bvec{R}_{\bvec{u}}}\quad\mbox{ and }
      \quad\norm{\CURL,h}{\uvec{u}_h}\le \Cpoin^2\nu^{-1}\norm{\CURL,h}{\Icurl{k}{h}\bvec{R}_{\bvec{u}}}.
    \end{equation}
\end{lemma}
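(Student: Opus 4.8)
The plan is to run the standard energy estimate, testing the discrete momentum equation with the discrete velocity. First I would set $\uvec{v}_h=\uvec{u}_h$ in \eqref{eq:ddr.momentum}. The convective term $\int_\Omega[\cCh{k}\uvec{u}_h\times\Pcurl{k}{h}\uvec{u}_h]\cdot\Pcurl{k}{h}\uvec{u}_h$ vanishes pointwise by orthogonality of the cross product (this is precisely the non-dissipativity built into the scheme), and the pressure term $(\uGh{k}\underline{p}_h,\uvec{u}_h)_{\CURL,h}$ vanishes by choosing $\underline{q}_h=\underline{p}_h$ in \eqref{eq:ddr.incompressibility} and using the symmetry of $(\cdot,\cdot)_{\CURL,h}$. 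What remains is the identity $\nu\norm{\DIV,h}{\uCh{k}\uvec{u}_h}^2=(\Icurl{k}{h}\bvec{f},\uvec{u}_h)_{\CURL,h}$.

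Next I would note that \eqref{eq:ddr.incompressibility} gives $\uvec{u}_h\perp\uGh{k}\underline{q}_h$ for all $\underline{q}_h\in\Xgrads{k}{h}$, and since $\uGh{k}$ annihilates the constant interpolant $\Igrad{k}{h}1$ (consistency of the discrete gradient), this extends to $\uvec{u}_h\in(\Image\uGh{k})^\perp$. Applying the Cauchy--Schwarz inequality for $(\cdot,\cdot)_{\CURL,h}$ and then the Poincar\'e inequality \eqref{def:CP} yields $(\Icurl{k}{h}\bvec{f},\uvec{u}_h)_{\CURL,h}\le\norm{\CURL,h}{\Icurl{k}{h}\bvec{f}}\,\norm{\CURL,h}{\uvec{u}_h}\le\Cpoin\norm{\CURL,h}{\Icurl{k}{h}\bvec{f}}\,\norm{\DIV,h}{\uCh{k}\uvec{u}_h}$. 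Dividing by $\norm{\DIV,h}{\uCh{k}\uvec{u}_h}$ gives the first bound in \eqref{eq:est.u}, and invoking \eqref{def:CP} once more on $\norm{\CURL,h}{\uvec{u}_h}$ gives the second.

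For the pressure-robust bounds \eqref{eq:est.u.robust}, I would write $\bvec{f}=\bvec{R}_{\bvec{u}}+\GRAD p$ from \eqref{eq:momentum} and \eqref{eq:def.Ru}, so that by linearity of the interpolator $(\Icurl{k}{h}\bvec{f},\uvec{u}_h)_{\CURL,h}=(\Icurl{k}{h}\bvec{R}_{\bvec{u}},\uvec{u}_h)_{\CURL,h}+(\Icurl{k}{h}\GRAD p,\uvec{u}_h)_{\CURL,h}$. The local regularity $p_{|T}\in C^1(\overline T)$ is exactly what is needed for $\Igrad{k}{h}p$ and $\Icurl{k}{h}\GRAD p$ to be well-defined and for the commutation property of the DDR complex (see \cite{Di-Pietro.Droniou:23}) to apply, namely $\Icurl{k}{h}\GRAD p=\uGh{k}\Igrad{k}{h}p\in\Image\uGh{k}$; hence the gradient contribution vanishes against $\uvec{u}_h\in(\Image\uGh{k})^\perp$. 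Repeating the previous paragraph with $\bvec{R}_{\bvec{u}}$ in place of $\bvec{f}$ then delivers \eqref{eq:est.u.robust}.

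There is no serious obstacle here: this is the energy estimate, and the scheme was deliberately designed so that the convective and pressure terms disappear upon testing with $\uvec{u}_h$. The only points that require a little care are justifying $\uvec{u}_h\in(\Image\uGh{k})^\perp$ from the zero-average test space (handled via the constant interpolant) and, for the robust estimate, checking that the interpolator--gradient commutation formula is indeed applicable under the merely piecewise-$C^1$ pressure regularity assumed in the statement.
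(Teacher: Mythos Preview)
Your proposal is correct and follows essentially the same route as the paper: test with $(\uvec{u}_h,\underline{p}_h)$, use orthogonality of the cross product to kill the convective term, use \eqref{eq:ddr.incompressibility} to kill the pressure term, then apply Cauchy--Schwarz and the Poincar\'e inequality \eqref{def:CP}; for the robust version, split $\bvec{f}=\bvec{R}_{\bvec{u}}+\GRAD p$ and use the local commutation $\Icurl{k}{T}\GRAD p=\uGT{k}\Igrad{k}{T}p$ patched globally to drop the gradient contribution. The paper glosses over the point you flag about $\uvec{u}_h\in(\Image\uGh{k})^\perp$ (the test space is only $\Xgrads{k}{h}$), and your handling via $\uGh{k}\Igrad{k}{h}1=\uvec{0}$ is exactly the right fix; conversely, the paper is a touch more explicit than you are about why the local commutation formulas can be glued into a global one (namely, $p\in H^1(\Omega)$ together with piecewise $C^1$ forces continuity of $p$ and of the tangential traces of $\GRAD p$ across faces and edges).
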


\begin{proof}
  Take $(\uvec{v}_h,\underline{q}_h)=(\uvec{u}_h,\underline{p}_h)$ as a test function in \eqref{eq:ddr}, use the fact that $\cCh{k}\uvec{u}_h\times\Pcurl{k}{h}\uvec{u}_h$ is orthogonal to $\Pcurl{k}{h}\uvec{u}_h$ and add together \eqref{eq:ddr.momentum} and \eqref{eq:ddr.incompressibility} to get
  \begin{equation}\label{eq:apriori.u}
    \nu\norm{\DIV,h}{\uCh{k}\uvec{u}_h}^2=(\Icurl{k}{h}\bvec{f},\uvec{u}_h)_{\CURL,h}\le \norm{\CURL,h}{\Icurl{k}{h}\bvec{f}}\norm{\CURL,h}{\uvec{u}_h}.
  \end{equation}
  The proof of \eqref{eq:est.u} is completed using \eqref{eq:ddr.incompressibility} to see that $\uvec{u}_h\in(\Image\uGh{k})^\perp$ and by invoking the discrete Poincar\'e inequality for the curl \eqref{def:CP}.

  Assume now that the pressure satisfies $p_{|T}\in C^1(\overline{T})$ for all $T\in\Th$.
  Then, the local commutation property \cite[Eq.~(3.38)]{Di-Pietro.Droniou:23} gives $\Icurl{k}{T}\GRAD p=\uGT{k}\Igrad{k}{T}p$.
  Moreover, since $p\in H^1(\Omega)$, $p$ is actually continuous on $\overline{\Omega}$, and the tangential components of its gradients on the edges and faces are also continuous; hence, these local commutation properties can be patched together and yield $\Icurl{k}{h}\GRAD p =\uGh{k}\Igrad{k}{h}p$. As a consequence, $\bvec{R}_{\bvec{u}}=\bvec{f}-\GRAD p$ also has continuous tangential components on the edges and faces, and $\Icurl{k}{h}\bvec{f}=\Icurl{k}{h}\bvec{R}_{\bvec{u}}+\uGh{k}\Igrad{k}{h}p$. In \eqref{eq:apriori.u}, we can therefore write
  \[
  (\Icurl{k}{h}\bvec{f},\uvec{u}_h)_{\CURL,h}=(\Icurl{k}{h}\bvec{R}_{\bvec{u}},\uvec{u}_h)_{\CURL,h}
  +\cancel{(\uGh{k}\Igrad{k}{h}p,\uvec{u}_h)_{\CURL,h}},
  \]
  the cancellation being ensured by \eqref{eq:ddr.incompressibility}. The estimate \eqref{eq:est.u.robust} can therefore be written with $\bvec{R}_{\bvec{u}}$ instead of $\bvec{f}$, which concludes the proof of \eqref{eq:est.u.robust}.
\end{proof}

The a priori estimates on the pressure are naturally done in the $W^{1,\frac43}$-like norm \eqref{eq:W14/3-norm}.

\begin{lemma}[A priori estimates on the pressure]
  If $(\uvec{u}_h,\underline{p}_h)$ solves \eqref{eq:ddr}, then 
  \begin{equation}\label{eq:est.gradp.dual}
    \norm{P,h}{\underline{p}_h}
    \lesssim
    \norm{\CURL,h}{\Icurl{k}{h}\bvec{f}}+\Csobh \norm{\DIV,h}{\uCh{k}\uvec{u}_h}^2.
  \end{equation}
\end{lemma}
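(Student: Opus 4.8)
The plan is to estimate the quantity $(\uGh{k}\underline{p}_h,\uGh{k}\underline{q}_h)_{\CURL,h}$ appearing in \eqref{eq:W14/3-norm} for a generic $\underline{q}_h\in\Xgrads{k}{h}$ normalised so that $\norm{4,\CURL,h}{\uGh{k}\underline{q}_h}\le1$, and then to take the maximum over such $\underline{q}_h$. The crucial observation is that $\uGh{k}\underline{q}_h\in\Xcurl{k}{h}$ is an admissible test function in \eqref{eq:ddr.momentum}; taking $\uvec{v}_h=\uGh{k}\underline{q}_h$ there and isolating the pressure term gives
\begin{align*}
  (\uGh{k}\underline{p}_h,\uGh{k}\underline{q}_h)_{\CURL,h}
  &=(\Icurl{k}{h}\bvec{f},\uGh{k}\underline{q}_h)_{\CURL,h}
  -\nu(\uCh{k}\uvec{u}_h,\uCh{k}\uGh{k}\underline{q}_h)_{\DIV,h}\\
  &\quad-\int_\Omega[\cCh{k}\uvec{u}_h\times\Pcurl{k}{h}\uvec{u}_h]\cdot\Pcurl{k}{h}\uGh{k}\underline{q}_h.
\end{align*}
Since the SDDR spaces form a complex, $\uCh{k}\uGh{k}\underline{q}_h=\bvec{0}$, so the viscous term vanishes identically and only the linear and convective contributions remain.

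For the linear term, the Cauchy--Schwarz inequality for the discrete $L^2$-product $(\cdot,\cdot)_{\CURL,h}$ gives $(\Icurl{k}{h}\bvec{f},\uGh{k}\underline{q}_h)_{\CURL,h}\le\norm{\CURL,h}{\Icurl{k}{h}\bvec{f}}\,\norm{\CURL,h}{\uGh{k}\underline{q}_h}$, and it then suffices to invoke the $h$-uniform comparison $\norm{\CURL,h}{{\cdot}}\lesssim\norm{4,\CURL,h}{{\cdot}}$ on $\Xcurl{k}{h}$ to control the last factor by the normalisation. This comparison follows from H\"older's inequality applied on each element, face, and edge (with exponents $2$ and $4$), followed by a discrete Cauchy--Schwarz over the mesh entities, using $\sum_{T\in\Th}|T|=|\Omega|$ together with the mesh-regularity bounds $h_F|F|\lesssim|T|$, $h_E^3\lesssim|T|$ and the uniform control of $\card(\FT)$ and $\card(\ET)$, which absorb the face/edge weights $h_F$ and $h_E^2$ of \eqref{eq:def.Ls.Xcurl}. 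Hence the linear term is $\lesssim\norm{\CURL,h}{\Icurl{k}{h}\bvec{f}}$.

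For the convective term, I would use $|\cCh{k}\uvec{u}_h\times\Pcurl{k}{h}\uvec{u}_h|\le|\cCh{k}\uvec{u}_h|\,|\Pcurl{k}{h}\uvec{u}_h|$ and H\"older's inequality with exponents $(2,4,4)$ to bound it by $\norm{\bvec{L}^2(\Omega)}{\cCh{k}\uvec{u}_h}\,\norm{\bvec{L}^4(\Omega)}{\Pcurl{k}{h}\uvec{u}_h}\,\norm{\bvec{L}^4(\Omega)}{\Pcurl{k}{h}\uGh{k}\underline{q}_h}$, and then estimate the three factors separately. By \eqref{eq:cCh} and \eqref{def:CcPcurl}, $\norm{\bvec{L}^2(\Omega)}{\cCh{k}\uvec{u}_h}=\norm{\bvec{L}^2(\Omega)}{\Pdiv{k}{h}\uCh{k}\uvec{u}_h}\le\CPdiv\norm{\DIV,h}{\uCh{k}\uvec{u}_h}$; since \eqref{eq:ddr.incompressibility} shows $\uvec{u}_h\in(\Image\uGh{k})^\perp$ (as already used in the proof of Lemma~\ref{lem:apriori.u}), the definition \eqref{def:Csobh} gives $\norm{\bvec{L}^4(\Omega)}{\Pcurl{k}{h}\uvec{u}_h}\le\Csobh\norm{\DIV,h}{\uCh{k}\uvec{u}_h}$; and, since the global potential coincides element-wise with the local one, $\norm{\bvec{L}^4(\Omega)}{\Pcurl{k}{h}\uGh{k}\underline{q}_h}\le\norm{4,\CURL,h}{\uGh{k}\underline{q}_h}\le1$. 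This bounds the convective term by $\lesssim\Csobh\norm{\DIV,h}{\uCh{k}\uvec{u}_h}^2$, with $\Csobh$ kept explicit per our convention. Adding the two estimates and taking the maximum over $\underline{q}_h$ yields \eqref{eq:est.gradp.dual}.

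I do not anticipate a genuine difficulty here: once the test function $\uvec{v}_h=\uGh{k}\underline{q}_h$ is chosen, the complex relation $\uCh{k}\circ\uGh{k}=\bvec{0}$ removes the viscous contribution, and the rest reduces to H\"older's inequality together with the already-available bounds on $\Pcurl{k}{h}$, $\Pdiv{k}{h}$ and $\Csobh$. The only point requiring some care is the $h$-uniformity of the comparison $\norm{\CURL,h}{{\cdot}}\lesssim\norm{4,\CURL,h}{{\cdot}}$, a routine bookkeeping exercise with the weights in \eqref{eq:def.Ls.Xcurl} and the mesh-regularity parameters.
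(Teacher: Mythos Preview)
Your proof is correct and follows essentially the same route as the paper's: test the discrete momentum equation with $\uvec{v}_h=\uGh{k}\underline{q}_h$, use the complex property $\uCh{k}\circ\uGh{k}=\bvec{0}$ to kill the viscous term, bound the source term by Cauchy--Schwarz and the convective term by H\"older $(2,4,4)$, and finish with the comparison $\norm{\CURL,h}{{\cdot}}\lesssim\norm{4,\CURL,h}{{\cdot}}$. The only cosmetic difference is that the paper obtains this last comparison and the bound $\norm{\bvec{L}^4(\Omega)}{\Pcurl{k}{h}\uGh{k}\underline{q}_h}\lesssim\norm{4,\CURL,h}{\uGh{k}\underline{q}_h}$ by invoking the exterior-calculus appendix (the discrete Lebesgue embedding \eqref{eq:discrete.lebesgue} and the norm equivalence \eqref{eq:equiv.norms}), whereas you sketch the same inequalities directly from H\"older and mesh regularity---both arguments are equivalent.
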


\begin{proof}
  Take $\underline{q}_h\in \Xgrads{k}{h}$ and plug $\uvec{v}_h=\uGh{k}\underline{q}_h$ into \eqref{eq:ddr.momentum}. Since $\uCh{k}\uGh{k}=0$ by the complex property, this gives
  \[
  (\uGh{k}\underline{p}_h,\uGh{k}\underline{q}_h)_{\CURL,h} =(\Icurl{k}{h}\bvec{f},\uGh{k}\underline{q}_h)_{\CURL,h}
  - \int_\Omega [\Pdiv{k}{h}\uCh{k} \uvec{u}_h\times \Pcurl{k}{h}\uvec{u}_h]\cdot\Pcurl{k}{h}\uGh{k}\underline{q}_h,
  \]
  where we have additionally used the characterisation \eqref{eq:cCh} of $\cCh{k}$.
  We then apply a Cauchy--Schwarz inequality and a generalised H\"older inequality with exponents $(2,4,4)$ respectively to the first and second term in the right-hand side, to get
  \begin{align}
    (\uGh{k}\underline{p}_h,\uGh{k}\underline{q}_h)_{\CURL,h} \le{}& \norm{\CURL,h}{\Icurl{k}{h}\bvec{f}}\norm{\CURL,h}{\uGh{k}\underline{q}_h}\nonumber\\
    &+\norm{\bvec{L}^2(\Omega)}{\Pdiv{k}{h}\uCh{k} \uvec{u}_h}\norm{\bvec{L}^4(\Omega)}{\Pcurl{k}{h}\uvec{u}_h}\norm{\bvec{L}^4(\Omega)}{\Pcurl{k}{h}\uGh{k}\underline{q}_h}\nonumber\\
    \lesssim{}&
    \norm{\CURL,h}{\Icurl{k}{h}\bvec{f}}\norm{\CURL,h}{\uGh{k}\underline{q}_h}+\Csobh\norm{\DIV,h}{\uCh{k}\uvec{u}_h}^2\norm{4,\CURL,h}{\uGh{k}\underline{q}_h},
    \label{eq:est.pressure.1}
  \end{align}
  where the second line follows using \eqref{eq:bound.P} below with $(s,l,f)=(2,2,T)$ and $(s,l,f)=(4,1,T)$, the norm equivalence \eqref{eq:equiv.norms} to write $\norm{\bvec{L}^2(\Omega)}{\Pdiv{k}{h}\uCh{k} \uvec{u}_h}\lesssim \norm{\DIV,h}{\uCh{k}\uvec{u}_h}$ and $\norm{\bvec{L}^4(\Omega)}{\Pcurl{k}{h}\uGh{k}\underline{q}_h}\lesssim \norm{4,\CURL,h}{\uGh{k}\underline{q}_h}$, and the definition of $\Csobh$ together with \eqref{eq:ddr.incompressibility}
  to write $\norm{\bvec{L}^4(\Omega)}{\Pcurl{k}{h}\uvec{u}_h}\le\Csobh\norm{\DIV,h}{\uCh{k}\uvec{u}_h}$.

  The discrete Lebesgue inequality \eqref{eq:discrete.lebesgue} with $(s,t)=(2,4)$ and $l=1$ gives, for all $T\in\Th$,
  \begin{equation}\label{eq:est.G.4.2}
    \norm{\CURL,T}{\uGT{k}\underline{q}_T}\lesssim h_T^{\frac34}\norm{4,\CURL,T}{\uGT{k}\underline{q}_T}.
  \end{equation}
  Squaring, summing over $T\in\Th$, using the Cauchy--Schwarz inequality, and invoking the mesh regularity property to write $h_T^3\lesssim |T|$, we infer
  \[
  \norm{\CURL,h}{\uGh{k}\underline{q}_h}^2\lesssim \left(\sum_{T\in\Th}h_T^3\right)^{\frac12}\norm{4,\CURL,h}{\uGh{k}\underline{q}_h}^2
  \lesssim |\Omega|^{\frac12}\norm{4,\CURL,h}{\uGh{k}\underline{q}_h}^2.
  \]
  Take the square root and plug the resulting estimate into \eqref{eq:est.pressure.1} to obtain
  \[
  (\uGh{k}\underline{p}_h,\uGh{k}\underline{q}_h)_{\CURL,h}\lesssim \left(\norm{\CURL,h}{\Icurl{k}{h}\bvec{f}}+
  \Csobh\norm{\DIV,h}{\uCh{k}\uvec{u}_h}^2\right)\norm{4,\CURL,h}{\uGh{k}\underline{q}_h}.
  \]
  Taking the maximum over $\underline{q}_h\in\Xgrads{k}{h}$ such that $\norm{4,\CURL,h}{\uGh{k}\underline{q}_h}\le 1$ concludes the proof.
\end{proof}

\subsection{Bound on the consistency error}

We define the consistency errors $\mathcal E_{h,1}(\bvec{u},p;\cdot):\Xcurl{k}{h}\to\Real$ and $\mathcal E_{h,2}(\bvec{u},p;\cdot):\Xgrad{k}{h}\to\Real$ by setting, for all $(\uvec{v}_h, \underline{q}_h) \in \Xcurl{k}{h} \times \Xgrad{k}{h}$, 
\begin{align}
  \label{eq:def.Eh.u}
  \mathcal E_{h,1}(\bvec{u},p;\uvec{v}_h)
  &\coloneq
  \begin{aligned}[t]
    &(\Icurl{k}{h}\bvec{f},\uvec{v}_h)_{\CURL,h}-\nu(\uCh{k}\Icurl{k}{h}\bvec{u},\uCh{k}\uvec{v}_h)_{\DIV,h}
    \\
    &\quad
    -\int_\Omega [\cCh{k}\Icurl{k}{h}\bvec{u}\times\Pcurl{k}{h}\Icurl{k}{h}\bvec{u}]\cdot\Pcurl{k}{h}\uvec{v}_h
    -(\uGh{k}\Igrad{k}{h}p,\uvec{v}_h)_{\CURL,h},
  \end{aligned}
  \\
  \label{eq:def.Eh.p}
  \mathcal E_{h,2}(\bvec{u},p;\underline{q}_h)
  &\coloneq
  (\Icurl{k}{h}\bvec{u},\uGh{k}\underline{q}_h)_{\CURL,h}.
\end{align}

We endow $\Xcurl{k}{h}$ with the norm $\norm{\bvec{U},h}{{\cdot}}$ defined by \eqref{eq:norm.U.h}. The space $\Xgrads{k}{h}$ is endowed with the norm $\norm{\bvec{G},h}{{\cdot}} \coloneq \norm{\CURL,h}{\uGh{k}{\cdot}}$ and the corresponding dual norm is denoted by $\norm{\bvec{G},h,*}{{\cdot}}$.

\begin{lemma}[Consistency bounds]\label{lem:consistency.bounds}
  We have the following bounds on the consistency errors:
  For all $\bvec{u}$ satisfying \eqref{eq:regularity.u} together with the boundary conditions $\CURL\bvec{u}\times\normal=\bvec{0}$ and $\bvec{u}\cdot\normal=0$ on $\partial\Omega$, it holds
  \begin{alignat}{2}\nonumber
    \norm{\bvec{U},h,*}{\mathcal E_{h,1}(\bvec{u},p;\cdot)}
    &\lesssim h^{k+1}\big\lbrack\nu(|\CURL\CURL\bvec{u}|_{\bvec{H}^{(k+1,2)}(\Th)}+|\CURL\bvec{u}|_{\bvec{H}^{k+1}(\Th)}+|\CURL\bvec{u}|_{\bvec{H}^{k+2}(\Th)})
    \\ \nonumber
    &\hphantom{\lesssim h^{k+1}\big\lbrack}
    + |\CURL \bvec{u}\times\bvec{u}|_{\bvec{H}^{(k+1,2)}(\Th)}
    +\seminorm{\bvec{W}^{k+1,4}(\Th)}{\CURL\bvec{u}}\norm{\bvec{L}^4(\Omega)}{\bvec{u}}
    \\
    &\hphantom{\lesssim h^{k+1}\big\lbrack}
    +\seminorm{\bvec{W}^{1,4}(\Omega)}{\bvec{u}}\seminorm{\bvec{W}^{k+1,4}(\Th)}{\bvec{u}}\big\rbrack,
    \label{eq:est.Eh.u}\\
    \label{eq:est.Eh.p}
    \norm{\bvec{G},h,*}{\mathcal E_{h,2}(\bvec{u},p;\cdot)}
    &\lesssim h^{k+1}|\bvec{u}|_{\bvec{H}^{(k+1,2)}(\Th)}.
  \end{alignat}
\end{lemma}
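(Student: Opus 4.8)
The plan is to bound the two consistency errors separately, \eqref{eq:est.Eh.u} requiring most of the work. \emph{Reduction of $\mathcal E_{h,1}$.} Since $(\bvec{u},p)$ solves the momentum equation strongly under \eqref{eq:regularity.u}, we have $\bvec{f}=\bvec{R}_{\bvec{u}}+\GRAD p$ with $\bvec{R}_{\bvec{u}}$ as in \eqref{eq:def.Ru}. As $\GRAD p$ is curl-free it has continuous tangential traces on the edges and faces, and $p$ is continuous on $\overline{\Omega}$ (being in $H^1(\Omega)$ and piecewise $C^1$); arguing as in the proof of Lemma~\ref{lem:apriori.u}, the local gradient commutation property patches into $\Icurl{k}{h}\GRAD p=\uGh{k}\Igrad{k}{h}p$, so by linearity of $\Icurl{k}{h}$ the pressure terms in \eqref{eq:def.Eh.u} cancel and
\[
\mathcal E_{h,1}(\bvec{u},p;\uvec{v}_h)=\nu\,\term_{\mathrm v}(\uvec{v}_h)+\term_{\mathrm c}(\uvec{v}_h),
\]
where $\term_{\mathrm v}(\uvec{v}_h)\coloneq(\Icurl{k}{h}\CURL\CURL\bvec{u},\uvec{v}_h)_{\CURL,h}-(\uCh{k}\Icurl{k}{h}\bvec{u},\uCh{k}\uvec{v}_h)_{\DIV,h}$ and $\term_{\mathrm c}(\uvec{v}_h)\coloneq(\Icurl{k}{h}(\CURL\bvec{u}\times\bvec{u}),\uvec{v}_h)_{\CURL,h}-\int_\Omega[\cCh{k}\Icurl{k}{h}\bvec{u}\times\Pcurl{k}{h}\Icurl{k}{h}\bvec{u}]\cdot\Pcurl{k}{h}\uvec{v}_h$; here $\Icurl{k}{h}\CURL\CURL\bvec{u}$ is meaningful since $\CURL\CURL\bvec{u}=\nu^{-1}(\bvec{R}_{\bvec{u}}-\CURL\bvec{u}\times\bvec{u})$ has continuous tangential traces.

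\emph{The viscous term $\term_{\mathrm v}$ --- the crux.} I would use the commutation property $\uCh{k}\Icurl{k}{h}\bvec{u}=\Idiv{k}{h}\CURL\bvec{u}$ and the identity \eqref{eq:cCh}, then carry out a discrete integration by parts built from the \emph{weak} definitions of $\cCT{k}$, $\Pcurl{k}{T}$, $\DT$ and $\Pdiv{k}{T}$ (there being, in general, no pointwise identity linking the potential reconstructions to the differential operators), designed to mimic $\int_\Omega\CURL\CURL\bvec{u}\cdot\bvec{w}=\int_\Omega\CURL\bvec{u}\cdot\CURL\bvec{w}-\int_{\partial\Omega}(\CURL\bvec{u}\times\normal)\cdot\bvec{w}$. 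The boundary integral drops thanks to $\CURL\bvec{u}\times\normal=\bvec{0}$, while the inter-element face contributions are absorbed using the continuity of $\CURL\bvec{u}$ and the stabilisation terms of $(\cdot,\cdot)_{\CURL,h}$, which penalise the mismatch between the element potentials and the single-valued tangential traces $\trFt$. Closing with the approximation properties of the $L^2$-projectors and of the potential reconstructions $\Pcurl{k}{T}$, $\Pdiv{k}{T}$ recalled in Appendix~\ref{sec:bounds.pec} (inequalities \eqref{eq:bound.P} and \eqref{eq:equiv.norms}), together with the norm equivalences for $(\cdot,\cdot)_{\CURL,h}$ and $(\cdot,\cdot)_{\DIV,h}$, yields $|\nu\,\term_{\mathrm v}(\uvec{v}_h)|\lesssim\nu h^{k+1}\big(|\CURL\CURL\bvec{u}|_{\bvec{H}^{(k+1,2)}(\Th)}+|\CURL\bvec{u}|_{\bvec{H}^{k+1}(\Th)}+|\CURL\bvec{u}|_{\bvec{H}^{k+2}(\Th)}\big)\norm{\bvec{U},h}{\uvec{v}_h}$, the first seminorm coming from the consistency of $(\cdot,\cdot)_{\CURL,h}$ applied to $\Icurl{k}{h}\CURL\CURL\bvec{u}$ and the last two from the consistency of $(\cdot,\cdot)_{\DIV,h}$ applied to $\Idiv{k}{h}\CURL\bvec{u}$ and from the face/boundary remainders. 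The careful bookkeeping of these face terms is the main obstacle; it is most conveniently organised by transposing the curl-curl consistency analysis already available for DDR discretisations of Maxwell-type problems.

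\emph{The convective term $\term_{\mathrm c}$.} By the consistency of the product $(\cdot,\cdot)_{\CURL,h}$ (legitimate since $\CURL\bvec{u}\times\bvec{u}\in\bvec{C}^0(\overline{\Omega})$), $(\Icurl{k}{h}(\CURL\bvec{u}\times\bvec{u}),\uvec{v}_h)_{\CURL,h}=\int_\Omega(\CURL\bvec{u}\times\bvec{u})\cdot\Pcurl{k}{h}\uvec{v}_h+\mathrm{O}\big(h^{k+1}|\CURL\bvec{u}\times\bvec{u}|_{\bvec{H}^{(k+1,2)}(\Th)}\,\norm{\bvec{U},h}{\uvec{v}_h}\big)$. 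Then I would split $\bvec{a}\times\bvec{b}-\bvec{a}'\times\bvec{b}'=(\bvec{a}-\bvec{a}')\times\bvec{b}+\bvec{a}'\times(\bvec{b}-\bvec{b}')$ with $\bvec{a}=\CURL\bvec{u}$, $\bvec{a}'=\cCh{k}\Icurl{k}{h}\bvec{u}=\Pdiv{k}{h}\Idiv{k}{h}\CURL\bvec{u}$ (by \eqref{eq:cCh} and the commutation property), $\bvec{b}=\bvec{u}$, $\bvec{b}'=\Pcurl{k}{h}\Icurl{k}{h}\bvec{u}$, and conclude with a generalised H\"older inequality with exponents $(4,4,2)$, the $\bvec{L}^4$ approximation estimates $\norm{\bvec{L}^4(\Omega)}{\bvec{a}-\bvec{a}'}\lesssim h^{k+1}\seminorm{\bvec{W}^{k+1,4}(\Th)}{\CURL\bvec{u}}$ and $\norm{\bvec{L}^4(\Omega)}{\bvec{b}-\bvec{b}'}\lesssim h^{k+1}\seminorm{\bvec{W}^{k+1,4}(\Th)}{\bvec{u}}$ for the potential reconstructions, the bound $\norm{\bvec{L}^4(\Omega)}{\bvec{a}'}\lesssim\seminorm{\bvec{W}^{1,4}(\Omega)}{\bvec{u}}$ up to higher-order terms, and $\norm{\bvec{L}^2(\Omega)}{\Pcurl{k}{h}\uvec{v}_h}\lesssim\norm{\bvec{U},h}{\uvec{v}_h}$. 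This produces exactly the three remaining contributions in \eqref{eq:est.Eh.u}.

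\emph{The bound \eqref{eq:est.Eh.p}.} As $\mathcal E_{h,2}(\bvec{u},p;\underline{q}_h)=(\Icurl{k}{h}\bvec{u},\uGh{k}\underline{q}_h)_{\CURL,h}$ involves only $\bvec{u}$, it suffices to invoke the adjoint consistency of the discrete gradient in $(\cdot,\cdot)_{\CURL,h}$: for $\bvec{w}$ smooth with $\bvec{w}\cdot\normal=0$ on $\partial\Omega$, $(\Icurl{k}{h}\bvec{w},\uGh{k}\underline{q}_h)_{\CURL,h}=-\int_\Omega(\DIV\bvec{w})\,\Pgrad{k+1}{h}\underline{q}_h+\mathrm{O}\big(h^{k+1}\seminorm{\bvec{H}^{(k+1,2)}(\Th)}{\bvec{w}}\,\norm{\CURL,h}{\uGh{k}\underline{q}_h}\big)$ --- a standard property in the analysis of DDR schemes for Poisson-type problems, derivable from the bounds of Appendix~\ref{sec:bounds.pec}. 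Taking $\bvec{w}=\bvec{u}$ and using $\DIV\bvec{u}=0$ together with $\bvec{u}\cdot\normal=0$ (both encoded in \eqref{eq:weak}) kills the leading term, leaving \eqref{eq:est.Eh.p}. This is the easy half.
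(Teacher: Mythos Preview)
Your proposal is correct and follows essentially the same route as the paper. The only cosmetic difference is that, for the viscous term, the paper inserts the pivot $\int_\Omega\CURL\CURL\bvec{u}\cdot\Pcurl{k}{h}\uvec{v}_h$ and then invokes the already-established primal consistency of $(\cdot,\cdot)_{\CURL,h}$ and the adjoint consistency of the discrete curl (cited as \cite[Eq.~(6.12) and Theorem 10]{Di-Pietro.Droniou:23}) as black boxes, whereas you describe re-deriving the latter via a discrete integration by parts with face bookkeeping --- which is precisely what those cited results encapsulate, and which you yourself flag as ``the curl-curl consistency analysis already available for DDR discretisations of Maxwell-type problems''. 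The treatment of the convective term (primal consistency plus the telescoping split and H\"older $(4,4,2)$) and of $\mathcal E_{h,2}$ (adjoint consistency for the gradient with $\DIV\bvec{u}=0$, $\bvec{u}\cdot\normal=0$) match the paper exactly.
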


\begin{proof}
  We first prove \eqref{eq:est.Eh.u}.
  The assumption \eqref{eq:regularity.u} gives $\CURL\bvec{u}\times\bvec{u}\in \bvec{C}^0(\overline{\Omega})$ and $(\nu\CURL\CURL\bvec{u})_{|T}\in \bvec{H}^2(T)\subset \bvec{C}^0(\overline{T})$ for all $T\in\Th$. Combined with \eqref{eq:assum.f}, this shows that $(\GRAD p)_{|T}\in \bvec{C}^0(\overline{T})$ for all $T\in\Th$. Reasoning as in the proof of Lemma \ref{lem:apriori.u}, we can therefore split $\Icurl{k}{h}\bvec{f}$ and use the commutation properties to write 
  \[
  \Icurl{k}{h}\bvec{f}
  = \nu\Icurl{k}{h}(\CURL\CURL\bvec{u})+\Icurl{k}{h}(\CURL\bvec{u}\times\bvec{u})+\uGh{k}(\Igrad{k}{h}p).
  \]
  We therefore obtain, for all $\uvec{v}_h\in\Xcurl{k}{h}$,
  \begin{align*}
    \mathcal E_{h,1}(\bvec{u},p;\uvec{v}_h)
    &=\nu(\Icurl{k}{h}(\CURL\CURL\bvec{u}),\uvec{v}_h)_{\CURL,h}-\nu(\uCh{k}\Icurl{k}{h}\bvec{u},\uCh{k}\uvec{v}_h)_{\DIV,h}\\
    &\quad
    +(\Icurl{k}{h}(\CURL \bvec{u} \times \bvec{u}),\uvec{v}_h)_{\CURL,h}-\int_\Omega [\cCh{k}\Icurl{k}{h}\bvec{u}\times\Pcurl{k}{h}\Icurl{k}{h}\bvec{u}]\cdot\Pcurl{k}{h}\uvec{v}_h\\
    &=
    \nu\left[(\Icurl{k}{h}(\CURL\CURL\bvec{u}),\uvec{v}_h)_{\CURL,h}-(\Idiv{k}{h}\CURL\bvec{u},\uCh{k}\uvec{v}_h)_{\DIV,h}\right]\\
    &\quad
    +(\Icurl{k}{h}(\CURL \bvec{u} \times \bvec{u}),\uvec{v}_h)_{\CURL,h}-\int_\Omega [\cCh{k}\Icurl{k}{h}\bvec{u}\times\Pcurl{k}{h}\Icurl{k}{h}\bvec{u}]\cdot\Pcurl{k}{h}\uvec{v}_h
  \end{align*}
  where, for the second equality, we have used the commutation property \cite[Eq.~(3.39)]{Di-Pietro.Droniou:23} (which extends to the SDDR sequence, see \cite[Proposition 8]{Di-Pietro.Droniou:23*1}) to write $\uCh{k}\Icurl{k}{h}\bvec{u}=\Idiv{k}{h}\CURL\bvec{u}$ in the second term of the first line.
  
  We find a bound for the linear terms first. Introducing the integral $\int_\Omega\CURL\CURL\bvec{u}\cdot\Pcurl{k}{h}\uvec{v}_h$ and splitting into two differences, then using the primal and adjoint consistency \cite[Eq.~(6.12) and Theorem 10]{Di-Pietro.Droniou:23} to estimate each term, and recalling the definition \eqref{eq:norm.U.h} of $\norm{\bvec{U},h}{{\cdot}}$, we get
  \[
  \begin{aligned}
    &\left|(\Icurl{k}{h}(\CURL\CURL\bvec{u}),\uvec{v}_h)_{\CURL,h}
    - (\Idiv{k}{h}\CURL\bvec{u},\uCh{k}\uvec{v}_h)_{\DIV,h}\right|
    \\
    &\qquad
    \leq
    \left|(\Icurl{k}{h}(\CURL\CURL\bvec{u}),\uvec{v}_h)_{\CURL,h}-\int_\Omega\CURL\CURL\bvec{u}\cdot\Pcurl{k}{h}\uvec{v}_h\right|
    \\
    &\qquad\quad
    +\left|\int_\Omega\CURL\CURL\bvec{u}\cdot\Pcurl{k}{h}\uvec{v}_h-(\Idiv{k}{h}\CURL\bvec{u},\uCh{k}\uvec{v}_h)_{\DIV,h}\right|
    \\
    &\qquad
    \lesssim
    h^{k+1}|\CURL\CURL\bvec{u}|_{\bvec{H}^{(k+1,2)}(\Th)}\norm{\CURL,h}{\uvec{v}_h}
    \\
    &\qquad\quad
    + h^{k+1}\left(
    |\CURL\bvec{u}|_{\bvec{H}^{k+1}(\Th)}
    + |\CURL\bvec{u}|_{\bvec{H}^{k+2}(\Th)}
    \right)\norm{\bvec{U},h}{\uvec{v}_h}.
  \end{aligned}
  \]
  
  Next we deal with the nonlinear terms in the same manner, by adding and subtracting successively $\int_\Omega (\CURL \bvec{u}\times \bvec{u})\cdot\Pcurl{k}{h}\uvec{v}_h$ and $\int_\Omega (\cCh{k}\Icurl{k}{h}\bvec{u}\times \bvec{u})\cdot\Pcurl{k}{h}\uvec{v}_h$, and using again \cite[Eq.~(6.12)]{Di-Pietro.Droniou:23} in the first term and generalised H\"older inequalities in the remaining terms to obtain
  \begin{align}
    &\bigg|(\Icurl{k}{h}(\CURL \bvec{u} \times \bvec{u}),\uvec{v}_h)_{\CURL,h}
    - \int_\Omega (\cCh{k}\Icurl{k}{h}\bvec{u}\times\Pcurl{k}{h}\Icurl{k}{h}\bvec{u})\cdot\Pcurl{k}{h}\uvec{v}_h\bigg|
    \nonumber\\
    &\quad
    \leq\left|(\Icurl{k}{h}(\CURL \bvec{u} \times \bvec{u}),\uvec{v}_h)_{\CURL,h}
    - \int_\Omega (\CURL \bvec{u}\times \bvec{u})\cdot\Pcurl{k}{h}\uvec{v}_h\right|
    \nonumber\\
    &\qquad
    + \left|\int_\Omega \left[(\CURL \bvec{u} - \cCh{k}\Icurl{k}{h}\bvec{u})\times \bvec{u}\right]\cdot\Pcurl{k}{h}\uvec{v}_h
    \right|
    \nonumber\\
    &\qquad
    + \left|\int_\Omega \left[
      \cCh{k}\Icurl{k}{h}\bvec{u}\times (\bvec{u} - \Pcurl{k}{h}\Icurl{k}{h}\bvec{u})
      \right]
      \cdot\Pcurl{k}{h}\uvec{v}_h
    \right|
    \nonumber\\
    &\quad
    \lesssim
    h^{k+1}|\CURL \bvec{u}\times \bvec{u}|_{\bvec{H}^{(k+1,2)}(\Th)}\norm{\CURL,h}{\uvec{v}_h}
    \nonumber\\
    &\qquad
    + \norm{\bvec{L}^4(\Omega)}{\CURL \bvec{u}-\cCh{k}\Icurl{k}{h}\bvec{u}}\norm{\bvec{L}^4(\Omega)}{\bvec{u}}\norm{\bvec{L}^2(\Omega)}{\Pcurl{k}{h}\uvec{v}_h}
    \nonumber\\
    &\qquad
    +\norm{\bvec{L}^4(\Omega)}{\cCh{k}\Icurl{k}{h}\bvec{u}}\norm{\bvec{L}^4(\Omega)}{\bvec{u}-\Pcurl{k}{h}\Icurl{k}{h}\bvec{u}}\norm{\bvec{L}^2(\Omega)}{\Pcurl{k}{h}\uvec{v}_h}.  
  \label{est:nonlinear.term}
  \end{align}
  We first notice that
  \begin{equation*}
    \cCT{k}\Icurl{k}{T}\bvec{w}
    \overset{\eqref{eq:cCh}}=\Pdiv{k}{T}\uCT{k}\Icurl{k}{T}\bvec{w}
    = \Pdiv{k}{T}\Idiv{k}{T}\CURL\bvec{w},
  \end{equation*}
  where the second equality is a consequence of \cite[Proposition 8 and Section 6.6]{Di-Pietro.Droniou:23*1} and \cite[Eq.~(3.39)]{Di-Pietro.Droniou:23} (for $\bvec{w}\in \bvec{H}^2(T)$, the adaptation to $\bvec{w}\in \bvec{W}^{1,4}(T)$ such that $\CURL\bvec{w}\in \bvec{W}^{1,4}(T)$ being straightforward).
  Then, $\norm{\bvec{L}^4(\Omega)}{\CURL \bvec{u}-\cCh{k}\Icurl{k}{h}\bvec{u}}=\norm{\bvec{L}^4(\Omega)}{\CURL \bvec{u}-\Pdiv{k}{h}\Idiv{k}{h}\CURL\bvec{u}}$.
  We note that, for each $T\in\Th$, the local operators $\Pcurl{k}{T}\circ\Icurl{k}{T}$ and $\Pdiv{k}{T}\circ\Idiv{k}{T}$ are projections by \cite[Proposition 1.35]{Di-Pietro.Droniou:20} since they reproduce exactly polynomials up to degree $k$, cf. \cite[Eqs.~(4.7) and (4.12)]{Di-Pietro.Droniou:23}. The estimate \eqref{eq:bound.PI} below (with $m=0$, $l=1,2$ and $s=4$) give a bound on these projections, which enables us to get the local approximation properties stated in \cite[Lemma 1.43]{Di-Pietro.Droniou:20} applied to $(p,l,s,q,m)=(4,k,k+1,1,0)$.
  These approximation properties yield
  \begin{align*}
    \norm{\bvec{L}^4(\Omega)}{\CURL \bvec{u}-\Pdiv{k}{h}\Idiv{k}{h}\CURL\bvec{u}}{}&\lesssim h^{k+1}|\CURL\bvec{u}|_{\bvec{W}^{k+1,4}(\Th)},\\
    \norm{\bvec{L}^4(\Omega)}{\bvec{u}-\Pcurl{k}{h}\Icurl{k}{h}\bvec{u}}{}&\lesssim h^{k+1}|\bvec{u}|_{\bvec{W}^{k+1,4}(\Th)}.
  \end{align*}
  Thus, using \eqref{eq:bound.P}, \eqref{eq:equiv.norms} and \eqref{eq:bound.dI} (with $m=0$) below to bound $\norm{\bvec{L}^2(\Omega)}{\Pcurl{k}{h}\uvec{v}_h}$ and $\norm{\bvec{L}^4(\Omega)}{\cCh{k}\Icurl{k}{h}\bvec{u}}$, and plugging the estimates above into \eqref{est:nonlinear.term}, we obtain
  \begin{multline*}
\bigg|(\Icurl{k}{h}(\CURL \bvec{u} \times \bvec{u}),\uvec{v}_h)_{\CURL,h}
    - \int_\Omega (\cCh{k}\Icurl{k}{h}\bvec{u}\times\Pcurl{k}{h}\Icurl{k}{h}\bvec{u})\cdot\Pcurl{k}{h}\uvec{v}_h\bigg|\\
    \lesssim h^{k+1}\Big(|\CURL \bvec{u}\times\bvec{u}|_{\bvec{H}^{(k+1,2)}(\Th)}+ |\CURL\bvec{u}|_{\bvec{W}^{k+1,4}(\Th)}\norm{\bvec{L}^4(\Omega)}{\bvec{u}}+\seminorm{\bvec{W}^{1,4}(\Th)}{\bvec{u}}|\bvec{u}|_{\bvec{W}^{k+1,4}(\Th)}\Big)\norm{\CURL,h}{\uvec{v}_h}.
  \end{multline*}
  Gathering all the estimates we infer \eqref{eq:est.Eh.u}.

  The bound \eqref{eq:est.Eh.p} is a straightforward consequence of the adjoint consistency for the gradient \cite[Theorem 9]{Di-Pietro.Droniou:23}, where $\DIV\bvec{u}=0$.
\end{proof}

\subsection{Proof of the convergence theorem}

We start with the error estimate on the velocity. Let $\uvec{e}_h \coloneq \uvec{u}_h-\Icurl{k}{h}\bvec{u}\in\Xcurl{k}{h}$ and $\underline{\epsilon}_h \coloneq \underline{p}_h-\Igrad{k}{h}p\in\Xgrad{k}{h}$.
By definition \eqref{eq:def.Eh.u} of the first consistency error, we have, for all $\uvec{v}_h\in\Xcurl{k}{h}$,
\begin{align}
  \mathcal E_{h,1}{}&(\bvec{u},p;\uvec{v}_h)
  =
  \nu(\uCh{k}\uvec{e}_h,\uCh{k}\uvec{v}_h)_{\DIV,h}\nonumber\\
  &\quad
  +\int_\Omega \left(
  \cCh{k}\uvec{u}_h\times\Pcurl{k}{h}\uvec{u}_h - \cCh{k}\Icurl{k}{h}\bvec{u}\times\Pcurl{k}{h}\Icurl{k}{h}\bvec{u}
  \right)\cdot\Pcurl{k}{h}\uvec{v}_h+(\uGh{k}\underline{\epsilon}_h,\uvec{v}_h)_{\CURL,h}\nonumber\\
  &=\nu(\uCh{k}\uvec{e}_h,\uCh{k}\uvec{v}_h)_{\DIV,h}
  \nonumber\\
  &\quad
  + \int_\Omega \left(
  \cCh{k}\uvec{e}_h\times\Pcurl{k}{h}\uvec{u}_h + \cCh{k}\Icurl{k}{h}\bvec{u}\times\Pcurl{k}{h}\uvec{e}_h
  \right)\cdot\Pcurl{k}{h}\uvec{v}_h+(\uGh{k}\underline{\epsilon}_h,\uvec{v}_h)_{\CURL,h},
  \label{eq:error.bound.E1}
\end{align}
  where we have added and subtracted $\cCh{k}\Icurl{k}{h}\bvec{u} \times \Pcurl{k}{h}\uvec{u}_h$ inside parentheses to pass to the second line.
  We then write $\Icurl{k}{h}\bvec{u}=\uvec{w}_h^\sharp+\uvec{w}_h^\bot\in \Image\uGh{k}\oplus(\Image\uGh{k})^\bot$, where the orthogonal is taken for the inner product $(\cdot,\cdot)_{\CURL,h}$.
  Since $\uCh{k}\circ\uGh{k}=\uvec{0}$, we have
  \begin{equation}\label{eq:uCh.I.u=uCh.w}
    \uCh{k}\Icurl{k}{h}\bvec{u}=\uCh{k}\uvec{w}_h^\bot.
  \end{equation}
  Selecting $\uvec{v}_h=\uvec{u}_h-\uvec{w}_h^\bot$, so that
  \begin{equation}\label{eq:uCh.vh}
    \text{%
      $\uCh{k}\uvec{v}_h\overset{\eqref{eq:uCh.I.u=uCh.w}}=\uCh{k}\uvec{e}_h$ and $\uvec{e}_h=\uvec{v}_h-\uvec{w}_h^\sharp$,
    }
  \end{equation}
  we obtain
  \begin{align*}
    \mathcal E_{h,1}(\bvec{u},p;\uvec{v}_h)
    &=
    \nu\norm{\DIV,h}{\uCh{k}\uvec{e}_h}^2\\
    &\quad
    +\int_\Omega \Big(
    \cCh{k}\uvec{e}_h\times\Pcurl{k}{h}\uvec{u}_h
    \Big)\cdot\Pcurl{k}{h}\uvec{v}_h + \int_\Omega\cancel{\Big(\cCh{k}\Icurl{k}{h}\bvec{u}\times\Pcurl{k}{h}\uvec{v}_h\Big)\cdot\Pcurl{k}{h}\uvec{v}_h}\\
    &\quad
    -\int_\Omega\Big(
    \cCh{k}\Icurl{k}{h}\bvec{u}\times\Pcurl{k}{h}\uvec{w}_h^\sharp
    \Big)\cdot\Pcurl{k}{h}\uvec{v}_h+\cancel{(\uGh{k}\underline{\epsilon}_h,\uvec{v}_h)},
  \end{align*}
  where the first cancellation is due to the fact that $\cCh{k}\Icurl{k}{h}\bvec{u}\times\Pcurl{k}{h}\uvec{v}_h$ is orthogonal to $\Pcurl{k}{h}\uvec{v}_h$, while the second comes from the fact that $\uvec{v}_h\in(\Image\uGh{k})^\bot$ (see \eqref{eq:ddr.incompressibility}).
  Using the generalised H\"older inequality with exponents $(2,4,4)$, we infer
  \begin{equation}\label{eq:stab.1}
    \begin{aligned}
      \nu\norm{\DIV,h}{\uCh{k}\uvec{e}_h}^2\le{}&
      \mathcal E_{h,1}(\bvec{u},p;\uvec{v}_h)+\norm{\bvec{L}^2(\Omega)}{\cCh{k}\uvec{e}_h}\norm{\bvec{L}^4(\Omega)}{\Pcurl{k}{h}\uvec{u}_h}\norm{\bvec{L}^4(\Omega)}{\Pcurl{k}{h}\uvec{v}_h}
      \\
      &+\norm{\bvec{L}^4(\Omega)}{\cCh{k}\Icurl{k}{h}\bvec{u}}\norm{\bvec{L}^2(\Omega)}{\Pcurl{k}{h}\uvec{w}_h^\sharp}\norm{\bvec{L}^4(\Omega)}{\Pcurl{k}{h}\uvec{v}_h}.
    \end{aligned}
  \end{equation}
  Using \eqref{eq:cCh} together with \eqref{def:CcPdiv} and the definition \eqref{def:Csobh} of $\Csobh$ (recall that both $\uvec{u}_h$ and $\uvec{v}_h$ belong to $(\Image\uGh{k})^\perp$), we have
  \[
  \begin{aligned}
    \norm{\bvec{L}^2(\Omega)}{\cCh{k}\uvec{e}_h}\norm{\bvec{L}^4(\Omega)}{\Pcurl{k}{h}\uvec{u}_h}\norm{\bvec{L}^4(\Omega)}{\Pcurl{k}{h}\uvec{v}_h}
    &\le
    \CPdiv\norm{\DIV,h}{\uCh{k}\uvec{e}_h}\Csobh\norm{\DIV,h}{\uCh{k}\uvec{u}_h}
    \Csobh\norm{\DIV,h}{\uCh{k}\uvec{v}_h}
    \\
    \overset{\eqref{eq:uCh.vh}}&=
    \CPdiv\norm{\DIV,h}{\uCh{k}\uvec{e}_h}^2\Csobh^2\norm{\DIV,h}{\uCh{k}\uvec{u}_h}
    \\
    \overset{\eqref{eq:est.u.robust}}&\le
    \CPdiv\Csobh^2\Cpoin\nu^{-1}\norm{\CURL,h}{\Icurl{k}{h} \bvec{R}_{\bvec{u}}}
    \norm{\DIV,h}{\uCh{k}\uvec{e}_h}^2.
  \end{aligned}
  \]
  Notice that the usage of \eqref{eq:est.u.robust} is justified since, as seen in the proof of Lemma \ref{lem:consistency.bounds}, the assumptions \eqref{eq:assum.f} and \eqref{eq:regularity.u} ensure that $p_{|T}\in C^1(\overline{T})$ for all $T\in\Th$.
  Plugging this bound into \eqref{eq:stab.1}, recalling the definition \eqref{eq:def.chi} of $\chi$, and using the definition \eqref{eq:norm.U.h} of $\norm{\bvec{U},h}{{\cdot}}$ along with that of the corresponding dual norm of $\mathcal E_{h,1}(\bvec{u},p;\cdot)$ leads to
  \begin{equation}\label{eq:stab.2}
    \begin{aligned}
      \chi\norm{\DIV,h}{\uCh{k}\uvec{e}_h}^2
      &\le \norm{\bvec{U},h,*}{\mathcal E_{h,1}(\bvec{u},p;\cdot)}
      \left(\norm{\CURL,h}{\uvec{v}_h}^2+\norm{\DIV,h}{\uCh{k}\uvec{v}_h}^2\right)^{\frac12}
      \\
      &\quad
      + \norm{\bvec{L}^4(\Omega)}{\cCh{k}\Icurl{k}{h}\bvec{u}}\norm{\bvec{L}^2(\Omega)}{\Pcurl{k}{h}\uvec{w}_h^\sharp}\norm{\bvec{L}^4(\Omega)}{\Pcurl{k}{h}\uvec{v}_h}
      \\
      &\lesssim
      \norm{\bvec{U},h,*}{\mathcal E_{h,1}(\bvec{u},p;\cdot)}\norm{\DIV,h}{\uCh{k}\uvec{e}_h}
      +\Csobh\seminorm{\bvec{W}^{1,4}(\Omega)}{\bvec{u}}
      \norm{\CURL,h}{\uvec{w}_h^\sharp}\norm{\DIV,h}{\uCh{k}\uvec{e}_h},
    \end{aligned}
  \end{equation}
  where the second inequality follows from \eqref{def:CP} together with $\uCh{k}\uvec{v}_h=\uCh{k}\uvec{e}_h$, \eqref{eq:bound.dI} (with $m=0$) below, \eqref{def:CcPcurl}, and \eqref{def:Csobh}.

  We then use \eqref{eq:def.Eh.p} and recall that $\Icurl{k}{h}\bvec{u}=\uvec{w}_h^\sharp+\uvec{w}_h^\perp$ with $\uvec{w}_h^\perp\in(\Image\uGh{k})^\perp$ to write
  \[
  (\uvec{w}_h^\sharp,\uGh{k}\underline{q}_h)_{\CURL,h}
  = (\Icurl{k}{h}\bvec{u},\uGh{k}\underline{q}_h)_{\CURL,h}\le
  \norm{\bvec{G},h,*}{\mathcal E_{h,2}(\bvec{u},p;\cdot)}\norm{\CURL,h}{\uGh{k}\underline{q}_h}\quad\forall\underline{q}_h\in\Xgrads{k}{h}.
  \]
  Taking the supremum over all $\underline{q}_h\in\Xgrads{k}{h}$ such that $\norm{\CURL,h}{\uGh{k}\underline{q}_h}\le 1$ and recalling that $\uvec{w}_h^\sharp\in\Image\uGh{k}=\uGh{k}(\Xgrads{k}{h})$ leads to 
  \begin{equation}\label{eq:bound.wsharp}
    \norm{\CURL,h}{\uvec{w}_h^\sharp}\le \norm{\bvec{G},h,*}{\mathcal E_{h,2}(\bvec{u},p;\cdot)}.
  \end{equation}
  Plugging this bound into \eqref{eq:stab.2}, recalling the definition \eqref{eq:def.chi} of $\chi$, and simplifying by $\norm{\DIV,h}{\uCh{k}\uvec{e}_h}$ leads to
  \[
  \chi\norm{\DIV,h}{\uCh{k}\uvec{e}_h}
  \lesssim \norm{\bvec{U},h,*}{\mathcal E_{h,1}(\bvec{u},p;\cdot))}
  + \Csobh \seminorm{\bvec{W}^{1,4}(\Omega)}{\bvec{u}}
  \norm{\bvec{G},h,*}{\mathcal E_{h,2}(\bvec{u},p;\cdot)}.
  \]
  Invoking the consistency error bounds \eqref{eq:est.Eh.u} and \eqref{eq:est.Eh.p} concludes the proof of the
  bound on $\uCh{k}\uvec{e}_h=\uCh{k}(\uvec{u}_h-\Icurl{k}{h}\bvec{u})$ stated in \eqref{eq:error.estimate.u}.

  To bound $\uvec{e}_h=\uvec{u}_h-\Icurl{k}{h}\bvec{u}=\uvec{v}_h-\uvec{w}_h^\sharp$ itself, we use \eqref{eq:bound.wsharp} together with the Poincar\'e inequality \eqref{def:CP} on $\uvec{v}_h\in(\Image\uGh{k})^\perp$ and $\uCh{k}\uvec{e}_h=\uCh{k}\uvec{v}_h$ to write
  \[
  \norm{\CURL,h}{\uvec{e}_h}\lesssim
  \norm{\DIV,h}{\uCh{k}\uvec{e}_h}+\norm{\bvec{G},h,*}{\mathcal E_{h,2}(\bvec{u},p;\cdot)}
  \]
  and conclude with \eqref{eq:est.Eh.p} and the bound already established on $\norm{\DIV,h}{\uCh{k}\uvec{e}_h}$.

  \smallskip

  It remains to bound the error $\underline{\epsilon}_h$ on the pressure. Take $\underline{q}_h\in\Xgrads{k}{h}$ and plug $\uvec{v}_h=\uGh{k}\underline{q}_h$ into \eqref{eq:error.bound.E1}. The complex property $\uCh{k}\circ\uGh{k}=\uvec{0}$, generalised H\"older inequalities, \eqref{eq:cCh}, the bounds \eqref{eq:bound.P} on the potential reconstruction and \eqref{eq:bound.dI} (with $m=0$) on the composition of the discrete curl and the interpolator, and the definitions \eqref{def:Csobh} of $\Csobh$ and \eqref{eq:def.Ls.Xcurl} of $\norm{4,\CURL,h}{{\cdot}}$ yield
  \begin{align*}
    (\uGh{k}\underline{\epsilon}_h,\uGh{k}\underline{q}_h)_{\CURL,h}\lesssim{}&
    \left(
    \Csobh\norm{\DIV,h}{\uCh{k}\uvec{e}_h}\norm{\DIV,h}{\uCh{k}\uvec{u}_h}
    + \seminorm{\bvec{W}^{1,4}(\Th)}{\bvec{u}}\norm{\CURL,h}{\uvec{e}_h}
    \right)\norm{4,\CURL,h}{\uGh{k}\underline{q}_h}\\
    &+\mathcal E_{h,1}(\bvec{u},p;\uGh{k}\underline{q}_h).
  \end{align*}
  The estimate \eqref{eq:error.estimate.p} then follows from the definition of $\norm{P,h}{{\cdot}}$, the error bound \eqref{eq:error.estimate.u}, the a priori bound \eqref{eq:est.u.robust}, the consistency estimate \eqref{eq:est.Eh.u}, and the estimate \eqref{eq:est.G.4.2} which, combined with a Cauchy--Schwarz inequality on $\sum_{T\in\Th}$, implies $\norm{\CURL,h}{\uGh{k}\underline{q}_h}\lesssim \norm{4,\CURL,h}{\uGh{k}\underline{q}_h}$.

  
\section{Essential boundary conditions}\label{sec:essential.BC}

The $\CURL\CURL$ formulation \eqref{eq:momentum.incompressibility} of the Navier--Stokes equations can also be considered with the following essential boundary conditions:
\begin{equation}\label{eq:essential.BCs}
\bvec{u}\times\normal = \bvec{0} \quad\mbox{ and }\quad p=0\quad\mbox{ on $\partial\Omega$}.
\end{equation}
Due to the boundary condition on the pressure, fixing its integral is not required here to ensure its uniqueness.
The spaces embedding these boundary conditions are 
\[
\Hcurlz{\Omega}=\left\{
\bvec{v}\in\Hcurl{\Omega}\,:\,\bvec{v}\times\normal=\bvec{0}\mbox{ on $\partial\Omega$}
\right\}\,,
\quad
H^1_0(\Omega)=\left\{
q\in H^1(\Omega)\,:\,q=0\mbox{ on $\partial\Omega$}
\right\}.
\]
The traces appearing here are well-defined (weakly for functions in $\Hcurl{\Omega}$, strongly for functions in $H^1(\Omega)$), see \cite{Assous.Ciarlet.ea:18}. The weak formulation of problem \eqref{eq:momentum.incompressibility} completed with the essential boundary conditions \eqref{eq:essential.BCs} is then:
\begin{equation}\label{eq:weak.essential}
  \begin{aligned}
    &\text{Find $(\bvec{u},p)\in \Hcurlz{\Omega}\times H^1_0(\Omega)$ such that, for all $(\bvec{v},q)\in \Hcurlz{\Omega}\times H^1_0(\Omega)$,}
    \\
    &\nu\int_\Omega\CURL\bvec{u}\cdot\CURL\bvec{v} + \int_\Omega [\CURL \bvec{u}\times \bvec{u}]\cdot\bvec{v}+\int_\Omega\GRAD p\cdot\bvec{v} = \int_\Omega\bvec{f}\cdot\bvec{v},
    \\
    &-\int_\Omega\bvec{u}\cdot\GRAD q = 0.
  \end{aligned}
\end{equation}

The SDDR spaces with essential boundary conditions, which replace $H^1_0(\Omega)$ and $\Hcurlz{\Omega}$ at the discrete level, are
\begin{align*}
  \Xgradz{k}{h}\coloneq{}&
  \big\{
    \begin{aligned}[t]
      \underline{q}_h\in\Xgrad{k}{h}\,\st\,
      q_F=0\quad\forall F\in\Fhb\,,\quad
      q_E=0\quad\forall E\in\Ehb\,,\quad q_V=0\quad\forall V\in\Vhb
    \big\},
    \end{aligned}\\
  \Xcurlz{k}{h}\coloneq{}&
  \big\{
  \begin{aligned}[t]
    \uvec{v}_h\in\Xcurl{k}{h}\,\st\,\bvec{v}_{\cvec{R},F}=\bvec{v}_{\cvec{R},F}^\compl=\bvec{0}\quad\forall F\in\Fhb\,,\quad
    v_E=0\quad\forall E\in\Ehb
    \big\},
  \end{aligned}
\end{align*}
where $\Fhb$ is the set of boundary faces, $\Ehb$ is the set of boundary edges, and $\Vhb$ is the set of boundary vertices. The SDDR scheme for \eqref{eq:weak.essential} is: Find $(\uvec{u}_h,\underline{p}_h)\in \Xcurlz{k}{h}\times \Xgradz{k}{h}$ such that \eqref{eq:ddr} holds for all $(\uvec{v}_h,\underline{q}_h)\in \Xcurlz{k}{h}\times \Xgradz{k}{h}$.

We can easily check that, for any $\underline{q}_h\in\Xgradz{k}{h}$, $\uGh{k}\underline{q}_h\in\Xcurlz{k}{h}$. As a consequence, $\uvec{v}_h=\uGh{k}\underline{p}_h$ is a valid test function in the scheme and its analysis can be carried
out as the analysis of the scheme \eqref{eq:ddr} for the natural boundary conditions, re-defining $\Csob$ by replacing in \eqref{def:Csobh} the spaces $\Xcurl{k}{h},\Xgrads{k}{h}$ by $\Xcurlz{k}{h},\Xgradz{k}{h}$.

\begin{remark}[Mixed boundary conditions]
We can also consider mixed boundary conditions, imposing natural boundary conditions (that is, $\CURL\bvec{u}\times\normal$ and $\bvec{u}\cdot\normal$) on a subset $\Gamma_D$ of $\partial\Omega$ and essential boundary conditions (that is, $\bvec{u}\times\normal$ and $p$) on the rest of $\partial \Omega$.
The exactness of the de Rham (and thus the DDR) complex for those -- and thus the well-posedness of the continuous and discrete models --, however, depends on the topology of $\Gamma_D$ \cite{Licht:19,Goldshtein.Mitrea.ea:11}.
\end{remark}


\section{Numerical tests}\label{sec:num.tests}

The SDDR scheme \eqref{eq:ddr} has been implemented in the open source C++ library \texttt{HArDCore3D} (see \url{https://github.com/jdroniou/HArDCore}). Static condensation was used to reduce the size of the globally coupled system, which was then solved using the \texttt{Intel MKL PARDISO} library (see \url{https://software.intel.com/en-us/mkl})~\cite{Alappat.Basermann.ea:20}. 

\subsection{Analytical solution}\label{sec:analytical.solution}

We first test the SDDR scheme \eqref{eq:ddr} by selecting the same analytical solution as in \cite{Beirao-da-Veiga.Dassi.ea:22}, namely
\[
p(x,y,z)=\lambda\sin(2\pi x)\sin(2\pi y)\sin(2\pi z)\quad\mbox{ and }\quad
\bvec{u}(x,y,z)=\begin{bmatrix}
  \frac12 \sin(2\pi x)\cos(2\pi y)\cos(2\pi z)\\[.5em]
  \frac12 \cos(2\pi x)\sin(2\pi y)\cos(2\pi z)\\[.5em]
  -\cos(2\pi x)\cos(2\pi y)\sin(2\pi z)
  \end{bmatrix},
\]
with Reynolds number $1$ and forcing term $\bvec{f}$ selected accordingly.
Here, $\lambda$ is a parameter that we can select to demonstrate the robustness
of the scheme with respect to the pressure magnitude. We run the scheme on two families of meshes of $\Omega\coloneq (0,1)^3$: tetrahedral and Voronoi (see \cite[Fig.~1]{Beirao-da-Veiga.Dassi.ea:22}), and measure two types of errors on the velocity and pressure:
the discrete errors (corresponding to the left-hand sides in the error estimates \eqref{eq:error.estimate.u} and \eqref{eq:error.estimate.p})
\[
E^{\rm d}_{\bvec{u}}\coloneq \norm{\bvec{U},h}{\uvec{u}_h-\Icurl{k}{h}\bvec{u}}
\,,\quad
E^{\rm d}_{p}\coloneq\norm{\CURL,h}{\uGh{k}(\underline{p}_h-\Igrad{k}{h}p)},
\]
and the potential-based errors
\[
E^{\rm p}_{\bvec{u}}\coloneq\Big(\norm{\bvec{L}^2(\Omega)}{\Pcurl{k}{h}\uvec{u}_h-\bvec{u}}^2+\norm{\bvec{L}^2(\Omega)}{\Pdiv{k}{h}\uCh{k}\uvec{u}_h-\CURL\bvec{u}}^2\Big)^{\frac12}
\,,\quad
E^{\rm p}_{p}\coloneq\norm{\bvec{L}^2(\Omega)}{\Pcurl{k}{h}\uGh{k}\underline{p}_h-\GRAD p},
\]
where we remind the reader that $\Pcurl{k}{h}$ and $\Pdiv{k}{h}$ are obtained patching the corresponding local potentials.

The loglog graphs of the errors vs.~$h$ are presented in Figures \ref{fig:conv.tetra} and \ref{fig:conv.voro}.
The errors estimates \eqref{eq:error.estimate.u} and \eqref{eq:error.estimate.p} are independent of the pressure magnitude, and this is reflected in 
these figures by the fact that the discrete errors on the velocity and the pressure are unaffected by a large increase in the scaling factor
$\lambda$. Using \cite[Theorem 6]{Di-Pietro.Droniou:23}, it can be checked that the continuous errors for the velocity and pressure, respectively, are 
bounded above by the sum of the respective discrete error and an approximation error term depending on the derivative of the corresponding unknowns.
The graphs also reflect this: the potential-based error for the velocity remains unaffected by a change of magnitude of the pressure,
while that on the pressure is, as expected, degraded by an increase of $\lambda$ (but the relative errors remains similar for both
values of $\lambda$).
Overall, the rates of convergence in these tests follow the prediction of Theorem \ref{thm:convergence}, except for the error on $\nabla p$ for $k=0$ which is a bit below 1 (this could be due to the dependence of the error on the mesh regularity factor --which is not uniform on these meshes-- or to the asymptotic rate not being reached yet for this low-order approximation).

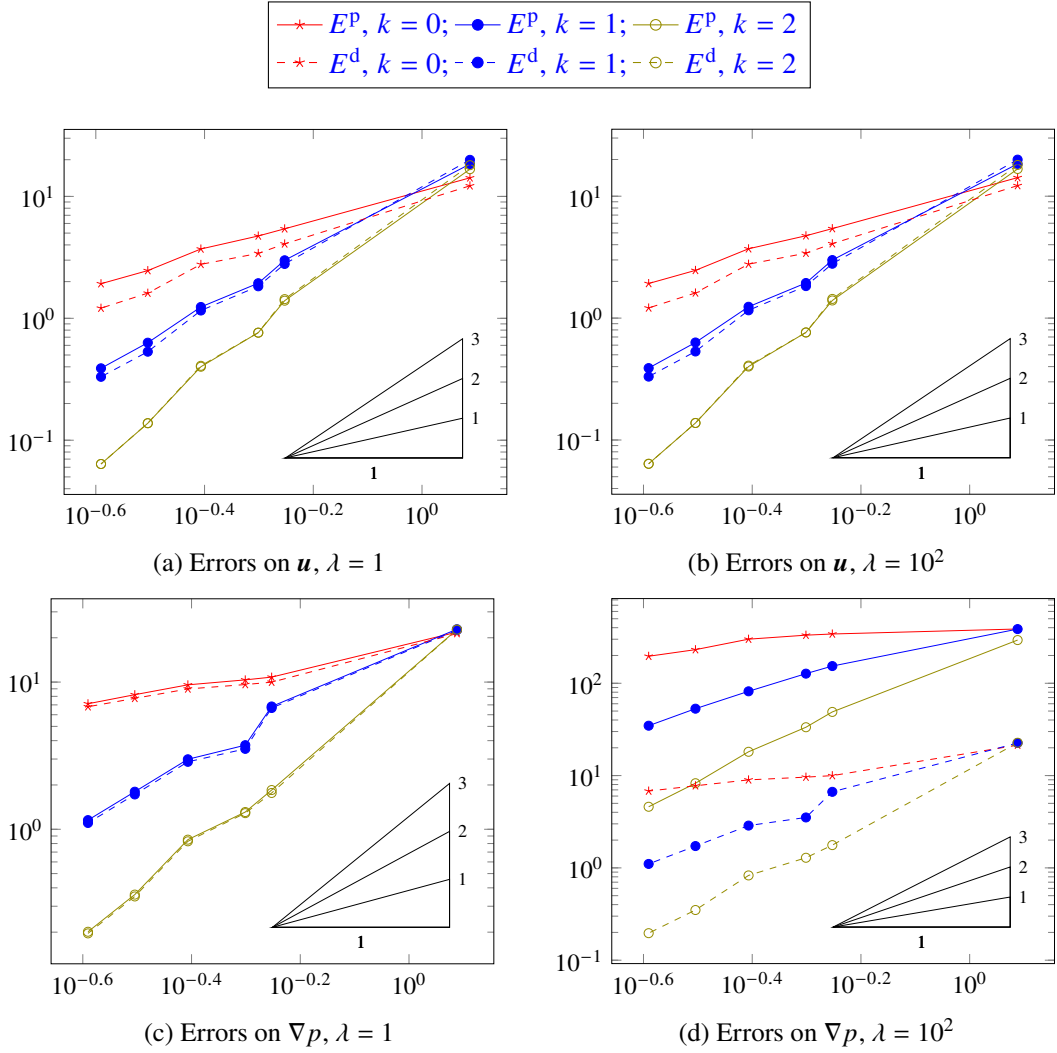
\begin{figure}\centering
  \ref{ddr.conv.tets}
  \vspace{0.50cm}\\
  \begin{minipage}{0.45\textwidth}
    \begin{tikzpicture}[scale=0.85]
      \begin{loglogaxis} [legend columns=3, legend to name=ddr.conv.tets]  
        \logLogSlopeTriangle{0.90}{0.4}{0.1}{1}{black};
        \addplot [mark=star, red] table[x=MeshSize,y=absE_cHcurlVel] {outputs/trigo/Tetgen-Cube-0_k0_Re1_Psca1/data_rates.dat};
        \addlegendentry{$E^{\rm p}$, $k=0$;}
        \logLogSlopeTriangle{0.90}{0.4}{0.1}{2}{black};
        \addplot [mark=*, blue] table[x=MeshSize,y=absE_cHcurlVel] {outputs/trigo/Tetgen-Cube-0_k1_Re1_Psca1/data_rates.dat};
        \addlegendentry{$E^{\rm p}$, $k=1$;}
        \logLogSlopeTriangle{0.90}{0.4}{0.1}{3}{black};
        \addplot [mark=o, olive] table[x=MeshSize,y=absE_cHcurlVel] {outputs/trigo/Tetgen-Cube-0_k2_Re1_Psca1/data_rates.dat};
        \addlegendentry{$E^{\rm p}$, $k=2$}
        \addplot [mark=star, mark options=solid, red, dashed] table[x=MeshSize,y=absE_HcurlVel] {outputs/trigo/Tetgen-Cube-0_k0_Re1_Psca1/data_rates.dat};
        \addlegendentry{$E^{\rm d}$, $k=0$;}
        \addplot [mark=*, mark options=solid, blue, dashed] table[x=MeshSize,y=absE_HcurlVel] {outputs/trigo/Tetgen-Cube-0_k1_Re1_Psca1/data_rates.dat};
        \addlegendentry{$E^{\rm d}$, $k=1$;}
        \addplot [mark=o, mark options=solid, olive, dashed] table[x=MeshSize,y=absE_HcurlVel] {outputs/trigo/Tetgen-Cube-0_k2_Re1_Psca1/data_rates.dat};
        \addlegendentry{$E^{\rm d}$, $k=2$}
      \end{loglogaxis}            
    \end{tikzpicture}
    \subcaption{Errors on $\bvec{u}$, $\lambda=1$}
  \end{minipage}
  \begin{minipage}{0.45\textwidth}
    \begin{tikzpicture}[scale=0.85]
      \begin{loglogaxis} 
        \logLogSlopeTriangle{0.90}{0.4}{0.1}{1}{black};
        \addplot [mark=star, red] table[x=MeshSize,y=absE_cHcurlVel] {outputs/trigo/Tetgen-Cube-0_k0_Re1_Psca1e2/data_rates.dat};
        \logLogSlopeTriangle{0.90}{0.4}{0.1}{2}{black};
        \addplot [mark=*, blue] table[x=MeshSize,y=absE_cHcurlVel] {outputs/trigo/Tetgen-Cube-0_k1_Re1_Psca1e2/data_rates.dat};
        \logLogSlopeTriangle{0.90}{0.4}{0.1}{3}{black};
        \addplot [mark=o, olive] table[x=MeshSize,y=absE_cHcurlVel] {outputs/trigo/Tetgen-Cube-0_k2_Re1_Psca1e2/data_rates.dat};
        \addplot [mark=star, mark options=solid, red, dashed] table[x=MeshSize,y=absE_HcurlVel] {outputs/trigo/Tetgen-Cube-0_k0_Re1_Psca1e2/data_rates.dat};
        \addplot [mark=*, mark options=solid, blue, dashed] table[x=MeshSize,y=absE_HcurlVel] {outputs/trigo/Tetgen-Cube-0_k1_Re1_Psca1e2/data_rates.dat};
        \addplot [mark=o, mark options=solid, olive, dashed] table[x=MeshSize,y=absE_HcurlVel] {outputs/trigo/Tetgen-Cube-0_k2_Re1_Psca1e2/data_rates.dat};
      \end{loglogaxis}            
    \end{tikzpicture}
    \subcaption{Errors on $\bvec{u}$, $\lambda=10^2$}
  \end{minipage}
  \\[0.5em]
  \begin{minipage}{0.45\textwidth}
    \begin{tikzpicture}[scale=0.85]
      \begin{loglogaxis} 
        \logLogSlopeTriangle{0.90}{0.4}{0.1}{1}{black};
        \addplot [mark=star, red] table[x=MeshSize,y=absE_cL2GradPre] {outputs/trigo/Tetgen-Cube-0_k0_Re1_Psca1/data_rates.dat};
        \logLogSlopeTriangle{0.90}{0.4}{0.1}{2}{black};
        \addplot [mark=*, blue] table[x=MeshSize,y=absE_cL2GradPre] {outputs/trigo/Tetgen-Cube-0_k1_Re1_Psca1/data_rates.dat};
        \logLogSlopeTriangle{0.90}{0.4}{0.1}{3}{black};
        \addplot [mark=o, olive] table[x=MeshSize,y=absE_cL2GradPre] {outputs/trigo/Tetgen-Cube-0_k2_Re1_Psca1/data_rates.dat};
        \addplot [mark=star, mark options=solid, red, dashed] table[x=MeshSize,y=absE_L2GradPre] {outputs/trigo/Tetgen-Cube-0_k0_Re1_Psca1/data_rates.dat};
        \addplot [mark=*, mark options=solid, blue, dashed] table[x=MeshSize,y=absE_L2GradPre] {outputs/trigo/Tetgen-Cube-0_k1_Re1_Psca1/data_rates.dat};
        \addplot [mark=o, mark options=solid, olive, dashed] table[x=MeshSize,y=absE_L2GradPre] {outputs/trigo/Tetgen-Cube-0_k2_Re1_Psca1/data_rates.dat};
      \end{loglogaxis}            
    \end{tikzpicture}
    \subcaption{Errors on $\nabla p$, $\lambda=1$}
  \end{minipage}
  \begin{minipage}{0.45\textwidth}
    \begin{tikzpicture}[scale=0.85]
      \begin{loglogaxis} 
        \logLogSlopeTriangle{0.90}{0.4}{0.1}{1}{black};
        \addplot [mark=star, red] table[x=MeshSize,y=absE_cL2GradPre] {outputs/trigo/Tetgen-Cube-0_k0_Re1_Psca1e2/data_rates.dat};
        \logLogSlopeTriangle{0.90}{0.4}{0.1}{2}{black};
        \addplot [mark=*, blue] table[x=MeshSize,y=absE_cL2GradPre] {outputs/trigo/Tetgen-Cube-0_k1_Re1_Psca1e2/data_rates.dat};
        \logLogSlopeTriangle{0.90}{0.4}{0.1}{3}{black};
        \addplot [mark=o, olive] table[x=MeshSize,y=absE_cL2GradPre] {outputs/trigo/Tetgen-Cube-0_k2_Re1_Psca1e2/data_rates.dat};
        \addplot [mark=star, mark options=solid, red, dashed] table[x=MeshSize,y=absE_L2GradPre] {outputs/trigo/Tetgen-Cube-0_k0_Re1_Psca1e2/data_rates.dat};
        \addplot [mark=*, mark options=solid, blue, dashed] table[x=MeshSize,y=absE_L2GradPre] {outputs/trigo/Tetgen-Cube-0_k1_Re1_Psca1e2/data_rates.dat};
        \addplot [mark=o, mark options=solid, olive, dashed] table[x=MeshSize,y=absE_L2GradPre] {outputs/trigo/Tetgen-Cube-0_k2_Re1_Psca1e2/data_rates.dat};
      \end{loglogaxis}            
    \end{tikzpicture}
    \subcaption{Errors on $\nabla p$, $\lambda=10^2$}
  \end{minipage}
  \caption{Analytical test of Section \ref{sec:analytical.solution}, Tetrahedral meshes: errors with respect to  $h$}
  \label{fig:conv.tetra}
\end{figure}

\begin{figure}\centering
  \ref{ddr.conv.voro}
  \vspace{0.50cm}\\
  \begin{minipage}{0.45\textwidth}
    \begin{tikzpicture}[scale=0.85]
      \begin{loglogaxis} [legend columns=3, legend to name=ddr.conv.voro]  
        \logLogSlopeTriangle{0.90}{0.4}{0.1}{1}{black};
        \addplot [mark=star, red] table[x=MeshSize,y=absE_cHcurlVel] {outputs/trigo/Voro-small-0_k0_Re1_Psca1/data_rates.dat};
        \addlegendentry{$E^{\rm p}$, $k=0$;}
        \logLogSlopeTriangle{0.90}{0.4}{0.1}{2}{black};
        \addplot [mark=*, blue] table[x=MeshSize,y=absE_cHcurlVel] {outputs/trigo/Voro-small-0_k1_Re1_Psca1/data_rates.dat};
        \addlegendentry{$E^{\rm p}$, $k=1$;}
        \logLogSlopeTriangle{0.90}{0.4}{0.1}{3}{black};
        \addplot [mark=o, olive] table[x=MeshSize,y=absE_cHcurlVel] {outputs/trigo/Voro-small-0_k2_Re1_Psca1/data_rates.dat};
        \addlegendentry{$E^{\rm p}$, $k=2$}
        \addplot [mark=star, mark options=solid, red, dashed] table[x=MeshSize,y=absE_HcurlVel] {outputs/trigo/Voro-small-0_k0_Re1_Psca1/data_rates.dat};
        \addlegendentry{$E^{\rm d}$, $k=0$;}
        \addplot [mark=*, mark options=solid, blue, dashed] table[x=MeshSize,y=absE_HcurlVel] {outputs/trigo/Voro-small-0_k1_Re1_Psca1/data_rates.dat};
        \addlegendentry{$E^{\rm d}$, $k=1$;}
        \addplot [mark=o, mark options=solid, olive, dashed] table[x=MeshSize,y=absE_HcurlVel] {outputs/trigo/Voro-small-0_k2_Re1_Psca1/data_rates.dat};
        \addlegendentry{$E^{\rm d}$, $k=2$}
      \end{loglogaxis}            
    \end{tikzpicture}
    \subcaption{Errors on $\bvec{u}$, $\lambda=1$}
  \end{minipage}
  \begin{minipage}{0.45\textwidth}
    \begin{tikzpicture}[scale=0.85]
      \begin{loglogaxis} 
        \logLogSlopeTriangle{0.90}{0.4}{0.1}{1}{black};
        \addplot [mark=star, red] table[x=MeshSize,y=absE_cHcurlVel] {outputs/trigo/Voro-small-0_k0_Re1_Psca1e2/data_rates.dat};
        \logLogSlopeTriangle{0.90}{0.4}{0.1}{2}{black};
        \addplot [mark=*, blue] table[x=MeshSize,y=absE_cHcurlVel] {outputs/trigo/Voro-small-0_k1_Re1_Psca1e2/data_rates.dat};
        \logLogSlopeTriangle{0.90}{0.4}{0.1}{3}{black};
        \addplot [mark=o, olive] table[x=MeshSize,y=absE_cHcurlVel] {outputs/trigo/Voro-small-0_k2_Re1_Psca1e2/data_rates.dat};
        \addplot [mark=star, mark options=solid, red, dashed] table[x=MeshSize,y=absE_HcurlVel] {outputs/trigo/Voro-small-0_k0_Re1_Psca1e2/data_rates.dat};
        \addplot [mark=*, mark options=solid, blue, dashed] table[x=MeshSize,y=absE_HcurlVel] {outputs/trigo/Voro-small-0_k1_Re1_Psca1e2/data_rates.dat};
        \addplot [mark=o, mark options=solid, olive, dashed] table[x=MeshSize,y=absE_HcurlVel] {outputs/trigo/Voro-small-0_k2_Re1_Psca1e2/data_rates.dat};
      \end{loglogaxis}            
    \end{tikzpicture}
    \subcaption{Errors on $\bvec{u}$, $\lambda=10^2$}
  \end{minipage}
  \\[0.5em]
  \begin{minipage}{0.45\textwidth}
    \begin{tikzpicture}[scale=0.85]
      \begin{loglogaxis} 
        \logLogSlopeTriangle{0.90}{0.4}{0.1}{1}{black};
        \addplot [mark=star, red] table[x=MeshSize,y=absE_cL2GradPre] {outputs/trigo/Voro-small-0_k0_Re1_Psca1/data_rates.dat};
        \logLogSlopeTriangle{0.90}{0.4}{0.1}{2}{black};
        \addplot [mark=*, blue] table[x=MeshSize,y=absE_cL2GradPre] {outputs/trigo/Voro-small-0_k1_Re1_Psca1/data_rates.dat};
        \logLogSlopeTriangle{0.90}{0.4}{0.1}{3}{black};
        \addplot [mark=o, olive] table[x=MeshSize,y=absE_cL2GradPre] {outputs/trigo/Voro-small-0_k2_Re1_Psca1/data_rates.dat};
        \addplot [mark=star, mark options=solid, red, dashed] table[x=MeshSize,y=absE_L2GradPre] {outputs/trigo/Voro-small-0_k0_Re1_Psca1/data_rates.dat};
        \addplot [mark=*, mark options=solid, blue, dashed] table[x=MeshSize,y=absE_L2GradPre] {outputs/trigo/Voro-small-0_k1_Re1_Psca1/data_rates.dat};
        \addplot [mark=o, mark options=solid, olive, dashed] table[x=MeshSize,y=absE_L2GradPre] {outputs/trigo/Voro-small-0_k2_Re1_Psca1/data_rates.dat};
      \end{loglogaxis}            
    \end{tikzpicture}
    \subcaption{Errors on $\nabla p$, $\lambda=1$}
  \end{minipage}
  \begin{minipage}{0.45\textwidth}
    \begin{tikzpicture}[scale=0.85]
      \begin{loglogaxis} 
        \logLogSlopeTriangle{0.90}{0.4}{0.1}{1}{black};
        \addplot [mark=star, red] table[x=MeshSize,y=absE_cL2GradPre] {outputs/trigo/Voro-small-0_k0_Re1_Psca1e2/data_rates.dat};
        \logLogSlopeTriangle{0.90}{0.4}{0.1}{2}{black};
        \addplot [mark=*, blue] table[x=MeshSize,y=absE_cL2GradPre] {outputs/trigo/Voro-small-0_k1_Re1_Psca1e2/data_rates.dat};
        \logLogSlopeTriangle{0.90}{0.4}{0.1}{3}{black};
        \addplot [mark=o, olive] table[x=MeshSize,y=absE_cL2GradPre] {outputs/trigo/Voro-small-0_k2_Re1_Psca1e2/data_rates.dat};
        \addplot [mark=star, mark options=solid, red, dashed] table[x=MeshSize,y=absE_L2GradPre] {outputs/trigo/Voro-small-0_k0_Re1_Psca1e2/data_rates.dat};
        \addplot [mark=*, mark options=solid, blue, dashed] table[x=MeshSize,y=absE_L2GradPre] {outputs/trigo/Voro-small-0_k1_Re1_Psca1e2/data_rates.dat};
        \addplot [mark=o, mark options=solid, olive, dashed] table[x=MeshSize,y=absE_L2GradPre] {outputs/trigo/Voro-small-0_k2_Re1_Psca1e2/data_rates.dat};
      \end{loglogaxis}            
    \end{tikzpicture}
    \subcaption{Errors on $\nabla p$, $\lambda=10^2$}
  \end{minipage}
  \caption{Analytical test of Section \ref{sec:analytical.solution}, Voronoi meshes: errors with respect to  $h$}
  \label{fig:conv.voro}
\end{figure}
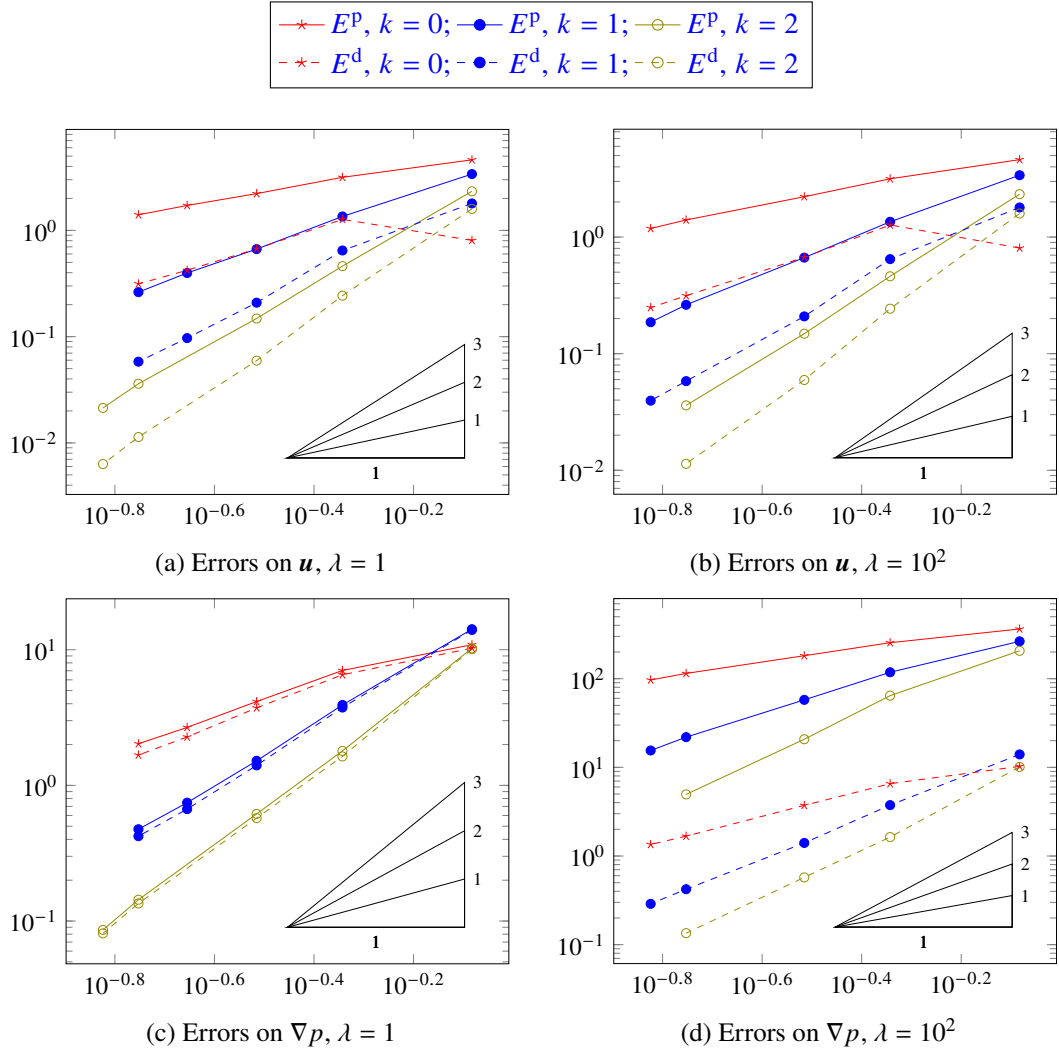

\subsection{Pressure--flux boundary conditions}\label{sec:pressflux}

The series of tests in this section is not based on a known analytical solution. Instead, still considering the unit cube $\Omega$ as domain, we impose mixed boundary conditions, enforcing the pressure on part of one side of the cube and the flux on part of the opposite side. Specifically, we take
\begin{itemize}
\item Essential boundary conditions $p(x,y,z)=-z$ and $\bvec{u}\times\normal=\bvec{0}$ on the bottom corner $\{0\}\times(0,0.25)\times(0,0.25)$ of the face $x=0$ of $\Omega$,
\item Natural boundary conditions $\CURL\bvec{u}\times \normal =\bvec{0}$ and $\bvec{u}\cdot\normal=1$ on the bottom corner $\{1\}\times(0,0.25)\times(0,0.25)$ of the face $x=1$ of $\Omega$,
\item Homogeneous natural boundary conditions $\CURL\bvec{u}\times \normal =\bvec{0}$ and $\bvec{u}\cdot\normal=0$ on the rest of the domain.
\end{itemize}
The essential boundary conditions that are imposed here are fully compatible with the spaces $H^1(\Omega)$ and $\Hcurl{\Omega}$ for the pressure and the velocity, and are therefore valid in a mixed boundary conditions version of \eqref{eq:weak} (since the natural boundary conditions do not require any regularity property on the spaces). We notice, however, that the boundary conditions on $\bvec{u}\cdot\normal$ (which are discontinuous along the face $x=1$) prevent $\bvec{u}$ from being in $\bvec{H}^1(\Omega)$, and that a weak formulation of the Laplacian-based model \eqref{eq:strong.Delta} would not allow us to impose such flux boundary conditions on the velocity (even disregarding the boundary conditions on the vorticity).

The meshes for these tests are Cartesian meshes made of $n^3$ cubes with $n\in\{4,8,16,32,48,64\}$, and we consider the SDDR degrees $k=0,1,2$; the finest mesh/highest degrees $(n,k)=(64,1)$, $(n,k)=(48,2)$ and $(n,k)=(64,2)$ are not shown due to the limitations of our (direct) solver. The tests are run with a Reynolds number of 100. The velocity streamlines and pressure obtained for $n=32$ and $k=2$ are presented in Figure \ref{fig:pressflux_picture}.

\begin{figure}\centering
  \begin{minipage}{0.45\textwidth}
    \includegraphics[width=\textwidth]{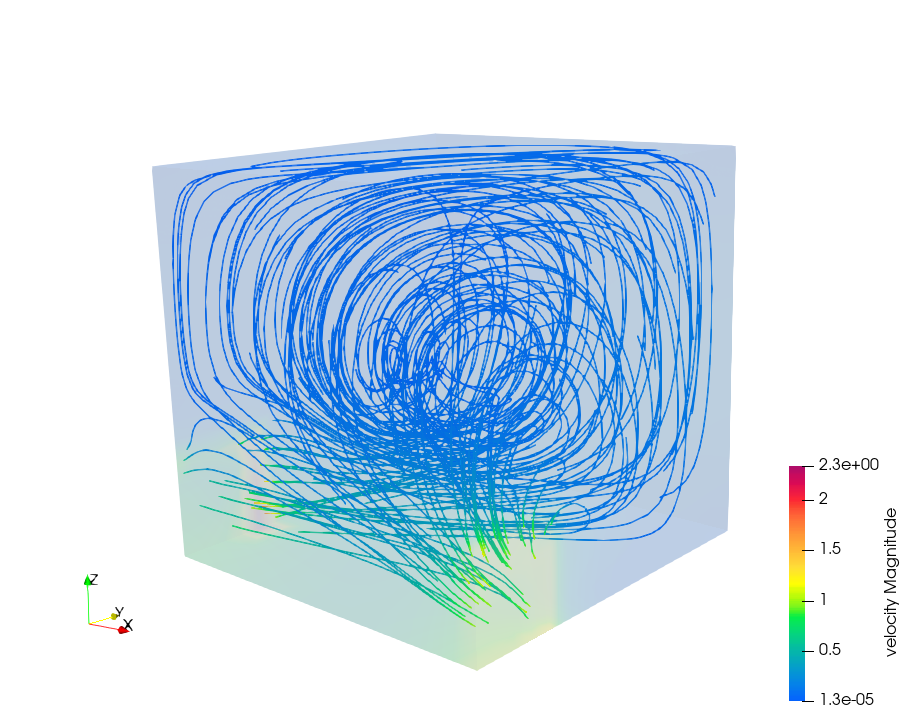}
    \subcaption{Velocity}
  \end{minipage}
  \hspace{0.25cm}
  \begin{minipage}{0.45\textwidth}
    \includegraphics[width=\textwidth]{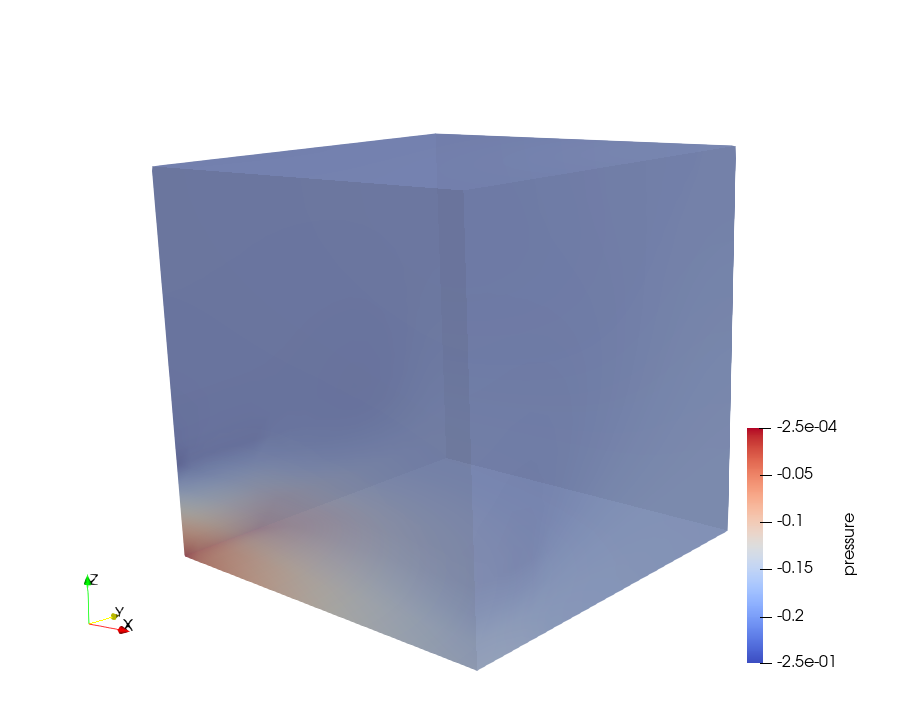}
    \subcaption{Pressure}
  \end{minipage}
  \caption{Velocity streamlines and pressure for the test in Section \ref{sec:pressflux}}
  \label{fig:pressflux_picture}
\end{figure}

Figure \ref{fig:pressflux_norms} presents the convergence, as $h$ is refined, of the discrete norms of the velocity and the pressure. In Figure \ref{fig:pressflux_convergence} we show the error between these norms, and reference values obtained with $n=32$ and $k=2$. We notice that the scheme for $k=0,1$ achieves the expected rate of convergence $k+1$, but that $k=2$ only provides a minor improvement over $k=1$, and certainly not a gain in terms of convergence rates. This is probably due to the lack of regularity, discussed above, of the exact velocity, which limits the benefit of increasing the polynomial degree beyond $k=1$. There is, however, a real gain, of about an order of magnitude, between $k=0$ and $k=1$, which indicates that even when the solution is not expected to be very smooth, increasing slightly the polynomial degree of the approximation can result in a real benefit in terms of accuracy vs.~cost (a similar conclusion was reached, for a different model and scheme, in \cite{Anderson.Droniou:18}).

\begin{figure}\centering
  \ref{ddr.pressflux.2}
  \vspace{0.50cm}\\
  \begin{minipage}{0.45\textwidth}
    \begin{tikzpicture}[scale=0.85]
      \begin{loglogaxis} [legend columns=3, legend to name=ddr.pressflux.2]  
        \addplot [mark=star, red] table[x=MeshSize,y=discN_HCurlVel] {outputs/pressflux/Cubic-Cells_k0_Re100_Psca1/norms.dat};
        \addlegendentry{$k=0$;}
        \addplot [mark=*, blue] table[x=MeshSize,y=discN_HCurlVel] {outputs/pressflux/Cubic-Cells_k1_Re100_Psca1/norms.dat};
        \addlegendentry{$k=1$;}
        \addplot [mark=o, olive] table[x=MeshSize,y=discN_HCurlVel] {outputs/pressflux/Cubic-Cells_k2_Re100_Psca1/norms.dat};
        \addlegendentry{$k=2$}
      \end{loglogaxis}            
    \end{tikzpicture}
    \subcaption{Discrete $\Hcurl{\Omega}$ norm on $\bvec{u}$}
  \end{minipage}
  \begin{minipage}{0.45\textwidth}
    \begin{tikzpicture}[scale=0.85]
      \begin{loglogaxis} 
        \addplot [mark=star, red] table[x=MeshSize,y=discN_HGradPre] {outputs/pressflux/Cubic-Cells_k0_Re100_Psca1/norms.dat};
        \addplot [mark=*, blue] table[x=MeshSize,y=discN_HGradPre] {outputs/pressflux/Cubic-Cells_k1_Re100_Psca1/norms.dat};
        \addplot [mark=o, olive] table[x=MeshSize,y=discN_HGradPre] {outputs/pressflux/Cubic-Cells_k2_Re100_Psca1/norms.dat};
      \end{loglogaxis}            
    \end{tikzpicture}
    \subcaption{Discrete $H^1(\Omega)$ norm on $p$}
  \end{minipage}
  \caption{Pressure--flux test of Section \ref{sec:pressflux}, discrete norms w.r.t.~$h$}
  \label{fig:pressflux_norms}
\end{figure}
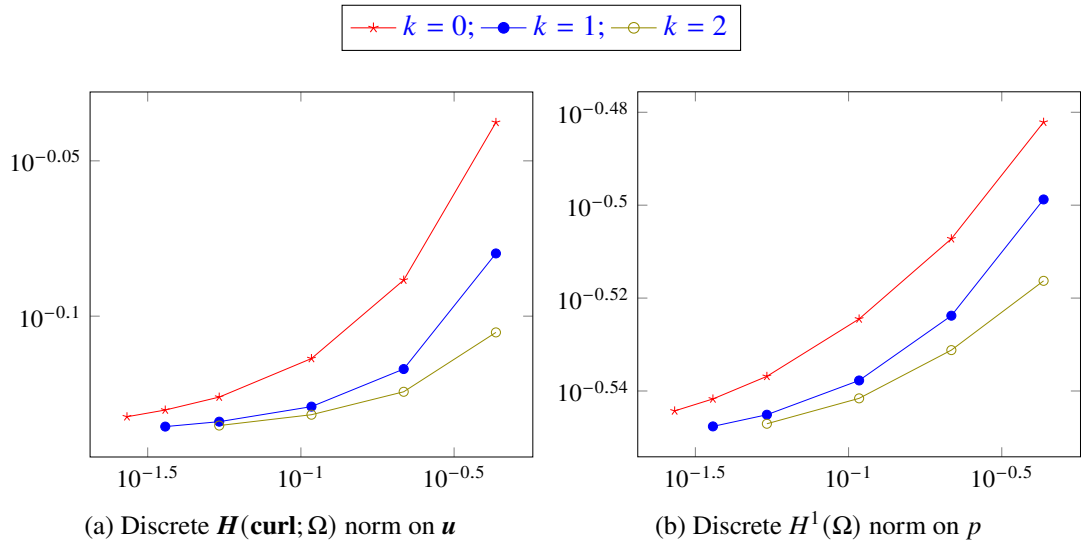

\def\refnormHCurlPscaOne{7.3256611669273153e-01}
\def\refnormHGradPscaOne{2.8368266709481171e-01}
\begin{figure}\centering
  \ref{ddr.pressflux}
  \vspace{0.50cm}\\
  \begin{minipage}{0.45\textwidth}
    \begin{tikzpicture}[scale=0.85]
      \begin{loglogaxis} [legend columns=3, legend to name=ddr.pressflux]  
        \logLogSlopeTriangle{0.90}{0.3}{0.1}{1}{black};
        \addplot [mark=star, red] table[x=MeshSize,y expr=abs(\thisrow{discN_HCurlVel}-\refnormHCurlPscaOne)] {outputs/pressflux/Cubic-Cells_k0_Re100_Psca1/norms.dat};
        \addlegendentry{$k=0$;}
        \logLogSlopeTriangle{0.90}{0.3}{0.1}{2}{black};
        \addplot [mark=*, blue] table[x=MeshSize,y expr=abs(\thisrow{discN_HCurlVel}-\refnormHCurlPscaOne)] {outputs/pressflux/Cubic-Cells_k1_Re100_Psca1/norms.dat};
        \addlegendentry{$k=1$;}
        \logLogSlopeTriangle{0.90}{0.3}{0.1}{3}{black};
        \addplot [mark=o, olive] table[x=MeshSize,y expr=abs(\thisrow{discN_HCurlVel}-\refnormHCurlPscaOne)] {outputs/pressflux/Cubic-Cells_k2_Re100_Psca1/norms.dat};
        \addlegendentry{$k=2$}
      \end{loglogaxis}            
    \end{tikzpicture}
    \subcaption{Errors on discrete $\Hcurl{\Omega}$ norm on $\bvec{u}$}
  \end{minipage}
  \begin{minipage}{0.45\textwidth}
    \begin{tikzpicture}[scale=0.85]
      \begin{loglogaxis} 
        \logLogSlopeTriangle{0.90}{0.3}{0.1}{1}{black};
        \addplot [mark=star, red] table[x=MeshSize,y expr=abs(\thisrow{discN_HGradPre}-\refnormHGradPscaOne)] {outputs/pressflux/Cubic-Cells_k0_Re100_Psca1/norms.dat};
        \logLogSlopeTriangle{0.90}{0.3}{0.1}{2}{black};
        \addplot [mark=*, blue] table[x=MeshSize,y expr=abs(\thisrow{discN_HGradPre}-\refnormHGradPscaOne)] {outputs/pressflux/Cubic-Cells_k1_Re100_Psca1/norms.dat};
        \logLogSlopeTriangle{0.90}{0.3}{0.1}{3}{black};
        \addplot [mark=o, olive] table[x=MeshSize,y expr=abs(\thisrow{discN_HGradPre}-\refnormHGradPscaOne)] {outputs/pressflux/Cubic-Cells_k2_Re100_Psca1/norms.dat};
      \end{loglogaxis}            
    \end{tikzpicture}
    \subcaption{Errors on discrete $H^1(\Omega)$ norm on $p$}
  \end{minipage}
  \caption{Pressure--flux test of Section \ref{sec:pressflux}, errors on discrete norms w.r.t.~$h$}
  \label{fig:pressflux_convergence}
\end{figure}
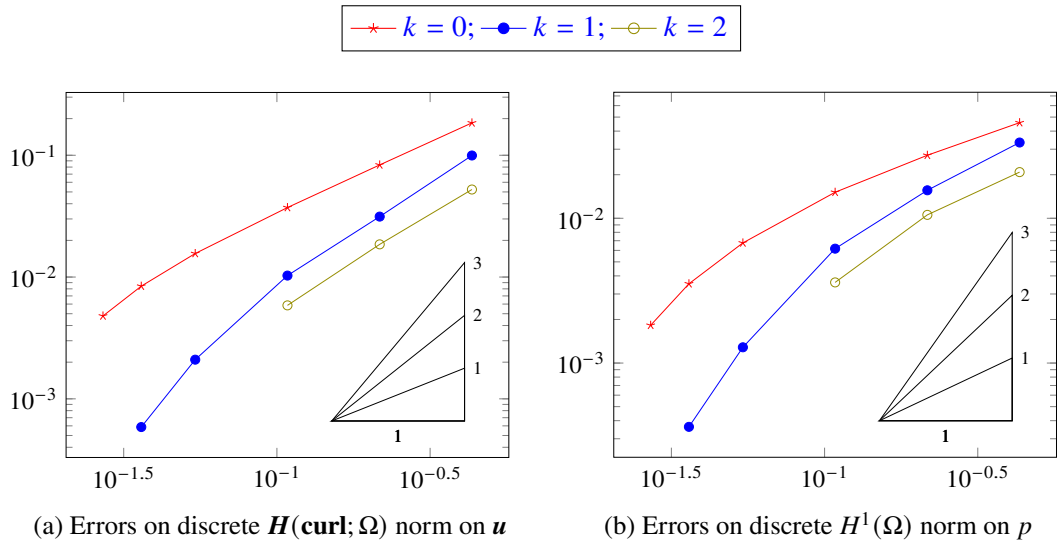


\appendix

\section{Bounds on potential reconstructions and discrete differential operators}\label{sec:bounds.pec}

We prove in this section some bounds on the discrete operators of the DDR and SDDR complex, that are used in the proof of the error estimates.

\subsection{Exterior calculus DDR complex}\label{sec:ddr.pec}

The exterior calculus DDR complex (ECDDR), introduced in \cite{Bonaldi.Di-Pietro.ea:23}, is a discretisation of the de Rham complex of differential forms on a domain $\Omega$ of any dimension $n$. When $n=3$, the spaces of 1 and 2-forms can be identified with vector fields (see \cite[Appendix A]{Bonaldi.Di-Pietro.ea:23}), and the continuous sequence becomes equivalent to the familiar vector de Rham complex. The ECDDR complex then reads
\begin{equation}\label{eq:pec.ddr.complex}
  \begin{tikzcd}
    \{0\}\arrow{r}{} & \Xsp{k}{0,h}\arrow{r}{\ued{k}{0,h}} & \Xsp{k}{1,h}\arrow{r}{\ued{k}{1,h}} &\Xsp{k}{2,h} \arrow{r}{\ued{k}{2,h}}& \Xsp{k}{3,h}\arrow{r}{} & \{0\}.
  \end{tikzcd}
\end{equation}
which, through vector proxies, can be identified with the standard vector DDR complex \cite{Di-Pietro.Droniou.ea:20,Di-Pietro.Droniou:23}
\begin{equation}\label{eq:ddr.complex}
  \begin{tikzcd}
    \{0\}\arrow{r}{}
    & \Xgrad{k}{h}\arrow{r}{\uGh{k}}
    & \Xcurl{k}{h}\arrow{r}{\uCh{k}}
    & \Xdiv{k}{h} \arrow{r}{\Dh{k}}
    & \Poly{k}(\Th)\arrow{r}{} & \{0\}.
  \end{tikzcd}
\end{equation}

The interest in adopting the exterior calculus point of view lies in the unified treatment of spaces and operators, no matter which part of the sequence they correspond to: the only difference between two related spaces/operators is the degree of the differential form in the definition, and that degree does not have any impact on the proof of certain key properties.
This is demonstrated in the rest of this section, where we establish some general results in the ECDDR framework.
Note that the results below are valid in any dimension and, when $d = 3$, they directly extend to the SDDR sequence through the bounds on the extension and reduction operators linking the DDR and SDDR sequences (see \cite{Di-Pietro.Droniou:23*1}).

We adopt the same notations as \cite[Section 3]{Bonaldi.Di-Pietro.ea:23} in the rest of this section, with the exception of swapping the placement of the polynomial degree $k$ and the degree of the differential form $l$ throughout the spaces and operators (notice that $k$ is used to indicate the degree of the differential form in \cite{Bonaldi.Di-Pietro.ea:23}).
Let us briefly describe these notations.

The set of $d$-cells of the mesh (e.g., edges if $d=1$) is $\Delta_d(\Mh)$, and the set of all cells is $\Delta(\Mh)=\bigcup_{d=0}^n\Delta_d(\Mh)$. If $f\in\Delta_d(\Mh)$ and $d'\le d$, we denote by $\Delta_{d'}(f)$ the set of $d'$-cells contained in $\overline{f}$. The discrete space of $l$-forms is
\[
 \Xsp{k}{l,h}\coloneq \bigtimes_{d=l}^n \bigtimes_{f\in \Delta_d(\Mh)}\Poly{k,-}\Lambda^{d-l}(f),
\]
where $\Poly{k,-}\Lambda^{d-l}(f)$ is the trimmed polynomial space of $(d-l)$-forms on $f$ \cite[Section 2.7]{Bonaldi.Di-Pietro.ea:23}. If $f\in\Delta_d(\Mh)$ and $d\geq l$, we denote by $\Xsp{k}{l,f}$ the restriction of $\Xsp{k}{l,h}$ to $f$, obtained by considering only the components on $f$ and its subcells.
The restriction of a vector $\underline{\omega}_h\in\Xsp{k}{l,h}$ is then $\underline{\omega}_f\in\Xsp{k}{l,f}$, and the particular polynomial component attached to $f$ is denoted by $\omega_f\in\Poly{k,-}\Lambda^{d-l}(f)$.
The interpolate of $\omega\in C^0\Lambda^l(\overline{f})$ onto $\Xsp{k}{l,f}$ is
\begin{equation}\label{eq:Iec}
  \Iec{k}{l,f}\omega\coloneq (\proj{k,-}{d'-l,f'}(\star\tr_{f'}\omega))_{f'\in\Delta_{d'}(f),\,d'\in[l,d]},
\end{equation}
where $\proj{k,-}{d'-l,f'}$ is the $L^2$-orthogonal projection on $\Poly{k,-}\Lambda^{d'-l}(f')$.

For each $f\in\Delta_d(\Mh)$ with $d\ge l$, we define the local potential reconstruction $\Pec{k}{l,f}:\Xsp{k}{l,f}\to \Poly{k}{}\Lambda^l(f)$ and, if $d\ge l+1$, a local discrete exterior derivative $\ded{k}{l,f}:\Xsp{k}{l,f}\to\Poly{k}{}\Lambda^{l+1}(f)$ recursively on the dimension $d$:
 \begin{itemize}[leftmargin=1em]
\item If $d=l$, we set 
\begin{equation}\label{eq:pec.base}
  \Pec{k}{l,f}\underline{\omega}_f \coloneq \star^{-1}\omega_f \in \Poly{k}{} \Lambda^d(f).
\end{equation}
\item
  If $l+1 \le d \le n$:
  \begin{enumerate}[leftmargin=1em]
  \item For all $\underline{\omega}_f\in\Xsp{k}{l,f}$, define $\ded{k}{l,f}\underline{\omega}_f\in\Poly{k}{}\Lambda^{l+1}(f)$ by
    \begin{multline}\label{eq:ded}
      \int_f \ded{k}{l,f}\underline{\omega}_f \wedge \mu
      = (-1)^{l+1} \int_f \star^{-1}\omega_f\wedge \ed\mu
      + \int_{\partial f} \Pec{k}{l,\partial f}\underline{\omega}_{\partial f} \wedge \tr_{\partial f}{\mu}\quad
      \forall \mu \in \Poly{k}{}\Lambda^{d-l-1}(f),
    \end{multline}
    where $\Pec{k}{l,\partial f}\underline{\omega}_{\partial f}$ is the piecewise polynomial form obtained patching $(\Pec{k}{l,f'}\underline{\omega}_{f'})_{f'\in\Delta_{d-1}(f)}$.
  \item Then, the discrete potential on the $d$-cell $f$ is given by: For all $\underline{\omega}_f\in\Xsp{k}{l,f}$,
    \begin{multline}\label{eq:Pec}
      (-1)^{l+1}\int_f \Pec{k}{l,f} \underline{\omega}_f \wedge (\ed\mu+\nu)\\
      = \int_f \ded{k}{l,f} \underline{\omega}_f \wedge \mu
      -  \int_{\partial f}  \Pec{k}{l,\partial f}\underline{\omega}_{\partial f} \wedge \tr_{\partial f}{\mu}
      + (-1)^{l+1}\int_f \star^{-1}\omega_f \wedge\nu
      \\
      \forall (\mu,\nu) \in \Koly{k+1}{d-l-1} \times \Koly{k}{d-l}(f),
    \end{multline}
    where $\Koly{a}{b}=\kappa\Poly{a-1}{}\Lambda^{b+1}(f)$ is the Koszul complement of $\ed\Poly{a+1}{}\Lambda^{b-1}(f)$ in $\Poly{a}{}\Lambda^b(f)$ ($\kappa$ is the Koszul derivative, that is, the contraction with the field $\bvec{x}-\bvec{x}_f$, where $\bvec{x}_f$ is a fixed point in $f$). 
  \end{enumerate}
\end{itemize}
The discrete derivatives in \eqref{eq:pec.ddr.complex} are obtained gathering the local discrete derivatives $\ded{k}{l,f}$ and projecting them onto the trimmed polynomial spaces of $\Xsp{k}{l+1,h}$. Through vector proxies, these definitions create the correspondences between the two diagrams \eqref{eq:pec.ddr.complex} and \eqref{eq:ddr.complex}: $l=0$ corresponds to the gradient space and discrete operator, $l=1$ to the curl space and discrete operator, and $l=2$ to the divergence space and discrete operator.

\subsection{Bounds on $\Pec{k}{l,f}$ and $\ded{k}{l,f}$}

Let $d$ be a natural number such that $0\leq d\leq n$, and $f\in\Delta_d(\mathcal{M}_h)$. For all natural numbers $l\in[0,d]$ and $s\in[1,\infty]$, define the component $L^s$-like norms on $\Xsp{k}{l,f}$ by:
\begin{equation}\label{eq:tnorm}
  \tnorm{s,l,f}{\underline{\omega}_f}\coloneq
  \sum_{f'\in\Delta_{d'}(f),d'\in[l,d]}h_{f'}^{\frac{d-d'}{s}}\norm{\Lpdf{s}{l}{f'}}{\star^{-1}\omega_{f'}}
  \qquad\forall \underline{\omega}_f\in\Xsp{k}{l,f}.
\end{equation}
Thanks to mesh regularity, this norm is equivalent uniformly in $h$ to the component norms defined in \cite[Section 4.5]{Di-Pietro.Droniou:23} when $s=2$; in passing, some of the bounds below for $s=2$ have already been established in vector proxy form in \cite[Proposition 6 and Lemma 6]{Di-Pietro.Droniou:23}.
 For future use, for any $s \in (1,\infty)$, we define the conjugate index $s'$ of $s$ such that
\begin{equation}\label{eq:conjugate index}
   \frac1s + \frac1{s'} = 1.
\end{equation}
This definition is extended to the case $s \in \{ 1, \infty \}$ setting $\frac1{\infty} \coloneq 0$ and $\frac10 \coloneq \infty$.

\begin{lemma}[Boundedness of the local discrete exterior derivative and potential]\label{lem:dP.bound}
  For any $f\in\Delta_d(\Mh)$ and $\underline{\omega}_f\in\Xsp{k}{l,f}$,
  \begin{align}\label{eq:bound.d}
    \norm{\Lpdf{s}{l+1}{f}}{\ded{k}{l,f}\underline{\omega}_f}{}&\lesssim h^{-1}_f\tnorm{s,l,f}{\underline{\omega}_f}\\\label{eq:bound.P}
    \norm{\Lpdf{s}{l}{f}}{\Pec{k}{l,f}\underline{\omega}_f}{}&\lesssim \tnorm{s,l,f}{\underline{\omega}_f}.
  \end{align}
\end{lemma}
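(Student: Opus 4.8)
The plan is to prove \eqref{eq:bound.d} and \eqref{eq:bound.P} together by induction on the dimension $d \ge l$ of the cell $f$, following the recursive structure of the definitions \eqref{eq:pec.base}, \eqref{eq:ded}, \eqref{eq:Pec}. The base case $d = l$ is immediate: there is no discrete derivative to bound, and by \eqref{eq:pec.base} one has $\Pec{k}{l,f}\underline{\omega}_f = \star^{-1}\omega_f$, while $\tnorm{s,l,f}{\underline{\omega}_f}$ reduces to $\norm{\Lpdf{s}{l}{f}}{\star^{-1}\omega_f}$ since the weight $h_{f'}^{(d-d')/s}$ in \eqref{eq:tnorm} equals $1$ when $d' = d$. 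In the inductive step we assume that both bounds hold on every mesh cell of dimension $< d$; applying this to the subcells of the cells in $\Delta_{d-1}(f)$, both bounds are then available on every proper subcell of $f$.

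Two scaling facts will be used throughout, with constants depending only on $k$, $n$ and the mesh regularity parameter (and, crucially, independent of $s$, by equivalence of $L^s$-norms on finite-dimensional polynomial spaces reduced to the case $s=\infty$). First, a \emph{polynomial duality estimate}: for every $j$ and every $\eta \in \Pkdf{k}{j}{f}$, using that $\int_f \eta \wedge \mu = \pm\int_f \langle\eta, \star\mu\rangle$, that $\star$ is a pointwise isometry bijecting $\Pkdf{k}{d-j}{f}$ onto $\Pkdf{k}{j}{f}$, and (by the polynomial Koszul decomposition of \cite{Bonaldi.Di-Pietro.ea:23}) that the test combinations $\ed\mu + \nu$ in \eqref{eq:Pec} span $\Pkdf{k}{d-l}{f}$, which $\star$ then bijects onto $\Pkdf{k}{l}{f}$,
\[
\norm{\Lpdf{s}{j}{f}}{\eta} \lesssim \sup\Big\{ {\textstyle\int_f}\, \eta \wedge \mu \,:\, \mu \in \Pkdf{k}{d-j}{f},\ \norm{\Lpdf{s'}{d-j}{f}}{\mu} \le 1 \Big\}.
\]
Second, the standard polynomial inverse and discrete trace inequalities $\norm{\Lpdf{s}{j+1}{f}}{\ed\mu} \lesssim h_f^{-1}\norm{\Lpdf{s}{j}{f}}{\mu}$ and $\norm{\Lpdf{s}{j}{f'}}{\tr_{f'}\mu} \lesssim h_f^{-1/s}\norm{\Lpdf{s}{j}{f}}{\mu}$ for $f' \in \Delta_{d-1}(f)$ (using $h_{f'} \simeq h_f$), the boundedness of the projections associated with $\Pkdf{k}{d-l}{f} = \ed\Koly{k+1}{d-l-1}(f) \oplus \Koly{k}{d-l}(f)$, and the bound $\norm{\Lpdf{s}{d-l-1}{f}}{\mu} \lesssim h_f\norm{\Lpdf{s}{d-l}{f}}{\ed\mu}$ for $\mu \in \Koly{k+1}{d-l-1}(f)$ (injectivity of $\ed$ on the Koszul complement). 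From \eqref{eq:tnorm} and mesh regularity I also record $\norm{\Lpdf{s}{l}{f}}{\star^{-1}\omega_f} \le \tnorm{s,l,f}{\underline{\omega}_f}$ and $\sum_{f' \in \Delta_{d-1}(f)} \tnorm{s,l,f'}{\underline{\omega}_{f'}} \lesssim h_f^{-1/s}\tnorm{s,l,f}{\underline{\omega}_f}$.

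To establish \eqref{eq:bound.d}, I test \eqref{eq:ded} against an arbitrary $\mu \in \Pkdf{k}{d-l-1}{f}$ and bound the two terms on the right by Hölder's inequality: the volume term via the inverse inequality and $\norm{\Lpdf{s}{l}{f}}{\star^{-1}\omega_f} \le \tnorm{s,l,f}{\underline{\omega}_f}$, and the boundary term via the induction hypothesis \eqref{eq:bound.P} on each $f' \in \Delta_{d-1}(f)$, the trace inequality, and the sum bound above; both are $\lesssim h_f^{-1}\tnorm{s,l,f}{\underline{\omega}_f}\,\norm{\Lpdf{s'}{d-l-1}{f}}{\mu}$, and the polynomial duality estimate then yields \eqref{eq:bound.d}. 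For \eqref{eq:bound.P}, I test \eqref{eq:Pec} against $(\mu,\nu) \in \Koly{k+1}{d-l-1}(f) \times \Koly{k}{d-l}(f)$ and bound each of the three terms on its right by $\tnorm{s,l,f}{\underline{\omega}_f}\,\norm{\Lpdf{s'}{d-l}{f}}{\ed\mu + \nu}$: the first using \eqref{eq:bound.d} just proved together with $\norm{\Lpdf{s'}{d-l-1}{f}}{\mu} \lesssim h_f\norm{\Lpdf{s'}{d-l}{f}}{\ed\mu}$ and boundedness of the projection onto $\ed\Koly{k+1}{d-l-1}(f)$; the second using the induction hypothesis \eqref{eq:bound.P} on $\Delta_{d-1}(f)$, the trace inequality, the sum bound, and the same control of $\mu$; and the third using $\norm{\Lpdf{s}{l}{f}}{\star^{-1}\omega_f} \le \tnorm{s,l,f}{\underline{\omega}_f}$ and boundedness of the projection onto $\Koly{k}{d-l}(f)$. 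In every case the powers of $h_f$ combine to exponent $0$ because $\tfrac1s + \tfrac1{s'} = 1$. Since $\ed\mu + \nu$ runs over all of $\Pkdf{k}{d-l}{f}$ and $\star$ maps this bijectively onto $\Pkdf{k}{l}{f}$, the polynomial duality estimate applied to $\eta = \Pec{k}{l,f}\underline{\omega}_f$ gives \eqref{eq:bound.P}, closing the induction.

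The argument is a careful combination of Hölder's inequality with the defining relations, so the main work is bookkeeping: tracking the powers of $h_f$ through the recursion and the volume-to-boundary passages, and checking that the constants in the polynomial duality and trace inequalities can be taken independent of $s$ so that the final $\lesssim$ carries no hidden $s$-dependence. The only genuinely structural inputs are the polynomial Koszul decomposition $\Pkdf{k}{d-l}{f} = \ed\Koly{k+1}{d-l-1}(f) \oplus \Koly{k}{d-l}(f)$ and the injectivity of $\ed$ on $\Koly{k+1}{d-l-1}(f)$, both established in \cite{Bonaldi.Di-Pietro.ea:23} and already implicit in the well-posedness of \eqref{eq:Pec}.
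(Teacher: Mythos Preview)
Your proposal is correct and follows essentially the same route as the paper's proof: induction on the cell dimension $d$, Hölder's inequality applied to the defining relations \eqref{eq:ded} and \eqref{eq:Pec}, the inverse and trace inequalities for polynomials, and the Poincaré-type bound $\norm{\Lpdf{s'}{d-l-1}{f}}{\mu} \lesssim h_f\norm{\Lpdf{s'}{d-l}{f}}{\ed\mu}$ on the Koszul complement together with the boundedness of the projectors associated with $\Pkdf{k}{d-l}{f} = \ed\Koly{k+1}{d-l-1}(f) \oplus \Koly{k}{d-l}(f)$. The only cosmetic difference is that the paper obtains these last two ingredients by passing through $L^2$ via the discrete Lebesgue embeddings of \cite[Lemma 1.25]{Di-Pietro.Droniou:20} and the $L^2$ bounds of \cite[Lemma 9 and Eq.~(2.19)]{Di-Pietro.Droniou:23}, whereas you invoke them directly as scaling/finite-dimensional facts; both justifications are equivalent.
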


\begin{proof}
  Since the definitions of $\ded{k}{l,f}$ and $\Pec{k}{l,f}$ are connected and recursive, we prove both bounds at the same time and by induction on $d$.

  If $d=l$, then by definition $\Pec{k}{l,f}\underline{\omega}_f=\star^{-1}\omega_f$, and the inequality \eqref{eq:bound.P} follows. \eqref{eq:bound.d} is not defined in this case.

  If $d\ge l+1$, we first establish \eqref{eq:bound.d} assuming that \eqref{eq:bound.P} holds on the boundary of $f$ (i.e. in the case $d-1$), then we prove \eqref{eq:bound.P} for $d$ using both \eqref{eq:bound.d} and \eqref{eq:bound.P} for $d-1$, which implies the general bounds via induction.

  Let $\mu\in\Pkdf{k}{d-l-1}{f}$.
  Applying H\"older inequalities to the definition \eqref{eq:ded} of $\ded{k}{l,f}$ gives
  \begin{align*}
    \int_f \ded{k}{l,f}\underline{\omega}_f\wedge\mu
    &\leq
    \norm{\Lpdf{s}{l}{f}}{\star^{-1}\omega_f}\norm{\Lpdf{s'}{d-l}{f}}{\ed\mu}+\norm{\Lpdf{s}{l}{\partial f}}{\Pec{k}{l,\partial f}\underline{\omega}_{\partial f}}\norm{\Lpdf{s'}{d-l-1}{\partial f}}{\tr_{\partial f}\mu}
    \\
    &\lesssim\norm{\Lpdf{s}{l}{f}}{\star^{-1}\omega_f}h_f^{-1}\norm{\Lpdf{s'}{d-l-1}{f}}{\mu}+\tnorm{s,l,\partial f}{\underline{\omega}_{\partial f}}h_f^{-\frac{1}{s'}}\norm{\Lpdf{s'}{d-l-1}{f}}{\mu}
  \end{align*}
  where, to pass to the second line, we have used the discrete inverse inequality \cite[Lemma 1.28]{Di-Pietro.Droniou:20} in $f$ for the first term and the boundedness \eqref{eq:bound.P} of local potentials together with the discrete trace inequality \cite[Lemma 1.32]{Di-Pietro.Droniou:20} for the second term.
  Taking the supremum over $\mu$ with $\norm{\Lpdf{s'}{d-r-1}{f}}{\mu}\leq 1$ and using $h_f \simeq h_{f'}$ (by mesh regularity) for all $f' \in \Delta_{d-1}(f)$ and the fact that $\frac{1}{s'}=1-\frac{1}{s}$ by \eqref{eq:conjugate index}, this gives
  \[
  \norm{\Lpdf{s}{l+1}{f}}{\ded{k}{r,f}\underline{\omega}_f}\lesssim h^{-1}_f\left(
  \norm{\Lpdf{s}{l}{f}}{\star^{-1}\omega_f}
  + h^{\frac{1}{s}}_{\partial f}\tnorm{s,l,\partial f}{\underline{\omega}_{\partial f}}
  \right)\lesssim h^{-1}_f\tnorm{s,l,f}{\underline{\omega}_{f}}.
  \]

  Now we show \eqref{eq:bound.P} for $l<d$. Let $(\mu,\nu)\in\Koly{k+1}{d-l-1}(f)\times\Koly{k}{d-l}(f)$. Using the discrete Lebesgue estimates of \cite[Lemma 1.25]{Di-Pietro.Droniou:20} together with the $L^2$-bounds in \cite[Lemma 9]{Di-Pietro.Droniou:23}, we have the following Sobolev--Poincar\'e inequality for $\mu$:
  \begin{equation}\label{eq:local.sobolev.poincare}
    \norm{\Lpdf{s'}{d-l-1}{f}}{\mu}\lesssim |f|^{\frac{1}{s'}-\frac{1}{2}}\norm{\Lpdf{2}{d-l-1}{f}}{\mu}
    \lesssim |f|^{\frac{1}{s'}-\frac{1}{2}}h_f\norm{\Lpdf{2}{d-l}{f}}{\ed\mu}\lesssim h_f\norm{\Lpdf{s'}{d-l}{f}}{\ed\mu}.
  \end{equation}
  Starting from the definition of $\Pec{k}{l,f}$, using the H\"older inequality to bound the integrals, then \eqref{eq:bound.d} in the first term, \eqref{eq:bound.P} and \cite[Lemma 1.32]{Di-Pietro.Droniou:20} for the boundary terms, and \eqref{eq:local.sobolev.poincare} on the norms of $\mu$ that appear in the resulting inequality, we get
  \begin{equation}\label{eq:pec.P.1}
    \begin{aligned}
      &(-1)^{l+1}\int_f \Pec{k}{l,f}\underline{\omega}_f\wedge(\ed\mu+\nu)
      \\
      &\quad\le
      \norm{\Lpdf{s}{l+1}{f}}{\ded{k}{l,f}\underline{\omega}_f}\norm{\Lpdf{s'}{d-l-1}{f}}{\mu}
      + \norm{\Lpdf{s}{l}{\partial f}}{\Pec{k}{l,\partial f}\underline{\omega}_{\partial f}}\norm{\Lpdf{s'}{d-l-1}{\partial f}}{\tr_{\partial f}\mu}
      \\
      &\qquad
      + \norm{\Lpdf{s}{l}{f}}{\star^{-1}\omega_f}\norm{\Lpdf{s'}{d-l}{f}}{\nu}
      \\
      &\quad\lesssim
      \tnorm{s,l,f}{\underline{\omega}_f}\norm{\Lpdf{s'}{d-l}{f}}{\ed\mu}+\tnorm{s,l,f}{\underline{\omega}_{\partial f}}h_f^{\frac{1}{s}}\norm{\Lpdf{s'}{d-l}{f}}{\ed\mu}\\
      &\qquad
      + \norm{\Lpdf{s}{l}{f}}{\star^{-1}\omega_f}\norm{\Lpdf{s'}{d-l}{f}}{\nu}.
    \end{aligned}
  \end{equation}
  We notice that
  \begin{align*}
    \norm{\Lpdf{s'}{d-l}{f}}{\ed\mu}+\norm{\Lpdf{s'}{d-l}{f}}{\nu}\lesssim{}& |f|^{\frac{1}{s'}-\frac12}\left(\norm{\Lpdf{2}{d-l}{f}}{\ed\mu}+\norm{\Lpdf{2}{d-l}{f}}{\nu}\right)\\
    \lesssim{}& |f|^{\frac{1}{s'}-\frac12}\norm{\Lpdf{2}{d-l}{f}}{\ed\mu+\nu}\\
    \lesssim{}&
    \norm{\Lpdf{s'}{d-l}{f}}{\ed\mu+\nu},
  \end{align*}
  where the first bound comes from the discrete Lebesgue inequality \cite[Lemma 1.25]{Di-Pietro.Droniou:20}, while the second is obtained translating \cite[Eq. (2.19)]{Di-Pietro.Droniou:23} (written in vector proxy) in the context of differential forms, and the conclusion follows applying again
  the discrete Lebesgue inequality. Plugging this estimate into \eqref{eq:pec.P.1} and taking the supremum over $(\mu,\nu)$ such that $\norm{\Lpdf{s'}{d-l}{f}}{\ed\mu+\nu}\leq 1$ yields
  \[
  \norm{\Lpdf{s}{l}{f}}{\Pec{k}{l,f}\underline{\omega}_f}
  \lesssim
  \tnorm{s,l,f}{\underline{\omega}_f}
  + h_f^{\frac{1}{s}}\tnorm{s,l,f}{\underline{\omega}_{\partial f}}
  + \norm{\Lpdf{s}{l}{f}}{\star^{-1}\omega_f}
  \lesssim\tnorm{s,l,f}{\underline{\omega}_f},
  \]
  which concludes the proof.
\end{proof}

\begin{lemma}[Discrete Lebesgue embedding]
  For all $f\in\Delta_d(\Mh)$ and all $s,t\in[1,\infty]$, we have
    \begin{equation}\label{eq:discrete.lebesgue}
      \tnorm{s,l,f}{\underline{\omega}_f}
      \simeq h_f^{d\big(\frac{1}{s}-\frac{1}{t}\big)}\tnorm{t,l,f}{\underline{\omega}_f}
      \quad\forall\underline{\omega}_f\in \Xsp{k}{l,f}.
    \end{equation}
\end{lemma}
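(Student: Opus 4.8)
The plan is to exploit the fact that, by \eqref{eq:tnorm}, $\tnorm{s,l,f}{\cdot}$ is a \emph{finite} sum over the subcells $f'\in\Delta_{d'}(f)$ with $d'\in[l,d]$ of quantities of the form $h_{f'}^{\frac{d-d'}{s}}\norm{\Lpdf{s}{l}{f'}}{\star^{-1}\omega_{f'}}$, and to prove \eqref{eq:discrete.lebesgue} term by term. Since the number of subcells of $f$ is bounded in terms of the mesh regularity parameter, summing the term-wise equivalences will immediately give the claimed equivalence with hidden constants independent of $h$ and $f$.

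First I would fix a subcell $f'\in\Delta_{d'}(f)$ and observe that $\star^{-1}\omega_{f'}$ is a polynomial $l$-form of degree at most $k$ on the $d'$-dimensional cell $f'$. Expressing it in a fixed orthonormal coordinate frame of the affine hull of $f'$ reduces, uniformly in $h$, its $L^s(f')$ and $L^t(f')$ norms to the corresponding norms of its scalar components, which are polynomials of degree at most $k$. The discrete Lebesgue inequality \cite[Lemma 1.25]{Di-Pietro.Droniou:20} applied to these components on $f'$, combined with $|f'|\simeq h_{f'}^{d'}$ from mesh regularity, then yields
\[
\norm{\Lpdf{s}{l}{f'}}{\star^{-1}\omega_{f'}}\simeq h_{f'}^{d'\left(\frac{1}{s}-\frac{1}{t}\right)}\norm{\Lpdf{t}{l}{f'}}{\star^{-1}\omega_{f'}}.
\]

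Next I would carry out the power counting. Multiplying the last equivalence by $h_{f'}^{\frac{d-d'}{s}}$ produces the exponent $\frac{d-d'}{s}+d'\left(\frac{1}{s}-\frac{1}{t}\right)=\frac{d}{s}-\frac{d'}{t}$ on $h_{f'}$. Since $f'$ is a subcell of $f$, applying along a chain of subcells the relation that a cell and its immediate subcells have comparable diameters (mesh regularity, already used in the proof of Lemma~\ref{lem:dP.bound}) gives $h_{f'}\simeq h_f$; hence $h_{f'}^{\frac{d}{s}-\frac{d'}{t}}\simeq h_f^{d\left(\frac{1}{s}-\frac{1}{t}\right)}h_{f'}^{\frac{d-d'}{t}}$, so that
\[
h_{f'}^{\frac{d-d'}{s}}\norm{\Lpdf{s}{l}{f'}}{\star^{-1}\omega_{f'}}\simeq h_f^{d\left(\frac{1}{s}-\frac{1}{t}\right)}\,h_{f'}^{\frac{d-d'}{t}}\norm{\Lpdf{t}{l}{f'}}{\star^{-1}\omega_{f'}}.
\]
Summing over all subcells $f'\in\Delta_{d'}(f)$, $d'\in[l,d]$, gives \eqref{eq:discrete.lebesgue}.

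There is no substantial obstacle here: the statement is essentially a scaling identity for polynomials. The only step requiring some care is the bookkeeping of the powers of $h_{f'}$ versus $h_f$ when subcells of several dimensions are present, which becomes transparent once one notes that mesh regularity makes $h_{f'}$ and $h_f$ comparable; the extreme cases $s$ or $t\in\{1,\infty\}$ are covered by the convention on $h^{d(\frac{1}{s}-\frac{1}{t})}$ and by the corresponding form of \cite[Lemma 1.25]{Di-Pietro.Droniou:20}.
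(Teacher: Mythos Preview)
Your proof is correct and follows essentially the same approach as the paper: both rely on the discrete Lebesgue inequality \cite[Lemma~1.25]{Di-Pietro.Droniou:20} applied componentwise, the scaling $|f'|\simeq h_{f'}^{d'}$, and the mesh regularity relation $h_{f'}\simeq h_f$. The only cosmetic difference is that the paper proves one inequality and obtains the other by swapping $s$ and $t$, whereas you establish the two-sided equivalence term by term directly; the power-counting you carry out is exactly equivalent to the paper's computation.
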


\begin{proof}
  We only need to prove \eqref{eq:discrete.lebesgue} with $\lesssim$ instead of $\simeq$, since the converse inequality follows by reversing the roles of $s$ and $t$.
  Let $\underline{\omega}_f\in\Xsp{k}{l,f}$. For all $f'\in\Delta_{d'}(f)$ with $d'\in [l,d]$, 
  the Lebesgue embedding \cite[Lemma 1.25]{Di-Pietro.Droniou:20} together with the mesh regularity property yields
  \[
    \norm{L^s\Lambda^l(f')}{\star^{-1}\omega_{f'}}
  \lesssim h_{f'}^{d'(\frac1s-\frac1t)}
    \norm{L^t\Lambda^l(f')}{\star^{-1}\omega_{f'}}
  \le h_{f'}^{d'(\frac1s-\frac1t)}h_{f'}^{\frac{d'-d}{t}}\tnorm{t,l,f}{\underline{\omega}_f},
  \]
  where the second inequality follows from the definition \eqref{eq:tnorm} of $\tnorm{t,l,f}{\underline{\omega}_f}$.
  Write $d'{\big(\frac1s-\frac1t\big)}+\frac{d'-d}{t} = \frac{d'}{s}-\frac{d}{t}$, multiply by $h_{f'}^{\frac{d-d'}{s}}$, and sum over $f'\in\bigcup_{d'\in[l,d]}\Delta_{d'}(f)$ to obtain  
  \[
  \tnorm{s,l,f}{\underline{\omega}_f}\lesssim \left(\sum_{f'\in\Delta_{d'}(f),\,d'\in[l,d]}h_{f'}^{\frac{d}{s}-\frac{d}{t}}\right)
  \tnorm{t,l,f}{\underline{\omega}_f}.
  \]
  This concludes the proof since, by mesh regularity, $h_{f'}\simeq h_f$ and the cardinality of $\bigcup_{d'\in[l,d]}\Delta_{d'}(f)$ is bounded above uniformly in $h$ for all $f \in \Delta_d(\Mh)$.
\end{proof}

Let us define the potential-based $L^s$-norm by: for $f\in\Delta_d(\Mh)$ and $\underline{\omega}_f\in\Xsp{k}{l,f}$,
\begin{equation}\label{eq:def.pot.norm}
  \norm{s,l,f}{\underline{\omega}_f}\coloneq
  \norm{\Lpdf{s}{l}{f}}{\Pec{k}{l,f}\underline{\omega}_f}
  + \sum_{f'\in\Delta_{d'}(f),d'\in[l,d)}h_{f'}^{\frac{d-d'}{s}}\norm{\Lpdf{s}{l}{f'}}{\tr_{f'}\Pec{k}{l,f}\underline{\omega}_f - \Pec{k}{l,f'}\underline{\omega}_{f'}}.
\end{equation}
In the case $l=1$ with with vector proxies for differential forms, this norm is equivalent uniformly in $h$ to \eqref{eq:def.Ls.Xcurl}.
Additionally, for $s = 2$, the above norm is equivalent uniformly in $h$ to the one induced by the discrete $L^2$-product
\begin{equation}\label{eq:l2.prod}
  \begin{aligned}
    (\underline{\omega}_f,\underline{\mu}_f)_{l,f}
    &\coloneq
    \int_f \Pec{k}{l,f}\underline{\omega}_f \wedge \star \Pec{k}{l,f}\underline{\mu}_f
    \\
    &\quad
    + \sum_{f'\in\Delta_{d'}(f),d'\in[l,d)} h_{f'}^{d-d'} \int_{f'} \left(
      \tr_{f'}\Pec{k}{l,f}\underline{\omega}_f - \Pec{k}{l,f'}\underline{\omega}_{f'}
      \right) \wedge \star \left(
      \tr_{f'}\Pec{k}{l,f}\underline{\mu}_f - \Pec{k}{l,f'}\underline{\mu}_{f'}
      \right).
  \end{aligned}
\end{equation}

\begin{lemma}[Equivalence of potential-based and component norms]\label{lem:equiv.norms}
  For all $f \in \Delta_d(\Mh)$, the following norm equivalence holds uniformly in $h$:
  \begin{equation}\label{eq:equiv.norms}
    \norm{s,l,f}{\underline{\omega}_f}\simeq \tnorm{s,l,f}{\underline{\omega}_f}\qquad\forall \underline{\omega}_f\in\Xsp{k}{l,f}.
  \end{equation}
\end{lemma}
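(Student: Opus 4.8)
The statement is a norm equivalence $\norm{s,l,f}{\underline{\omega}_f}\simeq \tnorm{s,l,f}{\underline{\omega}_f}$ on the local space $\Xsp{k}{l,f}$, where the potential-based norm $\norm{s,l,f}{\cdot}$ is built from $\Pec{k}{l,f}$ and its tangential traces on subcells, while the component norm $\tnorm{s,l,f}{\cdot}$ is built directly from the polynomial components $\star^{-1}\omega_{f'}$. I would prove this by induction on the dimension $d$ of $f$, exactly mirroring the recursive structure of the definitions of $\Pec{k}{l,f}$ and $\ded{k}{l,f}$. The base case $d=l$ is immediate: there $\Pec{k}{l,f}\underline{\omega}_f=\star^{-1}\omega_f$ and there are no lower-dimensional trace terms, so both norms coincide. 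For the inductive step I would treat the two directions ($\lesssim$ and $\gtrsim$) separately.

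\emph{Direction $\norm{s,l,f}{\underline{\omega}_f}\lesssim\tnorm{s,l,f}{\underline{\omega}_f}$.} This is the easy direction and follows directly from Lemma~\ref{lem:dP.bound}. The first term of $\norm{s,l,f}{\cdot}$ is $\norm{\Lpdf{s}{l}{f}}{\Pec{k}{l,f}\underline{\omega}_f}\lesssim\tnorm{s,l,f}{\underline{\omega}_f}$ by \eqref{eq:bound.P}. For each boundary term on $f'\in\Delta_{d'}(f)$ with $d'<d$, I would use a discrete trace inequality (\cite[Lemma 1.32]{Di-Pietro.Droniou:20}) to control $\norm{\Lpdf{s}{l}{f'}}{\tr_{f'}\Pec{k}{l,f}\underline{\omega}_f}$ by $h_f^{-1/s}\norm{\Lpdf{s}{l}{f}}{\Pec{k}{l,f}\underline{\omega}_f}$, then invoke \eqref{eq:bound.P} again, and control $\norm{\Lpdf{s}{l}{f'}}{\Pec{k}{l,f'}\underline{\omega}_{f'}}$ by $\tnorm{s,l,f'}{\underline{\omega}_{f'}}$ using \eqref{eq:bound.P} on the subcell. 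Collecting the powers of $h_{f'}\simeq h_f$ and noting $\tnorm{s,l,f'}{\underline{\omega}_{f'}}\le h_{f'}^{(d'-d)/s}\tnorm{s,l,f}{\underline{\omega}_f}$ from the definition \eqref{eq:tnorm} gives the bound.

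\emph{Direction $\tnorm{s,l,f}{\underline{\omega}_f}\lesssim\norm{s,l,f}{\underline{\omega}_f}$.} This is the substantive direction. The component attached to $f$ itself, $h_f^{0}\norm{\Lpdf{s}{l}{f}}{\star^{-1}\omega_f}$, must be recovered from $\Pec{k}{l,f}\underline{\omega}_f$: I would extract it by testing the defining relation \eqref{eq:Pec} against $\nu\in\Koly{k}{d-l}(f)$ (taking $\mu=0$), which yields $(-1)^{l+1}\int_f\Pec{k}{l,f}\underline{\omega}_f\wedge\nu=(-1)^{l+1}\int_f\star^{-1}\omega_f\wedge\nu$, i.e.\ the $\Koly{k}{d-l}$-component of $\star^{-1}\omega_f$ equals that of $\Pec{k}{l,f}\underline{\omega}_f$; the complementary $\ed\Poly{k+1}{}\Lambda^{d-l-1}(f)$-part of $\star^{-1}\omega_f$ can be related to $\ded{k}{l,f}\underline{\omega}_f$ and the boundary traces through \eqref{eq:ded}, and $\ded{k}{l,f}\underline{\omega}_f$ is in turn controlled using \eqref{eq:Pec} and the inductive control of the boundary. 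The cleanest route is probably: (i) bound $\norm{\Lpdf{s}{l}{f}}{\star^{-1}\omega_f}$ by $\norm{\Lpdf{s}{l}{f}}{\Pec{k}{l,f}\underline{\omega}_f}$ plus boundary-trace contributions, using the Hodge-type decomposition of $\Poly{k}{}\Lambda^{d-l}(f)$ into $\ed\Poly{k+1}{}\Lambda^{d-l-1}$ and $\Koly{k}{d-l}$ together with \eqref{eq:ded}–\eqref{eq:Pec} and an inverse inequality (\cite[Lemma 1.28]{Di-Pietro.Droniou:20}) to pass between degrees; (ii) for each $f'\in\Delta_{d'}(f)$ with $d'<d$, invoke the induction hypothesis $\tnorm{s,l,f'}{\underline{\omega}_{f'}}\lesssim\norm{s,l,f'}{\underline{\omega}_{f'}}$ and then, by a triangle inequality, bound $\norm{\Lpdf{s}{l}{f'}}{\Pec{k}{l,f'}\underline{\omega}_{f'}}$ by $\norm{\Lpdf{s}{l}{f'}}{\tr_{f'}\Pec{k}{l,f}\underline{\omega}_f-\Pec{k}{l,f'}\underline{\omega}_{f'}}$ plus $\norm{\Lpdf{s}{l}{f'}}{\tr_{f'}\Pec{k}{l,f}\underline{\omega}_f}$, the latter being controlled by $\norm{\Lpdf{s}{l}{f}}{\Pec{k}{l,f}\underline{\omega}_f}$ via a discrete trace inequality. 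Combining (i) and (ii) with the scaling bookkeeping of the $h_{f'}$ weights (using mesh regularity $h_{f'}\simeq h_f$) closes the induction.

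\emph{Main obstacle.} The delicate point is step (i) of the hard direction: recovering the full $L^s$-norm of the component polynomial $\star^{-1}\omega_f$ from $\Pec{k}{l,f}\underline{\omega}_f$ and the boundary data. One must decompose $\star^{-1}\omega_f$ (or rather bound its two "spectral" pieces with respect to the $\ed$-$\kappa$ splitting of the trimmed polynomial space), use that the $\Koly{k}{d-l}$-part is directly read off from \eqref{eq:Pec}, and that the exact part is controlled through $\ded{k}{l,f}\underline{\omega}_f$ via \eqref{eq:ded}; the Sobolev--Poincaré-type inequality \eqref{eq:local.sobolev.poincare} and the $L^2$-norm-equivalence \cite[Eq.~(2.19)]{Di-Pietro.Droniou:23} (translated to forms) are the tools that make the $L^s$ bookkeeping work uniformly in $h$. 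One should also double-check that the "missing" top-cell term $h_f^{0}\norm{\Lpdf{s}{l}{f}}{\star^{-1}\omega_f}$ in $\tnorm{s,l,f}{\cdot}$ — which has no analogue written explicitly in $\norm{s,l,f}{\cdot}$ beyond $\norm{\Lpdf{s}{l}{f}}{\Pec{k}{l,f}\underline{\omega}_f}$ — is genuinely controlled, which is exactly what the relation $\ded{k}{l,f}\underline{\omega}_f$ / \eqref{eq:Pec} provides after the decomposition. The rest is routine Hölder/inverse/trace estimates and scaling, analogous to the $s=2$ vector-proxy arguments of \cite[Proposition~6 and Lemma~6]{Di-Pietro.Droniou:23}.
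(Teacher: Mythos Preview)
Your easy direction ($\norm{s,l,f}{\cdot}\lesssim\tnorm{s,l,f}{\cdot}$) is exactly the paper's argument: bound each potential by \eqref{eq:bound.P}, handle the traces by the discrete trace inequality, and collect the $h$-weights.

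For the hard direction, your plan is workable in outline but takes a substantially longer route than the paper. You set up an induction on $d$ and propose to split $\star^{-1}\omega_f$ into a Koszul part (read off from \eqref{eq:Pec} with $\mu=0$) and an exact part that you control by chaining \eqref{eq:ded} and \eqref{eq:Pec}, carrying the boundary contributions as extra error terms and then absorbing them via the induction hypothesis on subcells. The paper instead observes a single algebraic identity that removes the need for any of this: restricting the test pair in \eqref{eq:Pec} to $(\mu,\nu)\in\Koly{k}{d-l-1}(f)\times\Koly{k}{d-l}(f)$ (so that $\mu\in\Poly{k}\Lambda^{d-l-1}(f)$ and \eqref{eq:ded} can be substituted), the boundary integrals in \eqref{eq:ded} and \eqref{eq:Pec} cancel \emph{exactly}, yielding $\proj{k,-}{d-l,f}(\star\Pec{k}{l,f}\underline{\omega}_f)=\omega_f$. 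Applied on every subcell $f'$, this gives $\norm{\Lpdf{s}{l}{f'}}{\star^{-1}\omega_{f'}}\lesssim\norm{\Lpdf{s}{l}{f'}}{\Pec{k}{l,f'}\underline{\omega}_{f'}}$ directly (via $L^s$-boundedness of the projector), and the bound $\tnorm{s,l,f}{\cdot}\lesssim\norm{s,l,f}{\cdot}$ then follows from a single triangle inequality inserting $\tr_{f'}\Pec{k}{l,f}\underline{\omega}_f$ and one trace estimate. No induction, no inverse or Sobolev--Poincar\'e inequalities, and no tracking of boundary error terms.

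Two small points of caution on your version. First, you write the decomposition of $\Poly{k}{}\Lambda^{d-l}(f)$, but $\omega_f$ lives only in the trimmed space $\Poly{k,-}\Lambda^{d-l}(f)\cong\ed\Koly{k}{d-l-1}(f)\oplus\Koly{k}{d-l}(f)$; working in the correct (smaller) space is precisely what keeps $\mu$ in the domain of \eqref{eq:ded} and makes the cancellation available. Second, your step~(ii) would have to reconcile the subcell terms of $\norm{s,l,f'}{\cdot}$ (which involve $\tr_{f''}\Pec{k}{l,f'}$) with those of $\norm{s,l,f}{\cdot}$ (which involve $\tr_{f''}\Pec{k}{l,f}$); this can be done by further triangle inequalities and trace bounds, but it is bookkeeping that the projection identity bypasses entirely.
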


\begin{proof}
  To prove the first inequality $\norm{s,l,f}{\underline{\omega}_f}\lesssim \tnorm{s,l,f}{\underline{\omega}_f}$, we expand the definition of the potential-based norm using triangle inequalities, then use the discrete trace inequalities \cite[Lemma 1.32]{Di-Pietro.Droniou:20}, $h_{f'}\lesssim h_f$, and that $\tr_{f'}=\tr_{f'}\circ\cdots\circ\tr_{f}$ to write 
  \begin{equation}\label{eq:bound.trace}
    \norm{\Lpdf{s}{l}{f'}}{\tr_{f'}\Pec{k}{l,f}\underline{\omega}_f}
    \lesssim h_f^{-\frac{d-d'}{s}}\norm{\Lpdf{s}{l}{f}}{\Pec{k}{l,f}\underline{\omega}_f}.
  \end{equation}
  and finish by applying \eqref{eq:bound.P} to get
  \begin{align*}
    \norm{s,l,f}{\underline{\omega}_f}
    &\lesssim\norm{\Lpdf{s}{l}{f}}{\Pec{k}{l,f}\underline{\omega}_f}
    + \sum_{f'\in\Delta_{d'}(f),d'\in[l,d)}h_{f'}^{\frac{d-d'}{s}}\left(
      \norm{\Lpdf{s}{l}{f'}}{\tr_{f'}\Pec{k}{l,f}\underline{\omega}_f}
      + \norm{\Lpdf{s}{l}{f'}}{\Pec{k}{l,f'}\underline{\omega}_{f'}}
      \right)
      \\
      &\lesssim \sum_{f'\in\Delta_{d'}(f),d'\in[l,d]}h_{f'}^{\frac{d-d'}{s}}\tnorm{s,l,f'}{\underline{\omega}_{f'}}
      \lesssim \tnorm{s,l,f}{\underline{\omega}_f}.
  \end{align*}
where the last inequality comes from the uniform upper bound on the number of subcells of any mesh entity $f$, allowing us to absorb the outer sum into the hidden constant.

  For the reverse inequality, we first show that $\proj{k,-}{d-l,f}(\star\Pec{k}{l,f}\underline{\omega}_f)=\omega_f$. If $d=l$, the definition \eqref{eq:pec.base} of $\Pec{k}{l,f}$ yields $\proj{k,-}{0,f}(\star\Pec{k}{l,f}\underline{\omega}_f)=\omega_f$ since the trimmed space of $0$-forms is in fact the full polynomial space. Otherwise, restricting the test functions in the definition \eqref{eq:Pec} of $\Pec{k}{l,f}$ to $(\mu,\nu)\in\Koly{k}{d-l-1}(f)\times\Koly{k}{d-l}(f)\cong\Pkdf{k,-}{d-l}{f}$, we can plug in the definition \eqref{eq:ded} of $\ded{k}{l,f}$, since $\Koly{k}{d-l-1}(f)\subset\Pkdf{k,-}{d-l-1}{f}$, to obtain the desired result.

  Then re-expressing the component norm, using the isometry property of $\star$ together with the $L^s$-boundedness of the $L^2$ projections on local polynomial spaces \cite[Lemma 1.44]{Di-Pietro.Droniou:20}, taking the case $d'=d$ out of the sum, and then introducing the traces $\tr_{f'}\Pec{k}{l,f}\underline{\omega}_f$ in the remaining sum, we get
  \begin{align*}
    \tnorm{s,l,f}{\underline{\omega}_f}
    &= \sum_{f'\in\Delta_{d'}(f),d'\in[l,d]} h_{f'}^{\frac{d-d'}{s}}\norm{\Lpdf{s}{l}{f'}}{\star^{-1}\proj{k,-}{d'-l,f'}\star\Pec{k}{l,f'}\underline{\omega}_{f'}}
    \\
    &\lesssim
    \norm{\Lpdf{s}{l}{f}}{\Pec{k}{l,f}\underline{\omega}_f}
    \\
    &\quad
    + \sum_{f'\in\Delta_{d'}(f),d'\in[l,d)}h_{f'}^{\frac{d-d'}{s}}\left(
      \norm{\Lpdf{s}{l}{f'}}{\Pec{k}{l,f'}\underline{\omega}_{f'}-\tr_{f'}\Pec{k}{l,f}\underline{\omega}_f}
      + \norm{\Lpdf{s}{l}{f'}}{\tr_{f'}\Pec{k}{l,f}\underline{\omega}_f}
      \right)
      \\
      &\lesssim \norm{s,l,f}{\underline{\omega}_f},
  \end{align*}
  where again we have bounded the extra trace terms in the second line with \eqref{eq:bound.trace}.
\end{proof}

\begin{lemma}[Boundedness of the interpolator]\label{lem:I.bound} 
Let $f\in\Delta_d(\Mh)$, and $r\in\Natural$, $s\in [1,\infty]$ be such that $rs>d$. Recalling that $\Iec{k}{l,f}:C^0\Lambda^l(\overline{f})\to\Xsp{k}{l,f}$ is the interpolator on $\Xsp{k}{l,f}$, it holds
\begin{align}\label{eq:bound.I}
\tnorm{s,l,f}{\Iec{k}{l,f}\omega}\lesssim \sum_{t=0}^r h_f^t\seminorm{\Wpdf{t,s}{l}{f}}{\omega}\qquad\forall\omega\in\Wpdf{r,s}{l}{f},
\end{align}
where 
\begin{equation}\label{eq:def.seminorm}
  |\omega|_{\Wpdf{t,s}{l}{f}}\coloneq\sum_{\alpha\in\Natural^d,\,|\alpha|=t} \norm{L^s\Lambda^{l}(f)}{\partial^\alpha \omega}.
\end{equation}
\end{lemma}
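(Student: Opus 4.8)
The plan is to estimate each component of $\Iec{k}{l,f}\omega$ separately and then sum over the uniformly bounded number of subcells of $f$. By the definitions \eqref{eq:Iec} and \eqref{eq:tnorm},
\[
\tnorm{s,l,f}{\Iec{k}{l,f}\omega}
= \sum_{f'\in\Delta_{d'}(f),\,d'\in[l,d]} h_{f'}^{\frac{d-d'}{s}}\,\norm{\Lpdf{s}{l}{f'}}{\star^{-1}\proj{k,-}{d'-l,f'}\big(\star\tr_{f'}\omega\big)},
\]
so the first step is to remove the projection. Since the Hodge stars $\star$ and $\star^{-1}$ are pointwise isometries of the exterior algebra, they preserve $L^s$-norms; combined with the uniform $L^s$-stability of the $L^2$-orthogonal projection onto the trimmed polynomial space (as in \cite[Lemma 1.44]{Di-Pietro.Droniou:20}, used exactly as in the proof of Lemma~\ref{lem:equiv.norms}), this gives
\[
\norm{\Lpdf{s}{l}{f'}}{\star^{-1}\proj{k,-}{d'-l,f'}\big(\star\tr_{f'}\omega\big)}
= \norm{\Lpdf{s}{d'-l}{f'}}{\proj{k,-}{d'-l,f'}\big(\star\tr_{f'}\omega\big)}
\lesssim \norm{\Lpdf{s}{d'-l}{f'}}{\star\tr_{f'}\omega}
= \norm{\Lpdf{s}{l}{f'}}{\tr_{f'}\omega}.
\]
It thus remains to bound $h_{f'}^{\frac{d-d'}{s}}\norm{\Lpdf{s}{l}{f'}}{\tr_{f'}\omega}$ for each subcell.

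The key ingredient is a scaled Sobolev embedding on $f$. Since $rs>d=\dim f$, Morrey's inequality yields $\Wpdf{r,s}{l}{f}\hookrightarrow C^0\Lambda^l(\overline f)$ — in particular $\tr_{f'}\omega$ is well defined for every $f'\in\Delta_{d'}(f)$, including a vertex, consistently with the domain of $\Iec{k}{l,f}$. Mapping $f$ affinely onto a reference configuration of unit diameter, which is uniformly non-degenerate by mesh regularity in the sense of \cite[Definition 1.9]{Di-Pietro.Droniou:20}, applying there the embedding $W^{r,s}\hookrightarrow L^\infty$ with a constant depending only on the reference geometry, $r$ and $s$, and scaling back while tracking the standard powers of $h_f$ (each derivative costing $h_f^{-1}$, the ambient $L^s$-norm carrying $h_f^{d/s}$, and the form-degree factor $h_f^{l}$ cancelling on both sides), one obtains
\[
\norm{\Lpdf{\infty}{l}{f}}{\omega}\lesssim \sum_{t=0}^{r} h_f^{\,t-\frac{d}{s}}\,\seminorm{\Wpdf{t,s}{l}{f}}{\omega}.
\]
Since $\norm{\Lpdf{s}{l}{f'}}{\tr_{f'}\omega}\le |f'|^{1/s}\norm{\Lpdf{\infty}{l}{f}}{\omega}$ with $|f'|\simeq h_{f'}^{d'}$ (the case $d'=d$ being trivial, as then $\tr_{f}\omega=\omega$) and $h_{f'}\simeq h_f$ (mesh regularity), this gives, for each subcell,
\[
h_{f'}^{\frac{d-d'}{s}}\norm{\Lpdf{s}{l}{f'}}{\tr_{f'}\omega}
\le h_{f'}^{\frac{d-d'}{s}}|f'|^{\frac1s}\norm{\Lpdf{\infty}{l}{f}}{\omega}
\lesssim h_f^{\frac{d}{s}}\norm{\Lpdf{\infty}{l}{f}}{\omega}
\lesssim \sum_{t=0}^{r} h_f^{\,t}\,\seminorm{\Wpdf{t,s}{l}{f}}{\omega}.
\]
Summing over the finitely many $f'\in\Delta_{d'}(f)$, $d'\in[l,d]$ — their number being bounded uniformly in $h$ by mesh regularity — yields \eqref{eq:bound.I}.

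The step I expect to be the main obstacle is guaranteeing that the constant in the scaled Sobolev embedding $\Wpdf{r,s}{l}{f}\hookrightarrow\Lpdf{\infty}{l}{f}$ is independent of $h$ for a general polytopal cell $f$. This is precisely where mesh regularity is needed: one either invokes that $f$ is star-shaped with respect to a ball of radius comparable to $h_f$ (so that the embedding constant on the rescaled cell is controlled by the star-shapedness ratio alone), or transfers the embedding from a uniformly shape-regular simplicial submesh of $f$ and reassembles the local bounds. Everything else — the $L^s$-stability of the projections, the isometry of $\star$, the elementary scaling bookkeeping, and the bounded cardinality of $\Delta(f)$ — is routine and already used elsewhere in this appendix.
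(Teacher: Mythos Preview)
Your proof is correct and follows essentially the same route as the paper's: expand $\tnorm{s,l,f}{\Iec{k}{l,f}\omega}$ via \eqref{eq:Iec} and \eqref{eq:tnorm}, strip the projection using the isometry of $\star$ and the $L^s$-boundedness of the $L^2$-projector, pass to the $L^\infty$-norm on $f$ via $\norm{\Lpdf{s}{l}{f'}}{\tr_{f'}\omega}\le |f'|^{1/s}\sup_{\overline f}|\omega|$, and invoke the scaled Sobolev embedding $W^{r,s}\hookrightarrow L^\infty$ (valid since $rs>d$) together with $|f'|\simeq h_{f'}^{d'}$, $h_{f'}\simeq h_f$, and the uniform bound on the number of subcells. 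The only difference is presentational: the paper dispatches the embedding step by citing \cite[Eq.~(5.110)]{Di-Pietro.Droniou:20}, whereas you spell out the scaling argument and flag the mesh-regularity dependence of the constant --- but the underlying argument is the same.
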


\begin{remark}[Domain of the interpolator] This one-size-fits-all estimate can be improved for certain form degrees, Sobolev exponents and subcell dimension, which do not require the domain of the interpolator to be made of continuous differential forms; see, e.g., \cite[Eq.~(4.28)]{Di-Pietro.Droniou:23}.
\end{remark}

\begin{proof}
  First note that, since $rs>d$, $\omega\in \Wpdf{r,s}{l}{f}$ belongs to $C^0\Lambda^l(\overline{f})$.
  By the definition \eqref{eq:Iec} of the interpolator and \eqref{eq:tnorm} of the norm $\tnorm{s,l,f}{{\cdot}}$, 
  \begin{align*}
    \tnorm{s,l,f}{\Iec{k}{l,f}\omega}
    &=
    \sum_{f'\in\Delta_{d'}(f),d'\in[l,d]}h_{f'}^{\frac{d-d'}{s}}\norm{\Lpdf{s}{d'-l}{f'}}{\proj{k,-}{d'-l,f'}(\star\tr_{f'}\omega)}
    \\
    &\lesssim
    \sum_{f'\in\Delta_{d'}(f),d'\in[l,d]}h_{f'}^{\frac{d-d'}{s}}\norm{\Lpdf{s}{l}{f'}}{\tr_{f'}\omega}
    \\
   &\lesssim
    \sum_{f'\in\Delta_{d'}(f),d'\in[l,d]}h_{f'}^{\frac{d-d'}{s}}|f'|^{\frac{1}{s}}\sup_{\overline{f}}|\omega|
    \\
    &\lesssim
    \sum_{f'\in\Delta_{d'}(f),d'\in[l,d]}h_{f'}^{\frac{d-d'}{s}}|f'|^{\frac{1}{s}}|f|^{-\frac{1}{s}}\left(\sum_{t=0}^r h_f^t\seminorm{\Wpdf{t,s}{l}{f}}{\omega}\right),
  \end{align*}
  where for the second line we have used the $L^s$-boundedness of $L^2$-projectors on local polynomial subspaces (see \cite[Lemma 1.44]{Di-Pietro.Droniou:20}) and the fact that the Hodge star is an isometry;
  for the third, we have bounded the traces of $\omega$ by the supremum over $\overline{f}$; finally, to bound this supremum in the last line, we have invoked \cite[Eq.~(5.110)]{Di-Pietro.Droniou:20} (valid since $rs>d$). The regularity of the mesh sequence implies an upper bound on the number of subcells of dimension $d'$ of $f$ (i.e. the number of vertices, edges, and faces of any $f\in\Mh$ are bounded). Applying this, as well as the scaling $|f'|\simeq h_{f'}^{d'}$, $|f|\simeq h_{f}^{d}$ and $h_{f'}\lesssim h_{f}$, leads to \eqref{eq:bound.I}.
\end{proof}

\begin{proposition}[Boundedness of the discrete exterior derivative and potential of the interpolate]
  Let $f\in\Delta_d(\Mh)$, and $r\in\Natural$, $s\in[1,\infty]$ be such that $rs>d$. Recalling the notation \eqref{eq:def.seminorm}, it holds, for all integers $0\le m\le \min(r-1,k)$,
    \begin{equation}\label{eq:bound.dI}
      \seminorm{\Wpdf{m,s}{l+1}{f}}{\ded{k}{l,f}\Iec{k}{l,f}\omega}\lesssim \sum_{t=m+1}^rh_f^{t-m-1}\seminorm{\Wpdf{t,s}{l}{f}}{\omega}\quad
      \forall \omega\in W^{r,s}\Lambda^{l}(f)
    \end{equation}
and, for all integers $0\le m\le \min(r,k+1)$,
    \begin{equation}\label{eq:bound.PI}
      \seminorm{\Wpdf{m,s}{l}{f}}{\Pec{k}{l,f}\Iec{k}{l,f}\omega}\lesssim \sum_{t=m}^rh_f^{t-m}\seminorm{\Wpdf{t,s}{l}{f}}{\omega}\quad
      \forall \omega\in W^{r,s}\Lambda^{l}(f).
    \end{equation}
\end{proposition}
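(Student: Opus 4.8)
The plan is to derive both \eqref{eq:bound.PI} and \eqref{eq:bound.dI} from the familiar ``polynomial consistency $+$ boundedness $\Rightarrow$ approximation'' mechanism, here applied to the reconstruction \emph{of the interpolate}. Two facts feed the argument. The first is the polynomial exactness of the reconstructions: $\Pec{k}{l,f}\Iec{k}{l,f}\pi=\pi$ and $\ded{k}{l,f}\Iec{k}{l,f}\pi=\ed\pi$ for every $\pi\in\Pkdf{k}{l}{f}$. The identity for the potential expresses that $\Pec{k}{l,f}\Iec{k}{l,f}$ is a projector onto $\Pkdf{k}{l}{f}$ and is obtained from the definitions \eqref{eq:ded}--\eqref{eq:Pec} just as the identity $\proj{k,-}{d-l,f}(\star\Pec{k}{l,f}\underline{\omega}_f)=\omega_f$ was derived in the proof of Lemma \ref{lem:equiv.norms}; the identity for the discrete derivative is the local commutation of the discrete with the continuous exterior derivative established in \cite{Bonaldi.Di-Pietro.ea:23}. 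The second is a Bramble--Hilbert estimate: for $\omega\in\Wpdf{r,s}{l}{f}$ there is a polynomial $\pi\in\Pkdf{k}{l}{f}$ (an averaged Taylor polynomial, taken componentwise) such that, writing $r^\ast\coloneq\min(r,k+1)$,
\[
\seminorm{\Wpdf{t,s}{l}{f}}{\omega-\pi}\lesssim\sum_{\rho=\max(t,r^\ast)}^{r}h_f^{\rho-t}\seminorm{\Wpdf{\rho,s}{l}{f}}{\omega}\qquad\text{for all }0\le t\le r;
\]
this combines \cite[Lemma~1.43]{Di-Pietro.Droniou:20} with the trivial observation that the $W^{t,s}$-seminorm of a polynomial of degree $\le k$ vanishes whenever $t>k$.

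For \eqref{eq:bound.PI} I would split, using linearity and exactness, $\Pec{k}{l,f}\Iec{k}{l,f}\omega=\pi+\Pec{k}{l,f}\Iec{k}{l,f}(\omega-\pi)$. If $m=k+1$ then the left-hand side of \eqref{eq:bound.PI} vanishes, the reconstruction being a polynomial of degree $\le k$, so it suffices to treat $m\le k$. Taking the $\Wpdf{m,s}{l}{f}$-seminorm, the contribution of $\pi$ is bounded by $\seminorm{\Wpdf{m,s}{l}{f}}{\omega}+\seminorm{\Wpdf{m,s}{l}{f}}{\omega-\pi}$, while, since $\Pec{k}{l,f}\Iec{k}{l,f}(\omega-\pi)\in\Pkdf{k}{l}{f}$, the discrete inverse inequality \cite[Lemma~1.28]{Di-Pietro.Droniou:20}, then \eqref{eq:bound.P}, then \eqref{eq:bound.I} yield
\[
\seminorm{\Wpdf{m,s}{l}{f}}{\Pec{k}{l,f}\Iec{k}{l,f}(\omega-\pi)}
\lesssim h_f^{-m}\norm{\Lpdf{s}{l}{f}}{\Pec{k}{l,f}\Iec{k}{l,f}(\omega-\pi)}
\lesssim h_f^{-m}\tnorm{s,l,f}{\Iec{k}{l,f}(\omega-\pi)}
\lesssim h_f^{-m}\sum_{t=0}^{r}h_f^{t}\seminorm{\Wpdf{t,s}{l}{f}}{\omega-\pi}.
\]
Inserting the Bramble--Hilbert bound and interchanging the two summations, the powers $h_f^{t}$ and $h_f^{-t}$ cancel and, the number of surviving indices being bounded by mesh regularity, one is left with $\sum_{\rho=r^\ast}^{r}h_f^{\rho-m}\seminorm{\Wpdf{\rho,s}{l}{f}}{\omega}$; since $m\le r^\ast$ this range is contained in $[m,r]$, and adding the contribution of $\pi$ gives \eqref{eq:bound.PI}.

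The proof of \eqref{eq:bound.dI} is parallel. Exactness gives $\ded{k}{l,f}\Iec{k}{l,f}\omega=\ed\pi+\ded{k}{l,f}\Iec{k}{l,f}(\omega-\pi)$; since the hypothesis forces $m\le k$, taking the $\Wpdf{m,s}{l+1}{f}$-seminorm one bounds $\seminorm{\Wpdf{m,s}{l+1}{f}}{\ed\pi}\lesssim\seminorm{\Wpdf{m+1,s}{l}{f}}{\pi}\le\seminorm{\Wpdf{m+1,s}{l}{f}}{\omega}+\seminorm{\Wpdf{m+1,s}{l}{f}}{\omega-\pi}$ (the exterior derivative acting on components as a constant-coefficient first-order operator), and controls the remainder, using the inverse inequality, \eqref{eq:bound.d} and \eqref{eq:bound.I}, by $h_f^{-m-1}\sum_{t=0}^{r}h_f^{t}\seminorm{\Wpdf{t,s}{l}{f}}{\omega-\pi}$, which collapses as above to $\sum_{\rho=r^\ast}^{r}h_f^{\rho-m-1}\seminorm{\Wpdf{\rho,s}{l}{f}}{\omega}$. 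Using $m+1\le r^\ast$ one checks that both contributions fit into $\sum_{t=m+1}^{r}h_f^{t-m-1}\seminorm{\Wpdf{t,s}{l}{f}}{\omega}$, which is \eqref{eq:bound.dI}.

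The delicate, and most error-prone, part is the bookkeeping: one must track separately the regime $t\le k$, where Bramble--Hilbert produces a genuine power of $h_f$, from the regime $k<t\le r$, where $\pi$ simply drops out of the seminorm, and then verify that the index constraints $m\le\min(r-1,k)$ and $m\le\min(r,k+1)$ are exactly what makes every term that is produced land in the advertised sum with the correct exponent of $h_f$. No structural ingredient beyond Lemmas \ref{lem:dP.bound}--\ref{lem:I.bound} and the polynomial exactness recalled above is required; the cancellation of the intermediate factors $h_f^{\pm t}$ coming from the inverse inequality and the interpolation bound is what produces the clean exponents $\rho-m$ and $\rho-m-1$ in the final estimates.
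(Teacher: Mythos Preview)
Your proposal is correct and follows essentially the same strategy as the paper's proof: split via the polynomial consistency of $\Pec{k}{l,f}\Iec{k}{l,f}$ and $\ded{k}{l,f}\Iec{k}{l,f}$, control the remainder through the chain ``inverse inequality $\to$ Lemma~\ref{lem:dP.bound} $\to$ Lemma~\ref{lem:I.bound}'', and close with an approximation estimate for the comparison polynomial. The only real difference is the choice of that polynomial: the paper takes $\pi=\proj{k}{l,f}\omega$, the $L^2$-orthogonal projection onto $\Pkdf{k}{l}{f}$, and handles the two regimes $t\le m+1$ and $t\ge m+2$ via, respectively, the approximation properties and the $W^{t,s}$-boundedness of this projector (cf.\ \cite[Lemma~1.45 and Remark~1.47]{Di-Pietro.Droniou:20}); you take instead an averaged Taylor polynomial and encode both regimes in a single Bramble--Hilbert bound. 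Either choice works and the resulting bookkeeping, including the cancellation of $h_f^{\pm t}$ and the role of the constraint $m+1\le r^\ast=\min(r,k+1)$, is identical.
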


\begin{proof}
  Let us consider \eqref{eq:bound.dI}. 
  Applying \eqref{eq:bound.d} to $\Iec{k}{l,f}(\omega-\proj{k}{l,f}\omega)$ and \eqref{eq:bound.I} to $\omega-\proj{k}{l,f}\omega$ gives
  \begin{equation}\label{eq:est.bound.dI.1}
  \norm{\Lpdf{s}{l+1}{f}}{\ded{k}{l,f}\Iec{k}{l,f}(\omega-\proj{k}{l,f}\omega)}
         \lesssim h_f^{-1}\sum_{t=0}^rh_f^t \seminorm{\Wpdf{t,s}{l}{f}}{\omega-\proj{k}{l,f}\omega}.
  \end{equation}
  Since $m+1\le k+1$, the approximation properties of $\proj{k}{l,f}$ (that is, \cite[Lemma 1.45]{Di-Pietro.Droniou:20} applied to each component of the differential forms in a fixed basis) yield
  \begin{equation}\label{eq:est.bound.dI.t<=m}
    \seminorm{\Wpdf{t,s}{l}{f}}{\omega-\proj{k}{l,f}\omega}\lesssim h_f^{m+1-t}\seminorm{\Wpdf{m+1,s}{l}{f}}{\omega}\qquad\forall t\le m+1.
  \end{equation}
  On the other hand, the $W^{t,s}$-boundedness of the $L^2$-orthogonal projectors on polynomial spaces gives $\seminorm{\Wpdf{t,s}{l}{f}}{\proj{k}{l,f}\omega}\lesssim\seminorm{\Wpdf{t,s}{l}{f}}{\omega}$ (see \cite[Remark 1.47]{Di-Pietro.Droniou:20} for $t\le k+1$, the case $t>k+1$ being trivial since the left-hand side vanishes), and thus
  \begin{equation}\label{eq:est.bound.dI.t>m}
    \seminorm{\Wpdf{t,s}{l}{f}}{\omega-\proj{k}{l,f}\omega}\lesssim \seminorm{\Wpdf{t,s}{l}{f}}{\omega}\qquad\forall t\ge m+2.
  \end{equation}
  Plugging \eqref{eq:est.bound.dI.t<=m} and \eqref{eq:est.bound.dI.t>m} into \eqref{eq:est.bound.dI.1} leads to
  \[
  \norm{\Lpdf{s}{l+1}{f}}{\ded{k}{l,f}\Iec{k}{l,f}(\omega-\proj{k}{l,f}\omega)}
  \lesssim h_f^m \seminorm{\Wpdf{m+1,s}{l}{f}}{\omega} + \sum_{t=m+2}^rh_f^{t-1} \seminorm{\Wpdf{t,s}{l}{f}}{\omega}
  = \sum_{t=m+1}^rh_f^{t-1} \seminorm{\Wpdf{t,s}{l}{f}}{\omega}.
  \]
  We then use the discrete inverse inequality \cite[Lemma 1.28]{Di-Pietro.Droniou:20} on each polynomial component of $\ded{k}{l,f}\Iec{k}{l,f}(\omega-\proj{k}{l,f}\omega)$ to infer
  \[
  \seminorm{\Wpdf{m,s}{l+1}{f}}{\ded{k}{l,f}\Iec{k}{l,f}(\omega-\proj{k}{l,f}\omega)}
         \lesssim \sum_{t=m+1}^rh_f^{t-m-1} \seminorm{\Wpdf{t,s}{l}{f}}{\omega}.
  \]
 Using a triangle inequality, we next write
  \begin{align}
  \seminorm{\Wpdf{m,s}{l+1}{f}}{\ded{k}{l,f}\Iec{k}{l,f}\omega}
         \lesssim{}&  \seminorm{\Wpdf{m,s}{l+1}{f}}{\ded{k}{l,f}\Iec{k}{l,f}\proj{k}{l,f}\omega}+ \sum_{t=m+1}^rh_f^{t-m-1} \seminorm{\Wpdf{t,s}{l}{f}}{\omega}\nonumber\\
         ={}&\seminorm{\Wpdf{m,s}{l+1}{f}}{\ed\proj{k}{l,f}\omega}+ \sum_{t=m+1}^rh_f^{t-m-1} \seminorm{\Wpdf{t,s}{l}{f}}{\omega}\nonumber\\
         \lesssim {}&\seminorm{\Wpdf{m+1,s}{l}{f}}{\proj{k}{l,f}\omega}+ \sum_{t=m+1}^rh_f^{t-m-1} \seminorm{\Wpdf{t,s}{l}{f}}{\omega}\nonumber\\
         \lesssim {}&\seminorm{\Wpdf{m+1,s}{l}{f}}{\omega}+ \sum_{t=m+1}^rh_f^{t-m-1} \seminorm{\Wpdf{t,s}{l}{f}}{\omega},
         \label{eq:bound.dI.2}
  \end{align}
  where the second line follows from the polynomial consistency \cite[Eq.~(3.10)]{Bonaldi.Di-Pietro.ea:23} of $\ded{k}{l,f}$, while the conclusion is obtained by invoking the $W^{m+1,s}$-boundedness of $\proj{k}{l,f}$. The first term in the right-hand side of \eqref{eq:bound.dI.2} corresponds to the term $t=m+1$ in the sum, and this relation therefore gives \eqref{eq:bound.dI}.

\medskip

We now turn to the potential bound \eqref{eq:bound.PI}. Applying \eqref{eq:bound.P} to $\Iec{k}{l,f}(\omega-\proj{k}{l,f}\omega)$ and \eqref{eq:bound.I} to $\omega-\proj{k}{l,f}\omega$ gives
  \[
  \norm{\Lpdf{s}{l+1}{f}}{\Pec{k}{l,f}\Iec{k}{l,f}(\omega-\proj{k}{l,f}\omega)}
         \lesssim \sum_{t=0}^rh_f^t \seminorm{\Wpdf{t,s}{l}{f}}{\omega-\proj{k}{l,f}\omega}.
  \]
The conclusion is then reached as above, but writing \eqref{eq:est.bound.dI.t<=m} with $W^{m,s}$ instead of $W^{m+1,s}$ and $t\le m$ (which relaxes the condition to $m\le k+1$), and \eqref{eq:est.bound.dI.t>m} with $t\ge m+1$, and invoking the polynomial consistency \cite[Eq.\ (3.9)]{Bonaldi.Di-Pietro.ea:23} of $\Pec{k}{l,f}$.
\end{proof}

\section*{Acknowledgements}

Funded by the European Union (ERC Synergy, NEMESIS, project number 101115663).
Views and opinions expressed are however those of the authors only and do not necessarily reflect those of the European Union or the European Research Council Executive Agency. Neither the European Union nor the granting authority can be held responsible for them.
    

\printbibliography

\end{document}